\documentclass[
	english
]{scrartcl}

\usepackage[utf8]{inputenc}

\usepackage{amsmath,amsfonts,amsthm,amssymb}
\usepackage{hyperref}
\hypersetup{colorlinks,citecolor=blue}
\usepackage{cancel}
\usepackage{relsize}
\usepackage{float}
\usepackage{algpseudocode}
\usepackage{changes}
\usepackage{enumerate}
\usepackage{doi}
\usepackage{enumitem}
\setlist[enumerate]{label=(\alph*)}
\usepackage{pdfrender}
\usepackage{booktabs}

\usepackage[figure]{hypcap}
\usepackage{graphicx}
\graphicspath{{./img/}}
\usepackage{subcaption} 

\usepackage{preamble}

\usepackage{marginnote}

\usepackage[capitalize,noabbrev,nameinlink]{cleveref} 
\allowdisplaybreaks

\numberwithin{equation}{section}

\newcommand\norm[1]{\left\Vert#1\right\Vert}

\newcommand\abs[1]{\left\vert#1\right\vert}
\newcommand\dual[2]{\left\langle #1, #2\right\rangle}

\newcommand\N{\mathbb{N}}
\newcommand\R{\mathbb{R}}

\newcommand\LL{\mathcal L}
\newcommand\XX{\mathcal X}
\newcommand\YY{\mathcal Y}
\newcommand\ZZ{\mathcal Z}
\newcommand\UU{\mathcal U}
\newcommand\VV{\mathcal V}
\renewcommand\AA{\mathcal A}
\newcommand\II{\mathcal I}

\newcommand{\proj}{\operatorname{proj}}

\newcommand{\ad}{\textup{ad}}

\DeclareMathOperator*{\stt}{\operatorname{s.t.}}

\DeclareMathAlphabet{\mathpzc}{OT1}{pzc}{m}{it}
\newcommand\oo{\mathpzc{o}}

\theoremstyle{plain}
	\newtheorem{theorem}{Theorem}[section]
	
	\newtheorem{lemma}[theorem]{Lemmas}
	\newtheorem{algorithm}[theorem]{Algorithm}
	\newtheorem{proposition}[theorem]{Proposition}
	\newtheorem{assumption}[theorem]{Assumption}

	\newtheorem{remark}[theorem]{Remark}
	
\crefname{assumption}{Assumption}{Assumptions}
\crefname{figure}{Figure}{Figures}

\AddToHook{env/lemma/begin}{\crefalias{theorem}{lemma}}
\AddToHook{env/proposition/begin}{\crefalias{theorem}{proposition}}
\AddToHook{env/algorithm/begin}{\crefalias{theorem}{algorithm}}
\AddToHook{env/assumption/begin}{\crefalias{theorem}{assumption}}
\AddToHook{env/remark/begin}{\crefalias{theorem}{remark}}
\AddToHook{env/example/begin}{\crefalias{theorem}{example}}
\AddToHook{env/corollary/begin}{\crefalias{theorem}{corollary}}
\AddToHook{env/definition/begin}{\crefalias{theorem}{definition}}

\counterwithin{figure}{section}
\counterwithin{table}{section}

\makeatletter
\long\def\@firstoffiveparen#1#2#3#4#5{\textup{\tagform@{#1}}}
\def\eqref@nolink#1{\textup{\tagform@{\ref*{#1}}}}
\def\eqref@link#1{%
\Hy@safe@activestrue
\expandafter\@setref\csname r@#1\endcsname\@firstoffiveparen{#1}%
\Hy@safe@activesfalse
}
\protected\def\eqref{\@ifstar\eqref@nolink\eqref@link}
\makeatother

\newif\ifshowcomments
\showcommentstrue 

\definecolor{mygreen}{rgb}{0.0,0.7,0.0}
\definecolor{mybrown}{rgb}{0.5,0.5,0.0}

\makeatletter
\newcounter{HALG@line}
\renewcommand{\theHALG@line}{\thealgorithm.\arabic{ALG@line}}
\makeatother

\makeatletter
\newcommand{\algmargin}{\the\ALG@thistlm}
\makeatother
\algnewcommand{\parState}[1]{\State\parbox[t]{\dimexpr\linewidth-\algmargin}{\strut #1\strut}}

\begin{document}

\title{%
	A penalty-type method for relaxed inverse optimal control problems
	}%
\author{%
	Floriane Mefo Kue%
	\footnote{University of Siegen,
		Department of Mathematics,
		57068 Siegen,
		Germany,\\Department of Mathematics, Higher Training College, University of Yaoundé I, P.O. Box 812, Yaoundé, Cameroon,\\
AIMS-Cameroon Research Center, P.O.\ Box 608, Crystal Gardens, Limbe, Cameroon
		\email{floriane.kue@aims-cameroon.org},
		\orcid{0000-0003-0827-0809}
		}
	\and
	Patrick Mehlitz%
	\footnote{%
		University of Marburg,
		Department of Mathematics and Computer Science,
		35032 Marburg,
		Germany,
		\email{mehlitz@uni-marburg.de},
		\orcid{0000-0002-9355-850X}%
		}
	\and
	Thorsten Raasch%
	\footnote{University of Siegen,
		Department of Mathematics,
		57068 Siegen,
		Germany,
		\email{raasch@mathematik.uni-siegen.de},
		\orcid{0000-0002-2607-3115}
	}%
	}

\publishers{}
\maketitle

\begin{abstract}
	This paper is devoted to the introduction and analysis 
	of a penalty-type method for the numerical treatment of 
	a class of bilevel optimization problems arising from inverse optimal control.
	The algorithm is designed to compute stationary points of the associated relaxed value function reformulation.
	This is achieved by determining a sequence of stationary points associated with a sequence of surrogate problems
	where the relaxed value function constraint is penalized, 
	where the updates of upper- and lower-level decision variables are decoupled,
	and where the penalty parameter is enlarged only in those iterations 
	which do not come along with a sufficient improvement of some feasibility measure.
	The resulting method does not comprise any linesearch, the lower-level problem has to be evaluated just once per iteration, 
	and the penalty parameter does not need to be driven to infinity.
	Nevertheless, subsequential convergence results are obtained under reasonable assumptions.
	Numerical experiments, where the relaxation parameter is also driven to zero, visualize effectiveness of the approach.
\end{abstract}

\begin{keywords}	
	Bilevel optimal control,
	inverse optimal control,
	penalty method,
	relaxation method
\end{keywords}

\begin{msc}	
	\mscLink{49K20}, \mscLink{49M20}, \mscLink{49N45}, \mscLink{90C48}
\end{msc}

\section{Introduction}\label{sec:intro}

In bilevel optimization, hierarchical optimization problems with two decision makers are considered 
where the objective function and the feasible set of the so-called upper-level decision maker
(often referred to as the leader) depend on the solution set of a second,
parametric in the leader's variables, so-called lower-level optimization problem (whose decision maker
is called the follower). In the underlying decision process, first, the leader fixes his variables
which are then handed over to the follower who now is in position to solve his problem to global optimality.
The associated solution set is then passed back to the leader who, thus, can evaluate his objective function.
Often, it is assumed that the leader is allowed to choose freely from the follower's solution set,
corresponding to the so-called optimistic approach to bilevel optimization. 
Nowadays, bilevel optimization is one of the most popular research areas in operations research and
mathematical optimization as the underlying model paradigm, exemplary, covers hyperparameter tuning
in machine learning models, see \cite{GrazziPontilSalzoZemkoho2026} for a recent survey,
which can be rated as a special instance of inverse optimization where parameters in optimization problems
have to be reconstructed from given noisy observations.
A detailed introduction to bilevel optimization can be found in the monographs 
\cite{bard2013practical, dempe2002foundations, shimizu2012nondifferentiable}. 

A growing subfield of bilevel optimization is the one of so-called bilevel optimal control
where at least one the involved decision makers faces constraints comprising ordinary or partial
differential equations and, thus, has to solve an optimal control problem,
see \cite{hinze2008optimization,lewis2012optimal,troltzsch2010optimal,troutman2012variational} 
for an introduction to optimal control.
Applications of bilevel optimal control comprise 
gas balancing and transport, see \cite{GoettlichMehlitzSchillinger2024,KalashnikovBenitaMehlitz2015},
scheduling of multi-agent systems, e.g., planes and cranes,
see \cite{fisch2012solution,knauer2010hybrid},
or parameter tracking in models of human locomotion,
see \cite{hatz2012estimating}.
An overview of the topic is presented in the survey book chapter \cite{MehlitzWachsmuth2020}.
The subfield of bilevel optimal control where parameters in optimal control problems have to be reconstructed
from given noisy observations is referred to as inverse optimal control,
and some theory for this particular setting has been developed, e.g., in the recent contributions
\cite{dempe2019solving,DempeHarderMehlitzWachsmuth2022,friedemann2023finding,harder2019optimality,HollerKunischBarnard2018}.

In practice, bilevel optimization problems are generally not stable. 
For some values of the leader's variables, the lower-level problem may possess multiple global minimizers. 
Then small perturbations of the problem data can lead to dramatic changes of the lower-level solution set and, thus,
of the overall bilevel optimization problem.
Exemplary, this has been illustrated by means of several examples in \cite[Section~7.2]{dempe2002foundations}
and is mainly caused by the fact that single-level reformulations of bilevel optimization problems
suffer from an intrinsic lack of regularity, i.e., constraint qualifications of reasonable strength are
violated at each feasible point, see \cite{DempeMehlitz2025} for an overview and suitable references.
To overcome this undesirable behavior of bilevel optimization problems, 
it has been suggested in the literature to equip the leader with the set of $\varepsilon$-optimal solutions
of the lower-level problem for some fixed $\varepsilon>0$,
see \cite{loridan1988approximate, loridan1989new,loridan2006varepsilon}.
The resulting relaxed bilevel optimization problem may be interpreted as a robustifed version of the original one.
Indeed, there is a fairly good chance that standard constraint qualifications hold at the feasible points
of single-level reformulations associated with the relaxed bilevel optimization problem.
This observation also has been exploited for numerical purposes.
Indeed, in \cite{lampariello2020numerically,LamparielloSagratellaSassoStein2023,lin2014solving,ye2023difference},
solution algorithms for bilevel optimization problems based on the relaxed so-called value function reformulation have been
suggested, analyzed, and tested.
Recently, similar work has been done on the base of the relaxed Moreau envelope reformulation,
see \cite{BaiZengZhangZhang2026,GaoYeYinZengZhang2026}.

In this paper, we propose a simple penalty-type solution method for a class of inverse optimal control problems
which builds upon the associated relaxed value function reformulation.
As a model problem, we exploit the inverse optimal control problem considered in 
\cite{dempe2019solving,DempeHarderMehlitzWachsmuth2022,friedemann2023finding}.
In these papers, a global solution method has been suggested and studied which evolves from the value function reformulation
by approximating the lower-level optimal value function therein from above by an iteratively refined piecewise affine function
and decomposing the resulting relaxations into finitely many convex subproblems which can be solved efficiently with the aid
of a semismooth Newton-type method.
The approach considered here is completely different and, from the viewpoint of computation cost, much cheaper 
as we merely aim at the identification of stationary points of the relaxed value function reformulation.
The latter is a reasonable goal as Robinson's constraint qualification holds at each feasible point of the problem of interest.
We penalize the relaxed value function constraint with the aid of a nonsmooth penalty function
in order to exploit the advantage of exact penalization.
Then we iteratively solve the stationarity system of the arising penalized subproblem as the penalty parameter grows
in some but not necessarily all iterations.
In each iteration, the penalty parameter is enlarged only if a certain feasibility measure does not decrease enough,
and we distinguish between the treatment of the leader's and follower's variables
which are updated by a simple projection step and the solution of a square mixed-nonlinear complementarity system, respectively.
This leads to a simple to implement,
linesearch free method which requires merely one evaluation of the lower-level optimal control problem in each iteration
and does not require the penalty parameter to be send to infinity.
We analyze the convergence properties of the suggested algorithm and present results of computational experiments.

The remainder of the paper is organized as follows.
In \cref{sec:notation}, we introduce the notation which will be used throughout. 
\cref{sec:problem_statement} presents the precise problem statement and recalls some preliminary results on the model problem taken from \cite{dempe2019solving}. 
\cref{sec:algorithm} is dedicated to the introduction of the announced penalty-type solution algorithm for the model problem and also presents some convergence results.
Details on its implementation and results of numerical experiments can be found in \cref{sec:experiments}.
In \cref{sec:outro}, we close the paper with some final remarks and suggestions for future research. 

\section{Notation}\label{sec:notation}

In this paper, we equip $\R^n$, the space of all real vectors with $n\in\mathbb{N}$ components, 
with the Euclidean inner product $\dual{\cdot}{\cdot}$ and the Euclidean norm $\norm{\cdot}$. 
Given any two vectors $x,x'\in\R^n$, 
we exploit $x\perp x'$ in order to represent the orthogonality relation $\dual{x}{x'}=0$.
For an arbitrary Banach space $\XX$, we denote its norm by $\norm{\cdot}_{\XX}$. 
The symbol $\XX^*$ stands for the (topological) dual space of $\XX$, and
$\dual{ \cdot}{\cdot}_\XX \colon \XX^*\times \XX \to \mathbb{R}$
represents the associated dual pairing.
Convergence of a sequence $\{x_k\}_{k\in\N}\subset\XX$ to $x\in\XX$ will be denoted by $x_k\to x$,
while the weak convergence of $\{x_k\}_{k\in\N}$ to $x$ is expressed via $x_k\rightharpoonup x$.
Whenever $\XX$ is reflexive, we identify the bidual space $\XX^{**}$ with $\XX$.
If $\XX$ is a Hilbert space, we may identify $\XX^*$ with $\XX$ via the Riesz isomorphism,
but we will clearly indicated in the paper whenever we make use of this identification.
Throughout, all Banach and Hilbert spaces are assumed to be real.

Let $\XX$ be a Hilbert space, fix a nonempty, closed, convex set $A\subset \XX$, and pick $x\in\XX$.
Then $\proj_A(x)$ denotes the projection of $x$ onto $A$.
Furthermore, for $\bar x\in A$,
\[
	R_A(\bar x)
	:=
	\bigcup\limits_{s\geq 0}s(A-\{\bar x\}), \qquad
	N_A(\bar x)
	:=
	\left\{
	x^* \in \XX^* \,\middle|\,
	\forall x\in A\colon\,\dual{ x^*}{ x - \bar x }_\XX \le 0
	\right\}
\]
denote the radial and normal cone to $A$ at $\bar x$, respectively.
Whenever $\XX^*$ is identified with $\XX$ via the Riesz isomorphism, we have the equivalence
\begin{equation}\label{eq:char_proj}
	\bar x = \proj_A(x)\quad\Longleftrightarrow\quad x-\bar x\in N_A(\bar x).
\end{equation}
For the purpose of completeness, we set $R_A(\tilde x):=\emptyset$ and $N_A(\tilde x):=\emptyset$ whenever $\tilde x\notin A$.
Given sequences $\{x_k\}_{k\in\N}\subset A$ and $\{x_k^*\}_{k\in\N}\subset \XX^*$ such that $x_k^*\in N_A(x_k)$ holds for all $k\in\N$
while the convergences $x_k\to \bar x$ and $x_k^*\rightharpoonup x^*$ are valid for some $\bar x\in A$ and $x^*\in \XX^*$,
it easily follows from the definition of the normal cone that $x^*\in N_A(\bar x)$.
This property will be referred to as the closedness of the normal cone mapping $N_A(\cdot)$ throughout.

For yet another Hilbert space $\YY$, we denote by $\LL(\XX,\YY)$ the Banach space of all linear continuous operators mapping from $\XX$ to $\YY$.  
The adjoint of an operator $F \in \LL(\XX,\YY)$ is represented by $F^*\in\LL(\YY^*,\XX^*)$.
Whenever $\XX\subset\YY$ and the identity is continuous as a mapping from $\XX$ to $\YY$, we write $\XX\hookrightarrow\YY$.
For a Fr\'{e}chet differentiable function $G\colon \XX\to \YY$ and $\bar x\in \XX$, $G'(\bar x)\in\LL(\XX,\YY)$ denotes the Fr\'{e}chet derivative of $G$ at $\bar x$.
Whenever $g\colon \XX\to\R$ is Fr\'{e}chet differentiable at $\bar x\in\XX$, with a slight abuse of notation,
we identify $g'(\bar x)$ with its gradient $\nabla g(\bar x):=g'(\bar x)^*1$.
Partial Fr\'{e}chet derivatives with respect to certain blocks of variables are denoted in the canonical way.

Consider a set $D:=\{x\in \Omega\,|\,G(x)\in C\}$, where $G\colon \XX\to \YY$ is a continuously Fr\'{e}chet differentiable mapping between Hilbert spaces $\XX$ and $\YY$,
$\Omega\subset\XX$ is nonempty, closed, and convex, and $C\subset \YY$ is nonempty, closed, and convex.
We say that Robinson's constraint qualification, see \cite{Robinson1976,ZoweKurcyusz1979}, holds at $\bar x\in D$
whenever the condition
\[
	G'(\bar x) R_\Omega(\bar x) - R_C(G(\bar x)) = \YY
\]
is valid. This qualification condition is an essential tool to verify existence of Lagrange multipliers
for optimization problems stated in Banach spaces.

For a bounded domain $\Omega \subset \mathbb{R}^d$, we denote by $L^2(\Omega)$ the Lebesgue space of measurable and square-integrable functions,
and we identify $L^2(\Omega)^*$ with $L^2(\Omega)$.  
Moreover, $H_0^1(\Omega)$ represents the closure of $C_0^\infty(\Omega)$, the space of all infinitely many times differentiable functions with compact support in $\Omega$, 
with respect to the standard $H^1$-Sobolev norm. Its dual space is denoted by $H^{-1}(\Omega) := H_0^1(\Omega)^*$.
We note that $H^1_0(\Omega)\hookrightarrow L^2(\Omega)\hookrightarrow H^{-1}(\Omega)$.  

\section{Problem statement and properties}\label{sec:problem_statement}

To start, let us consider the (parametric in $x\in\R^n$) abstract 
optimal control problem
\begin{equation}\label{Lower_level_problem}\tag{LL$(x)$}
	\begin{aligned}
 		&\min\limits_{y,u}&  &\frac{1}{2}\|y-Ex\|^2_{\VV}+\frac{\sigma}{2}\|u\|^2_{\UU}& \\
 		&\stt& &A y=B u,\,u \in U_{\ad},&
   \end{aligned}
\end{equation} 
whose variables are the state $y$ and the control $u$ 
which have to be chosen from suitable function spaces $\YY$ and $\UU$, respectively,
such that the state equation $Ay=Bu$ and certain control constraints $u\in U_\ad$
hold. Typically, $U_\ad$ is a box induced by upper and lower bounds on the control.
The objective function in \eqref{Lower_level_problem} enforces the state to
be, in an observation space $\VV$ such that $\YY\hookrightarrow\VV$, 
as close as possible to a desired state $Ex$, where $E$ is a linear continuous operator,
while the control effort is reasonably small.

We aim to identify the  true value of the desired state $Ex$ from given (potentially noisy) measurements
in a superordinate optimization problem
\begin{equation}\label{eq:IOC}
	\begin{aligned}
		&\min\limits_{x,y,u}& 	& F(x, y, u)& \\
 		&\stt& 					& x \in X_{\ad},\, (y,u) \text{ solves \eqref{Lower_level_problem},} &
	\end{aligned}
\end{equation}
which is, thus, an inverse optimal control problem.
Here, the function $F\colon\R^n\times\YY\times\UU\to\R$ represents a measure 
which compares the true solutions of \eqref{Lower_level_problem} with some observed data,
e.g.,
\[
	F(x,y,u) := \frac12\norm{y-y_\textup{o}}_{\VV}^2 + \frac12\norm{u-u_\textup{o}}_{\UU}^2
\]
for given $(y_\textup{o},u_\textup{o})\in\YY\times\UU$,
but we emphasize that other choices for $F$ are also covered by the analysis in this paper.
Candidate parameters are taken from the set $X_\ad$.
A closely related model problem has been investigated in \cite{dempe2019solving,DempeHarderMehlitzWachsmuth2022,friedemann2023finding}.
Recall that, in the terminology of bilevel optimization,
the inner problem \eqref{Lower_level_problem} is referred to as the lower-level problem
while the outer problem \eqref{eq:IOC} is called the upper-level problem.

Let $\varphi\colon\R^n\to\R\cup\{-\infty,\infty\}$ be the optimal value function associated with \eqref{Lower_level_problem},
i.e., it assigns to each $x\in\R^n$ the optimal objective function value of \eqref{Lower_level_problem}.
Then \eqref{eq:IOC} is, obviously, equivalent to the single-level problem
\begin{equation}\label{eq:IOC_ref}\tag{IOC}
	\begin{aligned}
 		&\min\limits_{x,y,u}& 	& F(x, y, u)& \\
 		&\stt& 		& x \in X_{\ad},\,A y=B u,\,u \in U_{\ad},& \\
        &&          & f(x, y, u)-\varphi(x) \leq 0,&
   \end{aligned}
\end{equation}
where we used $f\colon\R^n\times\YY\times\UU\to\R$ in order to abbreviate the objective function of the 
lower-level problem \eqref{Lower_level_problem} given by
\begin{equation}\label{eq:def_f}
	f(x,y,u) := \frac12\norm{y-Ex}_{\VV}^2 + \frac{\sigma}{2}\norm{u}_{\UU}^2.
\end{equation}
Despite being equivalent to \eqref{eq:IOC}, \eqref{eq:IOC_ref} comes along with several disadvantages.
First and foremost, the function $\varphi$ is only implicitly known,
and evaluation of its function values and (generalized) derivatives might be costly.
Second, as we will recall below, Robinson's constraint qualification fails to hold at all feasible points of \eqref{eq:IOC_ref}.

In order to overcome this drawback, we consider 
the relaxed inverse optimal control problem
\begin{equation}\label{main_problem}\tag{IOC$_\varepsilon$}
	\begin{aligned}
 		&\min\limits_{x,y,u}& 	& F(x, y, u)& \\
 		&\stt& 		& x \in X_{\ad},\,A y=B u,\,u \in U_{\ad},& \\
        &&                 		& f(x, y, u)-\varphi(x) \leq \varepsilon,&
   \end{aligned}
\end{equation}
where $\varepsilon > 0$ is a relaxation parameter
(we note that choosing $\varepsilon:=0$ in \eqref{main_problem},
which we omit here, recovers problem \eqref{eq:IOC_ref}).
Indeed, in contrast to \eqref{eq:IOC_ref}, \eqref{main_problem} is a rather regular problem
as Robinson's constraint qualification holds at each feasible point.

Throughout the paper, we will make use of the assumptions summarized below.

\begin{assumption}\label{ass:setting}
	Let $\YY$, $\UU$, and $\VV$ be Hilbert spaces such that $\YY\hookrightarrow\VV$,
	and $\UU^*$ is identified with $\UU$ while $\VV^*$ is identified with $\VV$.
	We assume that
	$F\colon\mathbb{R}^n \times \YY \times \UU \to \mathbb{R}$
	is continuously Fr\'{e}chet differentiable and convex. 
	The set $X_{\ad} \subset \mathbb{R}^n$ is nonempty, convex, and compact.
	Furthermore, $A \in \mathcal{L}(\YY,\YY^*)$ is an isomorphism,
	and $B \in \mathcal{L}(\UU,\YY^*)$ as well as $E \in \mathcal{L}(\R^n,\VV)$ are two other linear continuous operators. 
	The admissible control set $U_{\ad} \subset \UU$ is assumed to be nonempty, closed,
 	convex, and bounded. Furthermore, $\sigma > 0$ is a regularization parameter.
 	The function $f$ is given as in \eqref{eq:def_f}.
 	Finally, we fix $\varepsilon>0$.
\end{assumption}

From now on \cref{ass:setting} will be treated as standing without further notice.
 
Throughout the paper, let $S:=A^{-1}B$ denote the (linear continuous) solution operator 
of the linear state equation $Ay=Bu$. Then, for each $x\in\R^n$, $y=Su$ holds for each feasible pair $(y,u)\in\YY\times\UU$
of \eqref{Lower_level_problem}.
Furthermore, we suppress the linear continuous operators 
representing the natural embedding $\YY\hookrightarrow\VV$ and the associated adjoint embedding $\VV\hookrightarrow\YY^*$
for simplicity of notation.

We also note that the lower-level objective function $f$ defined in \eqref{eq:def_f} is continuously Fr\'{e}chet differentiable and convex with partial Fr\'{e}chet derivatives
\begin{align*}
	f'_x(x,y,u)
	=
	E^*(Ex-y),
	\qquad
	f'_y(x,y,u)
	=
	y - Ex,
	\qquad
	f'_u(x,y,u)
	=
	\sigma\,u.
\end{align*}

In the upcoming subsections, 
we collect some results which have been obtained in \cite{dempe2019solving} already
and will become handy later on.

\subsection{The lower-level problem}

To start, let us recall some properties of the lower-level problem  \eqref{Lower_level_problem}.
Our first proposition, which recapitulates \cite[Lemma~4.1]{dempe2019solving}, comments on the
solution behavior of \eqref{Lower_level_problem}.

\begin{proposition}\label{Prop_llp_1}
 	For each $x\in\R^n$, problem \eqref{Lower_level_problem} is well-defined
 	and admits a uniquely determined global minimizer $(y(x),u(x))\in\YY\times\UU$.
 	Moreover, the solution mappings
	\[
 		\R^n\ni x\mapsto y(x)\in\YY,
 		\qquad
  		\R^n\ni x\mapsto u(x)\in\UU
	\]
	are Lipschitz continuous.
\end{proposition}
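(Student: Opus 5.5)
We plan to eliminate the state by means of the solution operator $S=A^{-1}B$ and work with the reduced problem in the control alone. For fixed $x\in\R^n$, the feasible pairs of \eqref{Lower_level_problem} are exactly $(Su,u)$ with $u\in U_\ad$. The problem is therefore equivalent to minimizing
\[
	j_x(u):=\frac12\norm{Su-Ex}_{\VV}^2+\frac{\sigma}{2}\norm{u}_{\UU}^2
\]
over $U_\ad$, and we set $y(x):=Su(x)$. The map $j_x$ is continuous and, because $\sigma>0$, $\sigma$-strongly convex on the Hilbert space $\UU$. The set $U_\ad$ is nonempty, closed, and convex. Hence $j_x$ is coercive and weakly lower semicontinuous, so a minimizer exists by the direct method (boundedness of $U_\ad$ is not even needed). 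Strong convexity gives uniqueness of the minimizer, and $y(x)$ is then also unique. In particular, $\varphi(x)$ is finite for every $x\in\R^n$, which settles well-definedness.

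For Lipschitz continuity, we use the first-order optimality condition, which is a variational inequality:
\[
	\dual{S^*(Su(x)-Ex)+\sigma u(x)}{v-u(x)}_{\UU}\geq 0\qquad\forall v\in U_\ad .
\]
Here $S^*\in\LL(\VV,\UU)$, using the embedding $\YY\hookrightarrow\VV$ and the identifications of $\UU^*$ with $\UU$ and of $\VV^*$ with $\VV$. Take two parameters $x_1,x_2$ with solutions $u_1,u_2$. Test the inequality for $x_1$ with $v=u_2$, test the one for $x_2$ with $v=u_1$, and add the two. This gives
\[
	\norm{S(u_1-u_2)}_{\VV}^2+\sigma\norm{u_1-u_2}_{\UU}^2\leq \dual{E(x_1-x_2)}{S(u_1-u_2)}_{\VV}.
\]
We drop the first term on the left and bound the right-hand side by $\norm{E}\,\norm{S}\,\norm{x_1-x_2}\,\norm{u_1-u_2}_{\UU}$ via Cauchy--Schwarz. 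This yields $\norm{u_1-u_2}_{\UU}\leq \sigma^{-1}\norm{E}\norm{S}\norm{x_1-x_2}$.

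Lipschitz continuity of $x\mapsto y(x)=Su(x)$ into $\YY$ then follows directly from the boundedness of $S$, which holds because $A$ is an isomorphism and $B$ is continuous. The only step needing care is the bookkeeping of adjoints and embeddings, so that the cross term is a genuine $\VV$-inner product. Once that is set up, the argument is a routine strong-monotonicity estimate, and we do not expect a real obstacle.
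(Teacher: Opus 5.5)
Your proof is correct. The paper gives no argument of its own here and only cites \cite[Lemma~4.1]{dempe2019solving}, but your approach (eliminate the state via $S$, use $\sigma$-strong convexity of the reduced functional, add the two variational inequalities) is exactly the argument the paper carries out for the penalized problem in the proof of \cref{prop_parametric_penalized_1}. Running that computation with $F\equiv 0$ and $\alpha=1$ gives your estimate with the constant $\sigma^{-1}\norm{S^*E}$ from \eqref{eq:explicit_Lipschitz_estimate}; you also get this constant if you write the cross term as $\dual{S^*E(x_1-x_2)}{u_1-u_2}_{\UU}$ instead of bounding it by $\norm{E}\norm{S}$.
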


Let us note that the proof of \cite[Lemma~4.1]{dempe2019solving} allows to find
explicit Lipschitz constants of $x\mapsto y(x)$ and $x\mapsto u(x)$ in terms of
initial problem data.

\cref{Prop_llp_1} implies that the optimal value function $\varphi$ of \eqref{Lower_level_problem}
enjoys the representation
\[
	\varphi(x) 
	= 
	f(x,y(x),u(x)) 
	= 
	\frac{1}{2}\norm{y(x) - Ex}^2_{\VV} + \frac\sigma2\norm{u(x)}^2_{\UU}
\]
for all $x\in\R^n$
and is real-valued and locally Lipschitz continuous.
Here, we are also concerned with differentiability properties of $\varphi$.
For this purpose, let us recall \cite[Lemma~4.3]{dempe2019solving}.

\begin{proposition}\label{prop:smooth_value_function} 
	The optimal value function $\varphi$ of \eqref{Lower_level_problem} is continuously Fr\'{e}chet differentiable and convex. 
	Given $x \in \mathbb{R}^n$, 
	the associated Fr\'{e}chet derivative is given by
	\[
		\varphi'(x) = f'_x(x,y(x),u(x)) = E^*(Ex - y(x)),
	\]
	where $(y(x),u(x))\in\YY\times\UU$ is the uniquely determined global minimizer of \eqref{Lower_level_problem},
	see \cref{Prop_llp_1}.
\end{proposition}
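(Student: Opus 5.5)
The plan is to reduce \eqref{Lower_level_problem} to a control-only problem and study it as a parametric convex problem. Using $y=Su$, the value function reads
\[
	\varphi(x)=\min_{u\in U_\ad} j(x,u),
	\qquad
	j(x,u):=\tfrac12\norm{Su-Ex}_\VV^2+\tfrac\sigma2\norm{u}_\UU^2 .
\]
The function $j$ is a quadratic form in the pair $(x,u)$, namely $\tfrac12\norm{Su-Ex}_\VV^2+\tfrac\sigma2\norm{u}_\UU^2$. This form is nonnegative, so $j$ is jointly convex on $\R^n\times\UU$.

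Convexity of $\varphi$ then follows from the standard fact that partial minimization of a jointly convex function over a convex set yields a convex function. Concretely, fix $x_1,x_2\in\R^n$ and $\lambda\in[0,1]$. The convex combination $\lambda u(x_1)+(1-\lambda)u(x_2)$ lies in $U_\ad$. Joint convexity of $j$ then gives $\varphi(\lambda x_1+(1-\lambda)x_2)\le\lambda\varphi(x_1)+(1-\lambda)\varphi(x_2)$. Real-valuedness is supplied by \cref{Prop_llp_1}.

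For differentiability I would use a Danskin-type sandwich argument built on the unique minimizer from \cref{Prop_llp_1}. Fix $x$ and $h\in\R^n$.

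\begin{itemize}
\item \emph{Upper bound.} Using $u(x)$ as a feasible competitor at $x+h$ gives
\[
	\varphi(x+h)-\varphi(x)\le j(x+h,u(x))-j(x,u(x))
	=\dual{E^*(Ex-Su(x))}{h}+\tfrac12\norm{Eh}_\VV^2 .
\]
\item \emph{Lower bound.} Using $u(x+h)$ as a competitor at $x$ gives
\[
	\varphi(x+h)-\varphi(x)\ge j(x+h,u(x+h))-j(x,u(x+h))
	=\dual{E^*(Ex-Su(x+h))}{h}+\tfrac12\norm{Eh}_\VV^2 .
\]
\end{itemize}

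Subtracting the candidate derivative $\dual{E^*(Ex-y(x))}{h}$, with $y(x)=Su(x)$, leaves a remainder bounded by
\[
	\norm{E}\,\norm{S}\,\norm{u(x+h)-u(x)}_\UU\,\norm{h}+\tfrac12\norm{E}^2\norm{h}^2 .
\]
By the Lipschitz continuity of $x\mapsto u(x)$ from \cref{Prop_llp_1}, this remainder is $O(\norm{h}^2)$. Hence $\varphi$ is Fréchet differentiable with
\[
	\varphi'(x)=E^*(Ex-y(x))=f'_x(x,y(x),u(x)).
\]

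Continuity of $\varphi'$ follows immediately, since $x\mapsto y(x)$ is Lipschitz by \cref{Prop_llp_1} and $E,E^*$ are bounded.

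The only delicate step is the lower bound, where one must use the minimizer at the perturbed point. It is handled entirely by the Lipschitz estimate, so I expect no real obstacle once \cref{Prop_llp_1} is available. Convexity of $\varphi$ alone would also yield the derivative formula as the unique subgradient, but the direct estimate is simpler.
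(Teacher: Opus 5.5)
Your proof is correct. Note that the paper does not prove this statement itself; it recalls it from \cite[Lemma~4.3]{dempe2019solving}, so the only route in the paper is that citation. Your argument is a self-contained replacement that needs nothing beyond \cref{Prop_llp_1}:
\begin{itemize}
\item Convexity of $\varphi$ follows from joint convexity of the reduced quadratic $j$ together with feasibility of convex combinations in $U_\ad$.
\item The two-sided comparison pins down $\varphi'(x)=E^*(Ex-y(x))$ with an explicit remainder. You use $u(x)$ as competitor at $x+h$, and $u(x+h)$ as competitor at $x$.
\end{itemize}
What the citation buys the paper is brevity. What your route buys is transparency, plus a quantitative remainder.

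Two small comments. First, in your remainder bound, $\norm{S}$ must be read as the norm of $S$ viewed as a map into $\VV$, i.e., composed with the embedding $\YY\hookrightarrow\VV$. Also, mere continuity of $u(\cdot)$ already gives Fr\'{e}chet differentiability. The Lipschitz property is what yields the $O(\norm{h}^2)$ remainder and Lipschitz continuity of $\varphi'$.

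Second, your closing remark should say ``convexity together with the upper bound'' rather than ``convexity alone''. In that form it is true and gives a slightly leaner proof. For every $s\in\partial\varphi(x)$, replace $h$ by $\tau h$ with $\tau>0$ in your upper estimate, compare with the subgradient inequality, and let $\tau\downarrow 0$. This gives $\dual{s}{h}\le\dual{E^*(Ex-y(x))}{h}$ for all $h\in\R^n$, hence $\partial\varphi(x)=\{E^*(Ex-y(x))\}$. A finite convex function on $\R^n$ with singleton subdifferential is differentiable there. Moreover, a convex function differentiable on all of $\R^n$ is automatically continuously differentiable. So the lower bound and the Lipschitz estimate are dispensable for the qualitative statement, although your direct two-sided argument is perfectly fine as written.
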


\subsection{The upper-level problem}

First, let us mention that \eqref{eq:IOC_ref} and \eqref{main_problem} possess global minimizers,
see \cite[Corollary~4.1, Lemma~5.3]{dempe2019solving}. 
\begin{proposition}\label{prop:existence} 
	\hfill
	\begin{enumerate}
	\item Problem \eqref{eq:IOC_ref} possesses a global minimizer
	\item Problem \eqref{main_problem} possesses a global minimizer.
	\end{enumerate}
\end{proposition}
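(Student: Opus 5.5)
Both assertions will follow from the direct method of the calculus of variations, applied to a minimizing sequence.

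\textbf{Setup.} Using $S=A^{-1}B$, the state is eliminated via $y=Su$. The feasible sets of \eqref{eq:IOC_ref} and \eqref{main_problem} are then
\[
	M_\delta:=\{(x,Su)\,|\,x\in X_\ad,\ u\in U_\ad,\ f(x,Su,u)-\varphi(x)\le\delta\}
\]
for $\delta=0$ and $\delta=\varepsilon$, respectively.

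\textbf{Nonemptiness.} By \cref{Prop_llp_1}, the point $(x,y(x),u(x))$ belongs to $M_0\subset M_\varepsilon$ for any $x\in X_\ad$. Hence both feasible sets are nonempty.

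\textbf{Boundedness of a minimizing sequence.} Let $\{(x_k,y_k,u_k)\}_{k\in\N}$ be a minimizing sequence. Since $F$ is convex and continuous, it is bounded below by a continuous affine functional, so the infimum is either finite or $-\infty$; the latter will be excluded at the end. Because $X_\ad$ is compact and $U_\ad$ is bounded, we can pass to a subsequence with $x_k\to\bar x\in X_\ad$ and $u_k\rightharpoonup\bar u$ in $\UU$. Since $U_\ad$ is closed and convex, it is weakly closed, so $\bar u\in U_\ad$. As $S$ is linear and continuous, $y_k=Su_k\rightharpoonup S\bar u=:\bar y$ in $\YY$, and hence also weakly in $\VV$.

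\textbf{Closedness of the value function constraint.} This is the main step. The map $(x,y,u)\mapsto f(x,y,u)-\varphi(x)$ must be weakly lower semicontinuous along this sequence. The term $f$ is convex and continuous in $(y,u)$, so it is weakly lower semicontinuous. Since $x_k\to\bar x$ strongly in finite dimensions, $Ex_k\to E\bar x$, and therefore
\[
	\liminf_{k\to\infty} f(x_k,y_k,u_k)\ge f(\bar x,\bar y,\bar u).
\]
By \cref{prop:smooth_value_function}, $\varphi$ is continuous, so $\varphi(x_k)\to\varphi(\bar x)$. Consequently
\[
	f(\bar x,\bar y,\bar u)-\varphi(\bar x)\le\liminf_{k\to\infty}\bigl(f(x_k,y_k,u_k)-\varphi(x_k)\bigr)\le\delta,
\]
and $(\bar x,\bar y,\bar u)$ is feasible. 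The point of the argument is that only weak convergence is available in $(y,u)$; the difficulty is avoided because $\varphi$ depends only on the finite-dimensional variable $x$, where the convergence is strong.

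\textbf{Conclusion.} $F$ is convex and continuously Fr\'{e}chet differentiable, hence weakly lower semicontinuous on $\R^n\times\YY\times\UU$. It follows that
\[
	F(\bar x,\bar y,\bar u)\le\liminf_{k\to\infty}F(x_k,y_k,u_k).
\]
In particular, the infimum is finite, since $F(\bar x,\bar y,\bar u)$ is a real number. Therefore $(\bar x,\bar y,\bar u)$ is a global minimizer of the respective problem. The same argument proves both (a), with $\delta=0$, and (b), with $\delta=\varepsilon$.

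As a remark, for (a) the constraint with $\delta=0$ forces $(\bar y,\bar u)=(y(\bar x),u(\bar x))$ by uniqueness in \cref{Prop_llp_1}. Hence \eqref{eq:IOC_ref} alternatively reduces to minimizing the continuous function $x\mapsto F(x,y(x),u(x))$ over the compact set $X_\ad$, which gives a second proof of (a).
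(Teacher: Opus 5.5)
Your proof is correct. The paper gives no argument of its own for this proposition; it quotes it from \cite[Corollary~4.1, Lemma~5.3]{dempe2019solving}. Your direct-method argument is therefore a valid self-contained replacement:
\begin{itemize}
\item compactness of $X_\ad$ gives strong convergence of $x_k$;
\item boundedness and weak closedness of $U_\ad$ give weak convergence of $u_k$, and hence of $y_k=Su_k$;
\item the value function constraint survives the limit because $f$ is weakly lower semicontinuous, while $\varphi$ depends only on the finite-dimensional variable and is continuous;
\item finiteness of the infimum then follows from weak lower semicontinuity of $F$.
\end{itemize}

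Your alternative proof of (a) is also sound. It uses the uniqueness in \cref{Prop_llp_1} to reduce \eqref{eq:IOC_ref} to minimizing the continuous map $x\mapsto F(x,y(x),u(x))$ over the compact set $X_\ad$, so no weak convergence is needed at all. It does not extend directly to (b), where the lower-level variables are no longer determined by $x$.

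Two small presentational points:
\begin{itemize}
\item The feasible set should read $\{(x,Su,u)\,|\,\dots\}$.
\item The step ``$f$ is weakly lower semicontinuous in $(y,u)$ and $x_k\to\bar x$'' needs a little more, since separate lower semicontinuity does not in general give joint lower semicontinuity along $(x_k,y_k,u_k)$. It is best justified by joint convexity and continuity of $f$ on $\R^n\times\YY\times\UU$, or directly by $y_k-Ex_k\rightharpoonup\bar y-E\bar x$ in $\VV$.
\end{itemize}
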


We also would like to note that, according to \cite[Theorem~5.1]{dempe2019solving}, taking the limit in a family $(x_\varepsilon,y_\varepsilon,u_\varepsilon)$
of global minimizers associated with \eqref{main_problem} as $\varepsilon$ tends to zero recovers global minimizers of \eqref{eq:IOC_ref}
and, thus, \eqref{eq:IOC}. In this regard, we may refer to \eqref{main_problem} as a robustification of \eqref{eq:IOC_ref}
as already mentioned in \cref{sec:intro}.

In \cite[Lemmas~5.1 and~5.2]{dempe2019solving}, regularity of problems \eqref{eq:IOC_ref} and \eqref{main_problem} has been studied.
Below, we summarize these results which are essential for our further investigations.

\begin{proposition}\label{prop:RCQ}
	\hfill
	\begin{enumerate}
		\item Robinson's constraint qualification is violated at each feasible point of \eqref{eq:IOC_ref}.
		\item Robinson's constraint qualification is valid at each feasible point of \eqref{main_problem}.
	\end{enumerate}
\end{proposition}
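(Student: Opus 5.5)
The natural setting is the abstract form of Robinson's constraint qualification from \cref{sec:notation}. We take $\XX:=\R^n\times\YY\times\UU$ and $\Omega:=X_\ad\times\YY\times U_\ad$. The constraints are collected in
\[
	G(x,y,u):=\bigl(Ay-Bu,\; f(x,y,u)-\varphi(x)\bigr)\in\YY^*\times\R,
	\qquad C:=\{0\}\times(-\infty,c],
\]
with $c:=0$ for \eqref{eq:IOC_ref} and $c:=\varepsilon$ for \eqref{main_problem}. By \cref{prop:smooth_value_function}, $G$ is continuously Fr\'{e}chet differentiable. Its derivative at $(\bar x,\bar y,\bar u)$ is
\[
	(\delta x,\delta y,\delta u)\mapsto\bigl(A\delta y-B\delta u,\;\dual{E^*(E\bar x-\bar y)-E^*(E\bar x-y(\bar x))}{\delta x}+\dual{\bar y-E\bar x}{\delta y}_{\VV}+\sigma\dual{\bar u}{\delta u}_{\UU}\bigr).
\]

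For part (a), the plan is a separation argument. At a feasible point of \eqref{eq:IOC_ref} we have $f(\bar x,\bar y,\bar u)\le\varphi(\bar x)$. By \cref{Prop_llp_1} this forces $(\bar y,\bar u)=(y(\bar x),u(\bar x))$, so the $x$-component of the derivative vanishes. Hence the second component of $G'$ reduces to the directional derivative of the lower-level objective at its minimizer.

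By first-order optimality of the convex lower-level problem, this quantity is nonnegative for every $\delta y=S\delta u$ with $\delta u\in R_{U_\ad}(\bar u)$. Moreover, $R_C(G(\bar x,\bar y,\bar u))=\{0\}\times(-\infty,0]$ because the inequality constraint is active. I would then show that a target $(0,-1)$ is not attainable. Attaining it requires $A\delta y=B\delta u$, i.e.\ $\delta y=S\delta u$, and then a second component that is $\le -1$ up to an element of the nonpositive cone. This contradicts nonnegativity, so $(0,-1)\notin G'R_\Omega-R_C$ and the qualification fails.

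For part (b), take a feasible point of \eqref{main_problem} and split into two cases.

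\emph{Inactive case}, $f-\varphi<\varepsilon$. Here $R_C$ contains $\{0\}\times\R$, so it suffices to attain arbitrary $\eta\in\YY^*$ in the first component. This is done by $\delta u=0$, $\delta y=A^{-1}\eta$, using that $A$ is an isomorphism.

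\emph{Active case}, $f-\varphi=\varepsilon>0$. Then $(\bar y,\bar u)$ is not the lower-level minimizer. I would use the feasible direction toward the minimizer, $\delta u:=u(\bar x)-\bar u\in R_{U_\ad}(\bar u)$ and $\delta y:=S\delta u=y(\bar x)-\bar y$, together with $\delta x:=0$. By convexity of $(y,u)\mapsto f(\bar x,y,u)$, the second component satisfies
\[
	f'_{(y,u)}\,(\delta y,\delta u)\le f(\bar x,y(\bar x),u(\bar x))-f(\bar x,\bar y,\bar u)=-\varepsilon<0,
\]
while the first component is $0$.

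Given an arbitrary target $(\eta,t)$, set $\delta y_0:=A^{-1}\eta$ to produce $(\eta,t_0)$ for some $t_0$. Then add $s\ge0$ times the above direction, which shifts the scalar component by $-s\varepsilon'$ with $\varepsilon'>0$, and absorb the rest via the cone $\{0\}\times(-\infty,0]$. Since $\YY$ is unconstrained, $\delta y_0$ is admissible, and radial cones are convex cones, so sums remain in $G'R_\Omega$. This is the one step needing care: the sign bookkeeping and the fact that strict decrease is available exactly because $\varepsilon>0$. Choosing $s$ large enough yields every $t$, which establishes Robinson's condition.
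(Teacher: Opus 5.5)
Your proof is correct. The paper does not prove this proposition itself; it only recalls it from \cite[Lemmas~5.1 and~5.2]{dempe2019solving}, so your argument serves as a self-contained verification rather than an alternative to an in-paper proof.

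Your constraint splitting is the right one: $X_\ad$ and $U_\ad$ go into the abstract set, while the state equation and the value function constraint go into $G$. This matters because Robinson's condition depends on the chosen representation, and this splitting reproduces exactly the multiplier structure $(\lambda^x,\lambda^p,\lambda^u,\lambda^\textup{vf})$ of \cref{prop:NOC_IOCeps}.

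In part (a), the key observation is sound: every feasible point of \eqref{eq:IOC_ref} is a global minimizer of $f-\varphi$ over the remaining constraints. Hence the $x$-part of the linearization vanishes, and the $(y,u)$-part is nonnegative on the kernel of the linearized state equation intersected with the radial cone. So $(0,-1)$ cannot be attained.

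Part (b) is a Slater-type argument carried out in the $(y,u)$-variables alone, with $\delta x=0$. The point $(y(\bar x),u(\bar x))$ plays the role of the strictly feasible point precisely because $\varepsilon>0$. The convexity estimate bounds the slope by $-\varepsilon$, and the convex-cone property of $R_\Omega(\bar x,\bar y,\bar u)$ lets you add $(0,A^{-1}\eta,0)$. Together these close the active case; the inactive case is immediate from surjectivity of $A$.
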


\section{A penalty-type solution algorithm}\label{sec:algorithm}

In this section, 
we develop an algorithm for the computational solution of problem \eqref{main_problem}. 
The underlying penalty approach is motivated in \cref{sec:penalty_approach}.
A special parametric penalized surrogate problem is investigated in \cref{sec:penalized_subproblem}.
In \cref{sec:algo}, we state and discuss the suggested method,
and \cref{sec:ana} presents the associated convergence analysis.

\subsection{A penalization approach}\label{sec:penalty_approach}

To start the subsection,
let us elaborate on the stationarity conditions of \eqref{main_problem}.

\begin{proposition}\label{prop:NOC_IOCeps}
 	Let $(\bar x,\bar y,\bar u)  \in \R^n\times\YY\times \UU $ be a local minimizer of \eqref{main_problem}.
 	Then there exist multipliers $(\lambda^x,\lambda^p,\lambda^u,\lambda^\textup{vf})\in\R^n\times\YY\times\UU\times\R$
	which satisfy the following conditions:
	\begin{subequations}
		\label{eq:optimality-system}
		\begin{align}
			\label{eq:optimality_system_x}
				0
				&=
				F'_x(\bar{x}, \bar{y},  \bar u) + \lambda^x+\lambda^\textup{vf} \left[ f_x'(\bar{x},\bar{y},\bar{u}) - E^{*}\left( E\bar x - y(\bar x)\right) \right],
				\\
			\label{eq:optimality_system_y}
				0
				&=
				F'_y(\bar{x}, \bar{y},  \bar u) + A^*\lambda^p+\lambda^\textup{vf}f'_y(\bar x,\bar y,\bar u)
				,
				\\
			\label{eq:optimality_system_u}
				0
				&=
				F'_u(\bar{x}, \bar{y},  \bar u) - B^*\lambda^p + \lambda^u + \lambda^\textup{vf}f'_u(\bar x,\bar y,\bar u)
				,
				\\
			\label{eq:optimality_system_Xad}
				\lambda^x&\in N_{X_\ad}(\bar x),
				\\
			\label{eq:optimality_system_Uad}
				\lambda^u &\in N_{U_\ad}(\bar u),
				\\
			\label{eq:optimality_system_vf}
				0 &\leq \lambda^\textup{vf} \perp f(\bar x,\bar y,\bar u)-\varphi(\bar x)-\varepsilon \leq 0.
	\end{align} 
	\end{subequations}
\end{proposition}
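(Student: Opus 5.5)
The plan is to write \eqref{main_problem} as an abstract optimization problem in the Hilbert space $\XX:=\R^n\times\YY\times\UU$ and apply the standard Karush--Kuhn--Tucker theorem for problems in Banach spaces under Robinson's constraint qualification, see \cite{Robinson1976,ZoweKurcyusz1979}. We set
\[
	\Omega:=X_\ad\times\YY\times U_\ad,\qquad
	G(x,y,u):=\bigl(Ay-Bu,\ f(x,y,u)-\varphi(x)-\varepsilon\bigr)\in\YY^*\times\R,\qquad
	C:=\{0\}\times(-\infty,0].
\]
Then the feasible set of \eqref{main_problem} reads $\{(x,y,u)\in\Omega\,|\,G(x,y,u)\in C\}$. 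The set $\Omega$ is nonempty, closed, and convex by \cref{ass:setting}, and so is $C$. Moreover, $F$ is continuously Fr\'{e}chet differentiable by assumption. $G$ is continuously Fr\'{e}chet differentiable because $A,B$ are linear continuous, $f$ is a continuous quadratic, and $\varphi$ is continuously Fr\'{e}chet differentiable by \cref{prop:smooth_value_function} with $\varphi'(x)=E^*(Ex-y(x))$.

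By \cref{prop:RCQ}, Robinson's constraint qualification holds at the local minimizer $(\bar x,\bar y,\bar u)$. The Zowe--Kurcyusz/Robinson multiplier theorem therefore yields multipliers $(\lambda^p,\lambda^\textup{vf})\in\YY^{**}\times\R\cong\YY\times\R$, using reflexivity of $\YY$, such that
\[
	-F'(\bar x,\bar y,\bar u)-G'(\bar x,\bar y,\bar u)^*(\lambda^p,\lambda^\textup{vf})\in N_\Omega(\bar x,\bar y,\bar u),
	\qquad
	(\lambda^p,\lambda^\textup{vf})\in N_C(G(\bar x,\bar y,\bar u)).
\]
The sign of $\lambda^p$ is immaterial, since its component of $C$ is $\{0\}$. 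The normal cone condition on $C$ splits into two parts: the $\YY^*$-component is unrestricted, and the scalar condition is exactly the complementarity relation \eqref{eq:optimality_system_vf}. For this we use $N_{(-\infty,0]}(t)=\{0\}$ if $t<0$ and $=[0,\infty)$ if $t=0$.

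Next we use the product structure $N_\Omega(\bar x,\bar y,\bar u)=N_{X_\ad}(\bar x)\times\{0\}\times N_{U_\ad}(\bar u)$. Defining $\lambda^x$ and $\lambda^u$ as the corresponding components (with the sign absorbed) gives \eqref{eq:optimality_system_Xad} and \eqref{eq:optimality_system_Uad}, where $\UU^*$ is identified with $\UU$. Expanding the adjoint $G'(\bar x,\bar y,\bar u)^*$ componentwise gives the three equations:
\begin{itemize}
	\item the $x$-row contains $\lambda^\textup{vf}\bigl(f'_x(\bar x,\bar y,\bar u)-\varphi'(\bar x)\bigr)$, and inserting the formula of \cref{prop:smooth_value_function} gives \eqref{eq:optimality_system_x};
	\item the $y$-row contains $A^*\lambda^p+\lambda^\textup{vf}f'_y$, which gives \eqref{eq:optimality_system_y};
	\item the $u$-row contains $-B^*\lambda^p+\lambda^\textup{vf}f'_u$, which gives \eqref{eq:optimality_system_u}.
\end{itemize}
In the $y$-row, $f'_y=\bar y-E\bar x\in\VV\hookrightarrow\YY^*$ is understood via the suppressed embeddings.

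The main obstacle is merely bookkeeping. First, one must choose the right version of the multiplier theorem, namely the one with the abstract set constraint $\Omega$ kept and Robinson's constraint qualification in the form stated in \cref{sec:notation}. Second, one must correctly identify dual spaces: $\lambda^p\in\YY^{**}=\YY$ by reflexivity of the Hilbert space $\YY$, and $A^*\lambda^p$, $B^*\lambda^p$ are interpreted accordingly. Everything else follows from \cref{prop:smooth_value_function,prop:RCQ}.
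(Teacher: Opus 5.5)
Your proposal is correct and follows the same route as the paper. The paper's proof simply invokes the smoothness of $\varphi$ from \cref{prop:smooth_value_function}, Robinson's constraint qualification from \cref{prop:RCQ}, and the standard multiplier theorem, and your explicit choice of $(\Omega,G,C)$ and the componentwise expansion of $G'(\bar x,\bar y,\bar u)^*$ are exactly the ``standard calculations'' it leaves implicit. If you want to be fully self-contained, note that Robinson's constraint qualification depends on the chosen representation; for yours it can be checked directly with the direction $(0,y(\bar x)-\bar y,u(\bar x)-\bar u)$. This direction lies in $R_\Omega(\bar x,\bar y,\bar u)$ and satisfies the linearized state equation. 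By convexity of $f(\bar x,\cdot,\cdot)$, the derivative of $(x,y,u)\mapsto f(x,y,u)-\varphi(x)$ along it is at most $\varphi(\bar x)-f(\bar x,\bar y,\bar u)=-\varepsilon$ whenever the value function constraint is active.
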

\begin{proof}
	According to \cref{prop:smooth_value_function,prop:RCQ}, 
	\eqref{main_problem} is a smooth optimization problem satisfying Robinson's constraint qualification
	at $(\bar x,\bar y,\bar u)$.
	As the latter is assumed to be a local minimizer of \eqref{main_problem},
	it is a stationary point,
	and standard calculations show that \eqref{eq:optimality-system}
	is the stationarity system of \eqref{main_problem},
	noting that $\varphi'(\bar x) = E^*(E\bar x - y(\bar x))$ holds by \cref{prop:smooth_value_function}.
\end{proof}

Let us point the reader to the fact that condition \eqref{eq:optimality_system_vf} is rather challenging to treat.
Apart from the fact that it is a complementarity condition,
it involves the function $\varphi$ which is not explicitly known.
Applying standard solvers for mixed-nonlinear complementarity systems to solve \eqref{eq:optimality-system}
is, thus, not recommendable as these typically require many evaluations of $\varphi$,
e.g., for linesearch procedures in (globalized) nonsmooth Newton-type methods. 

To overcome this computational drawback, we consider an optimization problem in which the constraint
\[ 
	f(x, y, u)-\varphi(x)-\varepsilon\leq 0
\] 
is incorporated directly into the objective function
via a nonsmooth penalty term.
More precisely, for a penalty parameter $\alpha>0$, we are concerned with
\begin{equation}\label{penalized_problem}\tag{IOC$_{\varepsilon}(\alpha)$}
	\begin{aligned}
 	&\min\limits_{x,y,u}& 	& F(x, y, u) + \alpha\max\{f(x, y, u)-\varphi(x)-\varepsilon,0\}& \\
 	&\mbox{ s.t. }& 		& x \in X_{\ad},\, A y=B u,\,u \in U_{\ad}.& 
   	\end{aligned}
\end{equation}
Let us elaborate on suitable stationarity conditions for this problem,
where the nonsmoothness in the objective function is treated in terms of Clarke's subdifferential construction, see \cite{clarke1990optimization}.
Throughout, we will denote the associated subdifferentiation operator by $\partial$.
Its precise definition is not required here, we will merely rely on its well-established calculus.

Based on the elementary sum rule from \cite[Proposition~2.3.3]{clarke1990optimization}, 
a feasible point $(\bar x,\bar y,\bar u)\in\R^n\times\YY\times\UU$
of \eqref{penalized_problem} is stationary 
if and only if there exist multipliers 
$(\lambda^x,\lambda^p,\lambda^u)\in\R^n\times\YY\times\UU$ such that
\begin{align*}
	-F'(\bar x,\bar y,\bar u) - (\lambda^x,A^*\lambda^p,-B^*\lambda^p+\lambda^u) 
	\in 
	\alpha \partial \chi(\bar x,\bar y,\bar u)
\end{align*}
as well as \eqref{eq:optimality_system_Xad} and \eqref{eq:optimality_system_Uad} hold,
where $\chi\colon\R^n\times\YY\times\UU\to\R$ is the locally Lipschitz continuous function 
given by
\[
	\chi(x,y,u) 
	:=
	\max\{f(x,y,u)-\varphi(x)-\varepsilon,0\}.
\]
Based on the maximum rule from \cite[Proposition~2.3.12]{clarke1990optimization}, we find
\[
	\partial\chi(\bar x,\bar y,\bar u)
	=
	\begin{cases}
		\{t\,\theta'(\bar x,\bar y,\bar u)\,|\,t\in[0,1]\}	&	\theta(\bar x,\bar y,\bar u) = 0,
		\\
		\{\theta'(\bar x,\bar y,\bar u)\}					&	\theta(\bar x,\bar y,\bar u) > 0,
		\\
		\{(0,0,0)\}											&	\theta(\bar x,\bar y,\bar u) < 0,
	\end{cases}
\]
where we used the continuously Fr\'{e}chet differentiable function $\theta\colon\R^n\times\YY\times\UU\to\R$ defined via
\[
	\theta(x,y,u)
	:=
	f(x,y,u) - \varphi(x) - \varepsilon
\]
whose derivative at $(\bar x,\bar y,\bar u)$ is given by
\[
	\theta'(\bar x,\bar y,\bar u)
	=
	f'(\bar x,\bar y,\bar u) - (E^*(E\bar x - y(\bar x)),0,0),
\]
see \cref{prop:smooth_value_function}.
Putting everything together, 
we obtain the stationarity conditions 
\begin{subequations}\label{eq:optimality-system_1}
	\begin{align}
		\label{eq:optimality-system_1x}
			0
			&=
			F'_x(\bar{x}, \bar{y},  \bar u) + \lambda^x+\alpha t\left[ f_x'(\bar{x},\bar{y},\bar{u}) 
			- 
			E^{*}\left( E\bar x - y(\bar x)\right) \right]
			,
			\\
		\label{eq:optimality_system_1y}
			0
			&=
			F'_y(\bar{x}, \bar{y},  \bar u) + A^* \lambda^p+\alpha t f'_y(\bar x,\bar y,\bar u)
			,
			\\
		\label{eq:optimality_system_1u_1}
			0
			&=
			F'_u(\bar{x}, \bar{y},  \bar u) -B^*\lambda^p + \lambda^u +\alpha tf'_u(\bar x,\bar y,\bar u)
			,
			\\
		\label{eq:optimality_system_1vf_1}
			t&\in \partial\max\{\cdot,0\}(f(\bar x,\bar y,\bar u)-\varphi(\bar x)-\varepsilon),
	\end{align} 
\end{subequations}
as well as \eqref{eq:optimality_system_Xad} and \eqref{eq:optimality_system_Uad},
where we used
\[
	\partial\max\{\cdot,0\}(s)
	=
	\begin{cases}
		[0,1]	&	s=0,\\
		\{1\}	&	s>0,\\
		\{0\}	&	s<0.
	\end{cases}
\]

The following result is decisive for our subsequent investigations.

\begin{proposition}\label{prop:NOC_IOPeps_pen}
 	Let $(\bar{x}, \bar{y}, \bar{u})\in\R^n\times\YY\times\UU$ 
  	be a stationary point of \eqref{main_problem},
  	and let 
  	$(\lambda^x,\lambda^p,\lambda^u,\lambda^\textup{vf})\in\R^n\times\YY\times\UU\times\R$ 
  	be associated multipliers solving the stationarity system \eqref{eq:optimality-system}.
  	Then, for each $\alpha \geq \lambda^\textup{vf}$, 
  	$(\bar{x}, \bar{y}, \bar{u})$ is a stationary point 
  	of the partially penalized problem \eqref{penalized_problem}.
\end{proposition}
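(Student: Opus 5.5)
The plan is to reuse the multipliers $(\lambda^x,\lambda^p,\lambda^u)$ from \eqref{eq:optimality-system} unchanged and to choose the scalar $t$ so that $\alpha t=\lambda^\textup{vf}$. Since $\alpha>0$ and $\alpha\geq\lambda^\textup{vf}\geq 0$, set $t:=\lambda^\textup{vf}/\alpha\in[0,1]$. With this choice, \eqref{eq:optimality-system_1x}, \eqref{eq:optimality_system_1y}, and \eqref{eq:optimality_system_1u_1} are literally \eqref{eq:optimality_system_x}, \eqref{eq:optimality_system_y}, and \eqref{eq:optimality_system_u}. The conditions \eqref{eq:optimality_system_Xad} and \eqref{eq:optimality_system_Uad} carry over verbatim. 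Feasibility of $(\bar x,\bar y,\bar u)$ for \eqref{penalized_problem} is immediate, since its constraints form a subset of those of \eqref{main_problem}.

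The only thing left is the inclusion \eqref{eq:optimality_system_1vf_1}, that is, $t\in\partial\max\{\cdot,0\}(s)$ with $s:=f(\bar x,\bar y,\bar u)-\varphi(\bar x)-\varepsilon$. By \eqref{eq:optimality_system_vf} we know $s\leq 0$, so we split into two cases.
\begin{enumerate}
\item If $s=0$, the subdifferential is $[0,1]$, and $t\in[0,1]$ holds by the choice of $t$ together with $\alpha\geq\lambda^\textup{vf}$.
\item If $s<0$, the complementarity in \eqref{eq:optimality_system_vf} forces $\lambda^\textup{vf}=0$. Hence $t=0$, which lies in $\{0\}=\partial\max\{\cdot,0\}(s)$.
\end{enumerate}
The case $s>0$ cannot occur.

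Finally, recall the derivation preceding the statement: via Clarke's sum and maximum rules, the system \eqref{eq:optimality-system_1} together with \eqref{eq:optimality_system_Xad} and \eqref{eq:optimality_system_Uad} characterizes stationarity of \eqref{penalized_problem}. So the point is stationary there. There is no real obstacle. The only step needing care is the bound $t\leq 1$, which is exactly where the hypothesis $\alpha\geq\lambda^\textup{vf}$ enters, together with the use of complementarity in the inactive case.
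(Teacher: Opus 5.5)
Your proposal is correct and essentially identical to the paper's proof. Both reuse $(\lambda^x,\lambda^p,\lambda^u)$ and set $t:=\lambda^\textup{vf}/\alpha\in[0,1]$; the only cosmetic difference is that the paper splits into $\lambda^\textup{vf}=0$ (taking $t:=0$) versus $\lambda^\textup{vf}>0$ (where complementarity forces $s=0$), whereas you split on the sign of $s$ and use complementarity in the case $s<0$.
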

\begin{proof}
	We reuse the already known multipliers $(\lambda^x,\lambda^p,\lambda^u,\lambda^\textup{vf})$
	satisfying the conditions stated in \eqref{eq:optimality-system}.
	Then \eqref{eq:optimality_system_Xad} and \eqref{eq:optimality_system_Uad} hold trivially,
	and it remains to verify the conditions from \eqref{eq:optimality-system_1}.
	We note that $f(\bar x,\bar y,\bar u)-\varphi(\bar x)-\varepsilon\leq 0$ is valid
	since $(\bar x,\bar y,\bar u)$ is feasible to \eqref{main_problem}.
	If $\lambda^\textup{vf}=0$ is true, 
	we may choose $t:=0$ in order to satisfy the conditions \eqref{eq:optimality-system_1}.
	Otherwise, $\lambda^\textup{vf}>0$ holds, 
	which is only possible if $f(\bar x,\bar y,\bar u)-\varphi(\bar x)-\varepsilon = 0$ is true,
	so we can exploit $\alpha\geq\lambda^\textup{vf}$ and set $t:=\lambda^\textup{vf}/\alpha\in [0,1]$ 
	in order to see that the conditions \eqref{eq:optimality-system_1} are valid.
\end{proof}

Let us close this subsection with some brief remarks addressing the observations from \cref{prop:NOC_IOPeps_pen}.

\begin{remark}\label{rem:relation_stationarity}
	\hfill
	\begin{enumerate}
		\item We note that the assumptions of \cref{prop:NOC_IOPeps_pen} imply that $f(\bar x,\bar y,\bar u)-\varphi(\bar x)-\varepsilon\leq 0$ holds.
			Then \eqref{eq:optimality_system_1vf_1} reduces to the orthogonality-type condition
			\[
				t\in[0,1],\quad t\perp f(\bar x,\bar y,\bar u)-\varphi(\bar x)-\varepsilon.
			\]
		\item Given $\alpha>0$, let $(\bar x,\bar y,\bar u)\in \R^n\times\YY\times\UU$ be a stationary point of \eqref{penalized_problem},
			and let $(\lambda^x,\lambda^p,\lambda^u,t)\in\R^n\times\YY\times\UU\times\R$ provide a solution
			of \eqref{eq:optimality-system_1} such that \eqref{eq:optimality_system_Xad} and \eqref{eq:optimality_system_Uad} hold,
			i.e., the multipliers $(\lambda^x,\lambda^p,\lambda^u,t)$ solve the stationarity system associated with \eqref{penalized_problem}. 
			If $f(\bar x,\bar y,\bar u)-\varphi(\bar x)-\varepsilon\leq 0$ is valid,
			we observe that $(\lambda^x,\lambda^p,\lambda^u,\alpha t)$ provides a solution of \eqref{eq:optimality-system},
			i.e., $(\bar x,\bar y,\bar u)$ is a stationary point of \eqref{main_problem}.
		\item Given $\alpha>0$, let $(\bar x,\bar y,\bar u)\in \R^n\times\YY\times\UU$ be a stationary point of \eqref{penalized_problem},
			and assume that $f(\bar x,\bar y,\bar u)-\varphi(\bar x)-\varepsilon > 0$ is valid.
			Then \eqref{eq:optimality_system_1vf_1} reduces to $t=1$.
	\end{enumerate}
\end{remark}

\subsection{Analysis of a parametric penalized problem}\label{sec:penalized_subproblem}

The solution method we are going to construct will iteratively solve the stationary system associated with \eqref{penalized_problem}
while treating $\alpha$ in an adaptive way.
In each iteration of the algorithm,
we decouple the solution of the stationarity system \eqref{eq:optimality-system_1}, \eqref{eq:optimality_system_Xad}, and \eqref{eq:optimality_system_Uad}
for fixed $\alpha>0$ into two steps. First, we keep the upper-level variables fixed and merely focus on the update of the
lower-level variables and multipliers by solving a mixed-nonlinear complementarity system. In a second step, the upper-level variables will be updated
by a simple projection step.

In order to better understand the first of these two steps,
this subsection is dedicated to the analysis of the penalized parametric optimization problem
\begin{equation}\label{parametric_penalized_problem}\tag{PP$_\alpha(x)$}
\begin{aligned}
\min_{y,u}\quad &
F(x,y,u)
+\alpha\left(
f(x,y,u)
-\varphi(x)-\varepsilon
\right),\\
\text{s.t.}\quad
& Ay=Bu,\,u\in U_{\ad},
\end{aligned}
\end{equation}
where $x\in\R^n$ is the parameter and $\alpha>0$ is a fixed constant.
Let us note that \eqref{parametric_penalized_problem} is a convex optimization problem.
It is easy to see that Robinson's constraint qualification holds at each feasible point of \eqref{parametric_penalized_problem},
so that, given a global minimizer $(\bar y,\bar u)\in\YY\times\UU$ of \eqref{parametric_penalized_problem},
there exist multipliers $(\lambda^p,\lambda^u)\in\YY\times\UU$ such that the conditions
\begin{align*}
	0
	&=
	F'_y(x,\bar y,\bar u) + A^*\lambda^p + \alpha f'_y(x,\bar y,\bar u),
	\\
	0
	&=
	F'_u(x,\bar y,\bar u) - B^*\lambda^p + \lambda^u + \alpha f'_u(x,\bar y,\bar u),
\end{align*}
and \eqref{eq:optimality_system_Uad} hold. 
Using the fact that $A$ is an isomorphism,
it is clear that the multipliers $\lambda^p$ and $\lambda^u$, in the case of existence,
are uniquely determined.
Conversely, due to convexity, these stationarity conditions are also sufficient for global minimality
when combined with the feasibility conditions.

To start, let us discuss the primal-dual solution behavior of \eqref{parametric_penalized_problem}.
\begin{proposition}\label{prop_parametric_penalized_1}
	Fix $\alpha>0$.
	Assume that the partial Fr\'{e}chet derivatives $F'_y$ and $F'_u$ are Lipschitz continuous.
	Then, for each $x\in\R^n$, \eqref{parametric_penalized_problem} admits a uniquely determined solution $(y_\alpha(x),u_\alpha(x))\in\YY\times\UU$,
	and there exist uniquely determined multipliers $(\lambda^p_\alpha(x),\lambda^u_\alpha(x))\in\YY\times\UU$ solving the associated stationarity
	system. Furthermore, the mappings $y_\alpha\colon\R^n\to\YY$, $u_\alpha\colon\R^n\to\UU$, $\lambda^p_\alpha\colon\R^n\to\YY$, 
	and $\lambda^u_\alpha\colon\R^n\to\UU$ are Lipschitz continuous.
\end{proposition}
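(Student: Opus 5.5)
Eliminate the state via $y=Su$ with $S:=A^{-1}B$. Then \eqref{parametric_penalized_problem} becomes the reduced problem of minimizing
\[
	j_x(u):=F(x,Su,u)+\alpha\bigl(f(x,Su,u)-\varphi(x)-\varepsilon\bigr)
\]
over $u\in U_\ad$. Since $F$ is convex and $u\mapsto f(x,Su,u)=\frac12\norm{Su-Ex}_\VV^2+\frac\sigma2\norm{u}_\UU^2$ is strongly convex with modulus $\sigma$, $j_x$ is strongly convex with modulus at least $\alpha\sigma>0$. It is also continuous. As $U_\ad$ is nonempty, closed, and convex, a unique minimizer $u_\alpha(x)$ exists; boundedness of $U_\ad$ makes existence even more immediate. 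Setting $y_\alpha(x):=Su_\alpha(x)$ gives the unique solution of \eqref{parametric_penalized_problem}. Robinson's constraint qualification, already noted before the proposition, yields multipliers, and since $A$ is an isomorphism they are unique:
\[
	\lambda^p_\alpha(x)=-A^{-*}\bigl(F'_y(x,y_\alpha(x),u_\alpha(x))+\alpha(y_\alpha(x)-Ex)\bigr),
\]
\[
	\lambda^u_\alpha(x)=B^*\lambda^p_\alpha(x)-F'_u(x,y_\alpha(x),u_\alpha(x))-\alpha\sigma u_\alpha(x).
\]
Here $y_\alpha(x)-Ex$ is read in $\YY^*$ via the suppressed embedding $\VV\hookrightarrow\YY^*$.

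For Lipschitz continuity of $u_\alpha$, which is the key step, I would use the variational inequality characterization. Optimality of $u_\alpha(x)$ reads $\dual{\nabla j_x(u_\alpha(x))}{v-u_\alpha(x)}_\UU\ge0$ for all $v\in U_\ad$, where
\[
	\nabla j_x(u)=S^*F'_y(x,Su,u)+F'_u(x,Su,u)+\alpha\bigl(S^*(Su-Ex)+\sigma u\bigr).
\]
Take $x_1,x_2$, write $u_i:=u_\alpha(x_i)$, test the inequality for $x_1$ with $v=u_2$ and the one for $x_2$ with $v=u_1$, and add them. This gives
\[
	\dual{\nabla j_{x_1}(u_1)-\nabla j_{x_2}(u_2)}{u_2-u_1}_\UU\ge0.
\]
Split $\nabla j_{x_1}(u_1)-\nabla j_{x_2}(u_2)=[\nabla j_{x_1}(u_1)-\nabla j_{x_1}(u_2)]+[\nabla j_{x_1}(u_2)-\nabla j_{x_2}(u_2)]$. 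Strong monotonicity of $\nabla j_{x_1}$, inherited from convexity of $F$ (monotone gradient) plus the $\alpha\sigma$-strongly monotone part, gives
\[
	\alpha\sigma\norm{u_1-u_2}_\UU^2\le\norm{\nabla j_{x_1}(u_2)-\nabla j_{x_2}(u_2)}_\UU\,\norm{u_1-u_2}_\UU.
\]
The right-hand difference involves only the change in $x$ at the fixed control $u_2$. It is bounded by $L\norm{x_1-x_2}$ using Lipschitz continuity of $F'_y$ and $F'_u$ (jointly in all variables, which is how I read the assumption) together with boundedness of $S,S^*,E$. Hence $\norm{u_1-u_2}_\UU\le \frac{L}{\alpha\sigma}\norm{x_1-x_2}$.

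The main subtlety is precisely this point: the Lipschitz hypothesis must cover the $x$-dependence of $F'_y,F'_u$. Monotonicity of $(y,u)\mapsto(F'_y,F'_u)$, which comes from convexity of $F$ in all variables jointly, is what allows the $F$-part of the $u$-difference to be discarded. Once $u_\alpha$ is Lipschitz, the rest follows directly: $y_\alpha=S\circ u_\alpha$ is Lipschitz, and the explicit formulas for $\lambda^p_\alpha$ and $\lambda^u_\alpha$ are compositions of bounded linear operators with Lipschitz maps of $(x,y_\alpha(x),u_\alpha(x))$, so they are Lipschitz as well.
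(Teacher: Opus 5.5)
Your proposal is correct and follows essentially the same route as the paper: you reduce to the control via $S$, use $\alpha\sigma$-strong convexity for existence and uniqueness, obtain Lipschitz continuity of $u_\alpha$ by adding the two variational inequalities and splitting off the $x$-difference of the reduced gradient, and then transfer Lipschitzness to $y_\alpha$, $\lambda^p_\alpha$, $\lambda^u_\alpha$ through $S$ and the stationarity equations. Where you split at $(x_1,u_2)$ the paper splits at $(x_2,u_1)$, which is immaterial. Your remark that the Lipschitz hypothesis on $F'_y$ and $F'_u$ must also cover the dependence on $x$ is exactly what the paper uses implicitly in its estimate of $\norm{H'_u(x_1,u_1)-H'_u(x_2,u_1)}_{\UU}$.
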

\begin{proof}
	To start, let us define the state-reduced objective function $H\colon\R^n\times\UU\to\R$ of \eqref{parametric_penalized_problem} by means of
	\[
	H(x,u)
	:=
	F(x,Su,u)
	+\alpha\left(
	f(x,Su,u)
	-\varphi(x)-\varepsilon
	\right).
	\]
	Since \(F\) and $f$ are convex while the operator \(S\) is linear, the mapping $H(x,\cdot)$ is convex.
	The presence of the quadratic regularization term in $f$, see \eqref{eq:def_f}, 
	even ensures that $H(x,\cdot)$ is $\alpha\sigma$-uniformly convex. 
	Since $U_{\ad}$ is nonempty, closed, and convex, the state-reduced problem
	\begin{equation}\label{eq:reduced_subproblem}
		\min\limits_u\quad H(x,u)\quad\text{s.t.}\quad u\in U_{\ad}
	\end{equation}
	admits a unique global minimizer, denoted by $u_\alpha(x)$.
	Hence, there is $y_\alpha(x)\in\YY$, namely $y_\alpha(x):=Su_\alpha(x)$, such that $(y_\alpha(x),u_\alpha(x))$ 
	is the uniquely determined global minimizer of \eqref{parametric_penalized_problem}.
	Furthermore, according to our earlier comments, there exist uniquely determined multipliers
	$(\lambda^p_\alpha(x),\lambda^u_\alpha(x))\in\YY\times\UU$ such that
	\begin{align*}
		0
		&=
		F'_y(x,y_\alpha(x),u_\alpha(x)) + A^*\lambda^p_\alpha(x) + \alpha f'_y(x,y_\alpha(x),u_\alpha(x)),
		\\
		0
		&=
		F'_u(x,y_\alpha(x),u_\alpha(x)) - B^*\lambda^p_\alpha(x) + \lambda^u_\alpha(x) 
		+ 
		\alpha f'_u(x,y_\alpha(x),u_\alpha(x)),
		\\
		\lambda^u_\alpha(x)&\in N_{U_\ad}(u_\alpha(x)).
	\end{align*}

	Next, we prove Lipschitz continuity of the primal-dual solution mappings $y_\alpha$,
	$u_\alpha$, $\lambda^p_\alpha$, and $\lambda^u_\alpha$.
	Let $x_1,x_2\in \R^n$ be chosen arbitrarily, and set $y_i:=y_\alpha(x_i)$ and $u_i:=u_\alpha(x_i)$
	as well as $\lambda^p_i:=\lambda^p_\alpha(x_i)$ and $\lambda^u_i:=\lambda^u_i(x_i)$ for each $i=1,2$. 
	Hence, we know
	\begin{subequations}\label{eq:PP_stat}
	\begin{align}
		\label{eq:PP_stat_y}
		0
		&=
		F'_y(x_i,y_i,u_i) + A^*\lambda^p_i + \alpha (y_i - Ex_i),
		\\
		\label{eq:PP_stat_u}
		0
		&=
		F'_u(x_i,y_i,u_i) - B^*\lambda^p_i + \lambda^u_i + \alpha \sigma u_i,
		\\
		\label{eq:PP_stat_lam_u}
		\lambda^u_i&\in N_{U_\ad}(u_i)
	\end{align}
	\end{subequations}
	for $i=1,2$, where we used the precise definition of $f$ from \eqref{eq:def_f}.
	Furthermore, we note the relation
	\[
		-H'_u(x_i,u_i) = \lambda^u_i
	\]
	for $i=1,2$ which results from
	the optimality conditions associated with the state-reduced problem \eqref{eq:reduced_subproblem}.	
	Exploiting the latter, \eqref{eq:PP_stat_lam_u}, and the definition of the normal cone, we find
	\[
		\dual{H'_u(x_1,u_1)}{u_2-u_1}_{\UU} \geq 0,
		\quad
		\dual{H'_u(x_2,u_2)}{u_1-u_2}_{\UU} \geq 0,
	\]
	and adding up these inequalities yields
	\[
		\dual{H'_u(x_1,u_1) - H'_u(x_2,u_2)}{u_1-u_2}_{\UU}\leq 0.
	\]
	Next, we add $\dual{H'_u(x_2,u_1)}{u_1-u_2}_{\UU}$ on both sides
	so that some rearrangements give
	\[
		\dual{H'_u(x_2,u_1) - H'_u(x_2,u_2)}{u_1-u_2}_{\UU}
		\leq
		-\dual{H'_u(x_1,u_1) - H'_u(x_2,u_1)}{u_1-u_2}_{\UU}.
	\]
	Since $H(x_2,\cdot)$ is $\alpha\sigma$-uniformly convex, we find
	\[
		\alpha\sigma
		\norm{u_1-u_2}_{\UU}^2
		\leq
		-\dual{H'_u(x_1,u_1) - H'_u(x_2,u_1)}{u_1-u_2}_{\UU},
	\]
	which yields
	\[
		\alpha\sigma\norm{u_1-u_2}_{\UU}
		\leq
		\norm{H'_u(x_1,u_1) - H'_u(x_2,u_1)}_{\UU}.
	\]
	Noting that 
	\[
		H'_u(x,u)
		=
		S^*F_y'(x,Su,u)
		+
		F_u'(x,Su,u)
		+
		\alpha S^*(Su-Ex)
		+
		\alpha\sigma u
	\]
	holds for all $(x,u)\in\R^n\times\UU$, we find
	\begin{align*}
		\norm{H'_u(x_1,u_1) - H'_u(x_2,u_1)}_{\UU}
		\leq 
		(c_1L_{F'_y} + L_{F'_u} + \alpha c_2)\norm{x_1-x_2},
	\end{align*}
	where $L_{F'_y}\geq 0$ and $L_{F'_u}\geq 0$ denote the Lipschitz constants
	of $F'_y$ and $F'_u$, respectively, and 
	where we used $c_1:=\norm{S^*}_{\LL(\YY^*,\UU)}$ and $c_2:=\norm{S^*E}_{\LL(\R^n,\UU)}$.
	Thus, we end up with
	\begin{equation}\label{eq:explicit_Lipschitz_estimate}
		\norm{u_1-u_2}_{\UU}
		\leq
		\frac{1}{\alpha\sigma}
		(c_1L_{F'_y} + L_{F'_u} + \alpha c_2)\norm{x_1-x_2},
	\end{equation}
	verifying Lipschitzness of $u_\alpha$.
	Recalling that $S$ is linear and continuous,
	this immediately yields the Lipschitzness of $y_\alpha$. 
	Using \eqref{eq:PP_stat_y}, the Lipschitzness of $F'_y$, and the fact that $A^*$ is a continuous isomorphism,
	Lipschitzness of $\lambda^p_\alpha$ follows.
	Finally, exploiting \eqref{eq:PP_stat_u} and the Lipschitzness of $F'_u$,
	we also obtain Lipschitzness of $\lambda^u_\alpha$.
\end{proof}

Using \eqref{eq:explicit_Lipschitz_estimate} 
and the last part of the proof of \cref{prop_parametric_penalized_1},
it is possible to find explicit Lipschitz constants for the mappings
$y_\alpha$, $u_\alpha$, $\lambda^p_\alpha$, and $\lambda^u_\alpha$.
Whenever a lower bound $\alpha_0>0$ for $\alpha$ is available,
we also note that \eqref{eq:explicit_Lipschitz_estimate} allows to find
a uniform Lipschitz constant for $y_\alpha$ and $u_\alpha$ for all $\alpha\geq\alpha_0$.

In our next result, we review properties of the optimal value function $\phi_\alpha\colon\R^n\to\R$ of
\eqref{parametric_penalized_problem}, which assigns to each $x\in\R^n$ the optimal value
\begin{equation}\label{eq:yet_another_value_function}
	\phi_\alpha(x)
	:=
	F(x,y_\alpha(x),u_\alpha(x))
	+
	\alpha(f(x,y_\alpha(x),u_\alpha(x)) - \varphi(x) - \varepsilon).
\end{equation}
As we will see, the latter is continuously Fr\'{e}chet differentiable thanks to 
\cref{prop:smooth_value_function,prop_parametric_penalized_1}.

\begin{proposition}\label{prop_parametric_penalized_2}
	Fix $\alpha>0$. Assume that the partial Fr\'{e}chet derivatives $F'_y$ and $F'_u$ 
	are Lipschitz continuous.
	Then the optimal value function $\phi_\alpha\colon\R^n\to\R$ defined in \eqref{eq:yet_another_value_function} 
	is continuously Fr\'{e}chet differentiable.
	Furthermore, for each $\bar x\in\R^n$, we have
	\[
		\phi'_\alpha(\bar x)
		=
		F'_x(\bar x,y_\alpha(\bar x),u_\alpha(\bar x))+\alpha E^*(y(\bar x)-y_\alpha(\bar x)),
	\]
	where $y(\bar x)$ is the optimal state of the lower-level problem \hyperref[Lower_level_problem]{\textup{(LL$(\bar x)$)}},
	see \cref{Prop_llp_1}.
	If $F'_x$ is Lipschitz continuous, then $\phi'_\alpha$ is Lipschitz continuous as well.
\end{proposition}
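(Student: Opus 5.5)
Since $\phi_\alpha$ is the optimal value function of a parametric convex problem with unique solutions and multipliers, I plan to compute its derivative by a direct Danskin-type argument, using the Lipschitz solution maps from \cref{prop_parametric_penalized_1} together with the stationarity system \eqref{eq:PP_stat}. The first step is to rewrite $\phi_\alpha(x)=\Psi(x,y_\alpha(x),u_\alpha(x))-\alpha\varphi(x)-\alpha\varepsilon$ with $\Psi(x,y,u):=F(x,y,u)+\alpha f(x,y,u)$. The term $\alpha\varphi$ is continuously differentiable with derivative $\alpha E^*(Ex-y(x))$ by \cref{prop:smooth_value_function}. It therefore suffices to show that $g(x):=\Psi(x,y_\alpha(x),u_\alpha(x))$ is differentiable with $g'(\bar x)=\Psi'_x(\bar x,y_\alpha(\bar x),u_\alpha(\bar x))$. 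Given this, the claimed formula follows: $\Psi'_x=F'_x+\alpha E^*(Ex-y_\alpha(x))$, and subtracting $\alpha E^*(Ex-y(x))$ leaves $F'_x+\alpha E^*(y(x)-y_\alpha(x))$.

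To differentiate $g$, fix $\bar x$, set $h:=x-\bar x$, write $(\bar y,\bar u)$ and $(y,u)$ for the solutions at $\bar x$ and $x$, and let $\bar\lambda^p,\bar\lambda^u$ and $\lambda^p,\lambda^u$ be the corresponding multipliers. I will derive a two-sided estimate. For the upper bound, the pair $(\bar y,\bar u)$ is feasible for the problem at $x$, because feasibility does not depend on $x$. Hence $g(x)\le\Psi(x,\bar y,\bar u)=g(\bar x)+\Psi'_x(\bar x,\bar y,\bar u)h+o(\|h\|)$ by continuous differentiability of $\Psi$. For the lower bound, the pair $(y,u)$ is feasible for the problem at $\bar x$, so $g(\bar x)\le\Psi(\bar x,y,u)$ and thus $g(x)-g(\bar x)\ge\Psi(x,y,u)-\Psi(\bar x,y,u)$. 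By the mean value theorem, the right-hand side equals $\Psi'_x(\xi,y,u)h$ for some $\xi$ on the segment between $\bar x$ and $x$. Since $(y,u)\to(\bar y,\bar u)$ as $x\to\bar x$ (by Lipschitz continuity) and $\Psi'_x$ is continuous, this equals $\Psi'_x(\bar x,\bar y,\bar u)h+o(\|h\|)$. Combining the two bounds gives Fr\'echet differentiability of $g$ with the stated derivative, so the multipliers are not even needed. Continuity of $\phi'_\alpha$ then follows from continuity of $F'_x$ together with continuity of $y$ and $y_\alpha$ (\cref{Prop_llp_1,prop_parametric_penalized_1}).

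For the final claim, the term $x\mapsto F'_x(x,y_\alpha(x),u_\alpha(x))$ is a composition of the Lipschitz map $F'_x$ with the Lipschitz map $x\mapsto(x,y_\alpha(x),u_\alpha(x))$, hence Lipschitz. The term $\alpha E^*(y(x)-y_\alpha(x))$ is Lipschitz because $E^*$ is bounded and $y,y_\alpha$ are Lipschitz. Here I also need the bounded operator $\VV\to\R^n$ obtained by composing with the embedding $\YY\hookrightarrow\VV$.

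The main obstacle is making the lower bound rigorous. One must check that the remainder in the mean value step is uniform, i.e.\ that $\|\Psi'_x(\xi,y,u)-\Psi'_x(\bar x,\bar y,\bar u)\|\to0$ as $x\to\bar x$. This holds because $(\xi,y,u)\to(\bar x,\bar y,\bar u)$ in norm, which is guaranteed by the Lipschitz continuity from \cref{prop_parametric_penalized_1}. It is also the point where the hypothesis that $F'_y$ and $F'_u$ are Lipschitz enters.
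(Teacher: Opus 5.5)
Your proof is correct, but it takes a different route from the paper's.

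\textbf{The paper's route.} It Taylor-expands $J=F+\alpha(f-\varphi-\varepsilon)$ to first order at $\bar z=(\bar x,\bar y,\bar u)$ in all three variables. It then shows that the $(y,u)$-part of the linear term,
$\dual{J'_y(\bar z)}{y-\bar y}_{\YY}+\dual{J'_u(\bar z)}{u-\bar u}_{\UU}$,
lies between $0$ and $C\|x-\bar x\|^2$. This step uses:
\begin{itemize}
\item the normal-cone inequalities of the stationarity systems at both $x$ and $\bar x$;
\item the identity $y-\bar y=S(u-\bar u)$;
\item Lipschitz continuity of the primal maps $y_\alpha,u_\alpha$ and of the multiplier map $\lambda^p_\alpha$;
\item the Lipschitz continuity of $J'_u$ itself.
\end{itemize}

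\textbf{Your route.} You use the classical Danskin sandwich. Since the feasible set $\{(y,u)\,|\,Ay=Bu,\ u\in U_\ad\}$ does not depend on $x$, you compare with $(\bar y,\bar u)$ at $x$ and with $(y,u)$ at $\bar x$. This brackets $g(x)-g(\bar x)$ between two quantities that both equal $\Psi'_x(\bar x,\bar y,\bar u)h+o(\|h\|)$. The lower bound follows from the mean value theorem applied to $t\mapsto\Psi(\bar x+th,y,u)$ together with joint continuity of $\Psi'_x$.

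\textbf{What each buys.} Your argument is more elementary. The sandwich itself needs neither multipliers nor convexity; it only uses that $\Psi$ is $C^1$ and that the unique minimizers depend \emph{continuously} on $x$. Consequently, the Lipschitz hypothesis on $F'_y,F'_u$ enters only through \cref{prop_parametric_penalized_1} and the final Lipschitz claim. The paper's multiplier-based, envelope-theorem-type argument gives quantitative $O(\|x-\bar x\|^2)$ control of the $(y,u)$-contribution. It is also the argument that would adapt to parameter-dependent constraints, where your comparison trick is unavailable.

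\textbf{One small slip.} Continuity of $\phi'_\alpha$ also needs continuity of $u_\alpha$, not just of $y$ and $y_\alpha$, since $F'_x$ is evaluated at $u_\alpha(x)$.
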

\begin{proof}
	Let $J\colon\R^n\times\YY\times\UU\to\R$ be the objective function of \eqref{parametric_penalized_problem},
	and observe that the latter is continuously Fr\'{e}chet differentiable according to \cref{prop:smooth_value_function}.
	For $x,\bar x\in\R^n$, set $z:=(x,y,u)$ and $\bar z:=(\bar x,\bar y,\bar u)$,
	where $y:=y_\alpha(x)$, $u:=u_\alpha(x)$, $\bar y:=y_\alpha(\bar x)$, and $\bar u:=u_\alpha(\bar x)$
	are used for brevity of notation.
	Then, for $\ZZ:=\R^n\times\YY\times\UU$, we find
	\begin{equation}\label{prob_cont_diff_1}
	\begin{aligned}
		\phi_\alpha(x)-\phi_\alpha(\bar x)
		&=
		J(z)-J(\bar z)
		=
		\dual{J'(\bar z)}{z-\bar z}_{\ZZ} + \oo(\norm{z-\bar z}_{\ZZ}).
	\end{aligned}
	\end{equation}
	We also note that, due to \cref{prop_parametric_penalized_1}, there is a constant $c>0$ such that
	\begin{align*}
		\frac{\abs{\oo(\norm{z-\bar z}_{\ZZ})}}{\norm{z-\bar z}_{\ZZ}}
		\leq
		\frac{\abs{\oo(\norm{z-\bar z}_{\ZZ})}}{\norm{x-\bar x}}
		=
		\frac{\abs{\oo(\norm{z-\bar z})}}{\norm{z-\bar z}_{\ZZ}}
		\frac{\norm{z-\bar z}_{\ZZ}}{\norm{x-\bar x}}
		\leq
		c\frac{\abs{\oo(\norm{z-\bar z})}}{\norm{z-\bar z}_{\ZZ}}.
	\end{align*}
	Hence, we find
	\begin{equation}\label{eq:modified_oo}
		\frac{\oo(\norm{z-\bar z}_{\ZZ})}{\norm{x-\bar x}} \to 0 \quad \text{as}\quad x\to\bar x
	\end{equation}
	from the continuity of $y_\alpha$ and $u_\alpha$.
	According to \cref{prop_parametric_penalized_1} and the definition of the normal cone, 
	using $\lambda^p:=\lambda^p_\alpha(x)$ and $\bar \lambda^p:=\lambda^p_\alpha(\bar x)$,
	we find
	\[
		\begin{aligned}
			J'_y(z) + A^*\lambda^p &=0,&\quad \dual{J'_u(z)-B^*\lambda^p}{\bar u-u}_{\UU}&\geq 0,&
			\\
			J'_y(\bar z) + A^*\bar\lambda^p &=0,&\quad\dual{J'_u(\bar z)-B^*\bar\lambda^p}{u-\bar u}_{\UU}&\geq 0.&
		\end{aligned}
	\]
	Exploiting $Ay=Bu$ and $A\bar y=B\bar u$, we find $y-\bar y = S(u-\bar u)$ by linearity of the solution operator $S$.
	This yields
	\begin{align*}
		\dual{J'_y(\bar z)}{y-\bar y}_{\YY}
		&=
		\dual{S^*J'_y(\bar z)}{u-\bar u}_{\UU}
		\\
		&=
		\dual{-S^*A^*\bar\lambda^p}{u-\bar u}_{\UU}
		=
		-\dual{B^*\bar\lambda^p}{u-\bar u}_{\UU}, 
	\end{align*}
	and
	\begin{align*}
		\dual{J'_y(\bar z)}{y-\bar y}_{\YY} + \dual{J'_u(\bar z)}{u-\bar u}_{\UU}
		=
		\dual{J'_u(\bar z)-B^*\bar\lambda^p}{u-\bar u}_{\UU}
	\end{align*}
	follows.
	We note that $J'_u$ is Lipschitz continuous as $F'_u$ is assumed to be Lipschitz continuous while $f'_u$ is linear.
	Thus, taking the above and \cref{prop_parametric_penalized_1} into account, we find a constant $C>0$ such that
	\begin{align*}
		0
		&\leq
		\dual{J'_u(\bar z) - B^*\bar\lambda^p}{u-\bar u}_{\UU}
		\\
		&=
		\dual{J'_y(\bar z)}{y-\bar y}_{\YY} + \dual{J'_u(\bar z)}{u-\bar u}_{\UU}
		\\
		&=
		\dual{J'_u(\bar z) -J'_u(z) + J'_u(z) - B^*\lambda^p + B^*(\lambda^p-\bar\lambda^p)}{u-\bar u}_{\UU}
		\\
		&\leq
		\dual{J'_u(\bar z)-J'_u(z)}{u-\bar u}_{\UU} + \dual{B^*(\lambda^p-\bar\lambda^p)}{u-\bar u}_{\UU}
		\\
		&\leq
		C\norm{x-\bar x}^2.
	\end{align*}
	Thus, by \eqref{prob_cont_diff_1} and \eqref{eq:modified_oo}, we obtain
	\begin{align*}
		&\frac{\phi_\alpha(x)-\phi_\alpha(\bar x) - \dual{J'_x(\bar z)}{x-\bar x}}{\norm{x-\bar x}}
		\\
		&\qquad
		=
		\frac{\dual{J'_y(\bar z)}{y-\bar y}_{\YY} + \dual{J'_u(\bar z)}{u-\bar u}_{\UU}}{\norm{x-\bar x}}
		+
		\frac{\oo(\norm{z-\bar z}_{\ZZ})}{\norm{x-\bar x}}
		\to 0
		\quad\text{as}\quad x\to\bar x,
	\end{align*}
	showing that $\phi_\alpha$ is Fr\'{e}chet differentiable at $\bar x$ with Fr\'{e}chet derivative
	\begin{align*}
		\phi'_\alpha(\bar x) 
		&=
		J'_x(\bar z)
		\\
		&=
		F'_x(\bar x,\bar y,\bar u) + \alpha(E^*(E\bar x - \bar y) - \varphi'(\bar x))
		\\
		&=
		F'_x(\bar x,\bar y,\bar u) + \alpha\,E^*(y(\bar x) - \bar y)
		\\
		&=
		F'_x(\bar x,y_\alpha(\bar x),u_\alpha(\bar x)) +\alpha\,E^*(y(\bar x) - y_\alpha(\bar x)),
	\end{align*}
	see \cref{prop:smooth_value_function}.
	Noting that $F'_x$ and all appearing solution operators are continuous,
	see \cref{Prop_llp_1,prop_parametric_penalized_1},
	we conclude that $\phi_\alpha$ is even continuously Fr\'{e}chet differentiable.
	Finally, using the above formula for $\phi'_\alpha$
	and Lipschitz continuity of all appearing solution operators,
	see \cref{Prop_llp_1,prop_parametric_penalized_1} again,
	it is clear that $\phi'_\alpha$ is Lipschitz continuous
	whenever $F'_x$ possesses this property. 
\end{proof}

To close the subsection, let us mention that, based on the formula for $\phi'_\alpha$
derived in \cref{prop_parametric_penalized_2}
as well as our comments following \cref{Prop_llp_1,prop_parametric_penalized_1},
one could state a Lipschitz constant of $\phi'_\alpha$ in terms of initial problem data
if only Lipschitz constants of $F'_x$, $F'_y$, and $F'_u$ are accessible.
However, as this information cannot be exploited later on in a beneficial way,
we omit a detailed presentation which would make the paper much more technical.

\subsection{Construction of the algorithm}\label{sec:algo}

We now concentrate on the identification of stationary points of \eqref{penalized_problem}
with a focus on feasibility for \eqref{main_problem}.
Therefore, we solve a system
comprising the feasibility conditions of \eqref{penalized_problem}, the conditions from \eqref{eq:optimality-system_1},
as well as \eqref{eq:optimality_system_Xad} and \eqref{eq:optimality_system_Uad}.
Noting that \eqref{eq:optimality_system_Xad} and \eqref{eq:optimality-system_1x}
can be combined to eliminate the dual variable $\lambda^x$,
this leads to the system
\begin{align*}
	0
	&\in 
	\left\{
	F'_x(\bar{x}, \bar{y},  \bar u)
	+ 
	\alpha t\left[ f_x'(\bar{x},\bar{y},\bar{u}) - E^{*}\left( E\bar x - y(\bar x)\right) \right]
	\right\}
	+
	N_{X_\ad}(\bar x),
	\\
	0
	&=
	F'_y(\bar{x}, \bar{y},  \bar u) + A^* \lambda^p+\alpha t f'_y(\bar x,\bar y,\bar u)
	,
	\\
	0
	&=
	F'_u(\bar{x}, \bar{y},  \bar u) -B^*\lambda^p+ \lambda^u+\alpha tf'_u(\bar x,\bar y,\bar u)
	,
	\\
	0
	&=
	A\bar y-B\bar u, 
	\\
	\lambda^u&\in N_{U_\ad}(\bar u),
	\\
	t
	&\in 
	\partial\max\{\cdot,0\}(f(\bar x,\bar y,\bar u)-\varphi(\bar x)-\varepsilon).
\end{align*}
Note that the first and fifth condition implicitly require 
$\bar x\in X_\ad$ and $\bar u\in U_\ad$ by definition of the normal cone.
The unknowns in this system are
\[
	(\bar x,\bar y,\bar u,\lambda^p,\lambda^u,t)
	\in 
	\R^n\times\YY\times\UU\times\YY\times\UU\times\R.
\]

In \cref{alg:stat_pen_alg}, we suggest an iterative process which follows the ideas
drafted at the beginning of \cref{sec:penalized_subproblem}.
We automatically abort if a feasible point of \eqref{main_problem} is identified
as the approach, at its core, is a penalty method.
Hence, we keep on iterating if the current iterate is
infeasible to \eqref{main_problem},
so that $t=1$ can be used in the stationarity conditions of \eqref{penalized_problem} 
as mentioned in \cref{rem:relation_stationarity}.

\begin{algorithm}[Penalty-type solution method for \eqref{main_problem}]\leavevmode
	\label{alg:stat_pen_alg}
	\begin{algorithmic}[1]
	\Require $x_0\in X_\ad$, 
	 $\alpha_0>0$, 
	 $\gamma>1$, $\eta>0$, $\kappa\in (0,1)$
	
	\State Set $V_{-1}:=\infty$.
	
	\For{$k=0,1,\ldots$}
	
	\parState{Given $x_k$ and $\alpha_k$, find
	$(y_k,u_k,\lambda^p_k,\lambda^u_k)$ such that
	\begin{equation}\label{item:simple_subsystem}
	\begin{aligned}
	0
	&=
	F'_y(x_k,y_k,u_k)
	+ A^*\lambda^p_k
	+ \alpha_k f'_y(x_k,y_k,u_k)
	,
	\\
	0
	&=
	F'_u(x_k,y_k,u_k)
	- B^*\lambda^p_k
	+ \lambda^u_k
	+ \alpha_kf'_u(x_k,y_k,u_k)
	,
	\\
    0
    &=
    Ay_k-Bu_k
    ,
    \\
    \lambda^u_k &\in N_{U_\ad}(u_k).
	\end{aligned}
	\end{equation}
	}
	\parState{Compute $y(x_k)$, $\varphi(x_k)$, and set $V_k:=f(x_k,y_k,u_k)-\varphi(x_k)-\varepsilon$.}

	\If  {$V_k\le0$} \label{item:termination}
                       \State \Return \((x_k,y_k,u_k)\)
        \Else \label{item:update_alphak} 
               \If  {$V_k>\kappa V_{k-1}$}
         	\parState{Set $\alpha_{k+1}:=\gamma\alpha_k$.}       
           \Else  
         	\parState{Set $\alpha_{k+1}:=\alpha_k$.}
	\EndIf
          \EndIf
	\parState{Set
	\begin{align*}
	w_k
	&:=
	F'_x(x_k,y_k,u_k)
	+
	\alpha_{k+1}
	\left(
	f'_x(x_k,y_k,u_k)
	-
	E^*(Ex_k-y(x_k))
	\right),
	\\
	x_{k+1}
	&:=
	\proj_{X_\ad}
	\left(
	x_k
	-
	\eta w_k
	\right).
	\end{align*}
	}\label{item:update_xk}
	
	\EndFor
\end{algorithmic}
\end{algorithm}

In order to address the convergence analysis of \cref{alg:stat_pen_alg} properly,
additional assumptions are needed which are stated subsequently.

\begin{assumption}\label{Ass_cvgce}
\hfill
\begin{enumerate}
\item\label{item:Ass_F_lower_bounded} The function $F$ is lower bounded on $X_\ad\times S(U_\ad)\times U_{\ad}$.
\item\label{item:Ass_Lipschitzness} The partial Fr\'{e}chet derivatives $F'_x$, $F'_y$, and $F'_u$ are Lipschitz continuous.
\end{enumerate}
\end{assumption}

Before concentrating on the convergence analysis of \cref{alg:stat_pen_alg},
let us state some remarks about it.

\begin{remark}\label{rem:alg}	
	\hfill
	\begin{enumerate}
		\item In each iteration of \cref{alg:stat_pen_alg},
			the lower-level problem \eqref{Lower_level_problem} has to be solved just once
			as we do not exploit any linesearch strategies involving the optimal value function $\varphi$.
		\item\label{item:alg_well_defined}
			We note that system \eqref{item:simple_subsystem} corresponds to the (necessary and sufficient) optimality conditions of 
			the parametric optimization problem \hyperref[parametric_penalized_problem]{\textup{(PP$_{\alpha_k}(x_k)$)}},
			and can be interpreted as a square mixed-nonlinear complementarity system.
			According to \cref{prop_parametric_penalized_1}, this system possesses a uniquely determined primal-dual solution
			which depends in a Lipschitzian way on the parameter $x_k$.
			Hence, \cref{alg:stat_pen_alg} is well-defined.
		\item\label{item:alg_finite_termination} 
			In the case where \cref{alg:stat_pen_alg} terminates finitely 
			due to \hyperref[item:termination]{Line 5} in iteration $k\in\N$,
			it returns a feasible point of \eqref{main_problem} which satisfies, in a certain sense,
			approximate stationarity conditions associated with \eqref{main_problem}, see \cref{prop:NOC_IOCeps}. 
			The conditions from \eqref{item:simple_subsystem} correspond to 
			the conditions \eqref{eq:optimality_system_y} and \eqref{eq:optimality_system_u},
			with $\alpha_k$ playing the role of $\lambda^\textup{vf}$, \eqref{eq:optimality_system_Uad},
			and feasibility for \hyperref[Lower_level_problem]{\textup{(LL$(x_k)$)}}.
			Whenever $f(x_k,y_k,u_k)-\varphi(x_k)-\varepsilon<0$, 
			the associated complementarity slackness condition \eqref{eq:optimality_system_vf} is, however, violated. 
			By definition of $w_{k-1}$ and $x_{k}$ 
			in \hyperref[item:update_xk]{Line 14} in the previous iteration $k-1$
			and the fact that $N_{X_\ad}(x_k)$ is a cone, 
			we have
			\begin{align*}
				- \frac{x_k-x_{k-1}}{\eta}
				\in
				\{w_{k-1}\}
				+ 
				N_{X_\ad}(x_k).
			\end{align*}
			If $x_k$ and $x_{k-1}$ are sufficiently close together, 
			then the same holds true for $(y_k,u_k)$ and $(y_{k-1},u_{k-1})$ according to \cref{prop_parametric_penalized_1}
			whenever \cref{Ass_cvgce}\,\ref{item:Ass_Lipschitzness} is valid,
			and this then would also ensure that $w_k$ and $w_{k-1}$ are close together,
			i.e.,
			\eqref{eq:optimality_system_x} and \eqref{eq:optimality_system_Xad} would hold approximately
			in this situation,
			see \cref{Prop_llp_1} as well. 
			Let us emphasize that $x_k-x_{k-1}\to 0$ will be shown later on in \cref{lem:behavior_x}.
	\end{enumerate}
\end{remark}

According to \cref{rem:alg}\,\ref{item:alg_finite_termination},
it remains to analyze the situation when \cref{alg:stat_pen_alg}
produces an infinite sequence.
This will be done in \cref{sec:ana}.

\subsection{Convergence analysis}\label{sec:ana}

Now, we are in position to analyze the behavior of \cref{alg:stat_pen_alg}
in more detail. To start, we show that penalty parameters in \cref{alg:stat_pen_alg} remain bounded.

\begin{lemma}\label{lem:penalty_parameters_bounded}
	Let $\{(x_k,y_k,u_k)\}_{k\in\mathbb{N}}$ be the sequence generated by \cref{alg:stat_pen_alg},
 	and let $\{\alpha_k\}_{k\in\N}$ be the associated sequence of penalty parameters.
 	Then $\{\alpha_k\}_{k\in\N}$ is bounded and, thus, converges.
\end{lemma}
\begin{proof}
We argue by contradiction. 
Suppose that the sequence $\{\alpha_k\}_{k\in\N}$ is unbounded. 
As \cref{alg:stat_pen_alg} does not terminate finitely,
we know that $f(x_k,y_k,u_k)-\varphi(x_k)-\varepsilon>0$ holds for each $k\in\N$.
Fix $k\in \N$.
By construction, there are multipliers $(\lambda^p_k,\lambda^u_k)\in\YY\times\UU$
such that $(y_k,u_k,\lambda^p_k,\lambda^u_k)$ solves the system \eqref{item:simple_subsystem}. 
Hence, for every pair $(y,u)\in\YY\times\UU$ such that $Ay=Bu$ and $u\in U_{\ad}$ hold,
we obtain
\begin{equation}\label{Eq_11}
	\begin{aligned}
	&\dual{F_y'(x_k,y_k,u_k) + \alpha_k f_y'(x_k,y_k,u_k)}{y-y_k}_{\YY} \\
	&\qquad
	+\dual{F_u'(x_k,y_k,u_k) + \alpha_k f_u'(x_k,y_k,u_k)}{u-u_k}_{\UU} \\
	&\qquad
	=
	\dual{-A^*\lambda^p_k}{y-y_k}_{\YY} + \dual{B^*\lambda^p_k- \lambda^u_k}{u-u_k}_{\UU} \\
	&\qquad
	=
	\dual{\lambda^p_k}{A(y_k-y)}_{\YY} +\dual{\lambda^p_k}{B(u-u_k)}_{\YY} - \dual{\lambda^u_k}{u-u_k}_{\UU}\\
	&\qquad
	=
 	\dual{\lambda^p_k}{(Ay_k-Bu_k) - (Ay-Bu)}_{\YY} - \dual{\lambda^u_k}{u-u_k}_{\UU} \\
 	&\qquad
 	=
 	- \dual{\lambda^u_k}{u-u_k}_{\UU} \geq 0,
\end{aligned}
\end{equation}
where the last inequality follows from $\lambda^u_k \in N_{U_\ad}(u_k)$.
Furthermore, there exist $(\bar y_k,\bar u_k)\in\YY\times\UU$ such that
$A\bar y_k-B\bar u_k=0$, $\bar u_k\in U_{\ad}$, and $\varphi(x_k) = f(x_k,\bar y_k,\bar u_k)$,
see \cref{Prop_llp_1}.
Hence, the above yields
\begin{equation}\label{eq:some_bound}
	\begin{aligned}
	&
	\dual{ F_y'(x_k,y_k,u_k) + \alpha_k f_y'(x_k,y_k,u_k)}{\bar y_k-y_k}_{\YY} \\
	&\qquad
	+
	\dual{F_u'(x_k,y_k,u_k) + \alpha_k f_u'(x_k,y_k,u_k)}{\bar u_k-u_k}_{\UU}
	\geq
	0.
	\end{aligned}
\end{equation}
Note that boundedness of $X_{\ad}$ and $U_{\ad}$ as well as linearity and continuity of
$S$ imply the boundedness of $\{(x_k,y_k,u_k)\}_{k\in \N}$ and $\{(\bar y_k,\bar u_k)\}_{k\in \N}$.
Hence, according to \cref{Ass_cvgce}\,\ref{item:Ass_Lipschitzness}, we find a constant $C>0$ such that
\[
	\dual{ F_y'(x_k,y_k,u_k)}{\bar y_k-y_k}_{\YY} 
	+
	\dual{F_u'(x_k,y_k,u_k)}{\bar u_k-u_k}_{\UU}
	\leq 
	C
\]
is valid for all $k\in \N$.
Exploiting, for each $k\in \N$, the convexity of $f(x_k,\cdot,\cdot)$ and \eqref{eq:some_bound}, we find
\begin{align*}
	&f(x_k,y_k,u_k) - f(x_k,\bar y_k,\bar u_k)\\
	&\qquad
	\leq
	-
	\dual{f_y'(x_k,y_k,u_k)}{\bar y_k-y_k}_{\YY} - \dual{f_u'(x_k,y_k,u_k)}{\bar u_k-u_k}_{\UU}
	\\
	&\qquad
	\leq
	\frac1{\alpha_k}\left(
		\dual{ F_y'(x_k,y_k,u_k)}{\bar y_k-y_k}_{\YY} 
		+
		\dual{F_u'(x_k,y_k,u_k)}{\bar u_k-u_k}_{\UU}
	\right)
	\leq
	\frac{C}{\alpha_k}.
\end{align*}
Hence, for each $k\in \N$, we have
\[
	0 
	<
	f(x_k,y_k,u_k) - \varphi(x_k)-\varepsilon
	=
	f(x_k,y_k,u_k) - f(x_k,\bar y_k,\bar u_k) - \varepsilon
	\leq
	\frac{C}{\alpha_k} - \varepsilon,
\]
and taking the limit $k\to\infty$ yields a contradiction as $\alpha_k\to\infty$ and $\varepsilon>0$.
Hence, the sequence $\{\alpha_k\}_{k\in\N}$ must be bounded.
As it is also monotonically nondecreasing, it must be convergent, 
verifying the claim.
\end{proof}

Our next result addresses the behavior of the upper-level decision variables in \cref{alg:stat_pen_alg}.

\begin{lemma}\label{lem:behavior_x}
	Let $\{(x_k,y_k,u_k)\}_{k\in\mathbb{N}}$ be the sequence generated by \cref{alg:stat_pen_alg}
 	such that \cref{Ass_cvgce} holds.
 	Then, whenever the parameter $\eta>0$ in \cref{alg:stat_pen_alg} is chosen sufficiently small, 
 	we have $x_{k+1}-x_k\to 0$.
\end{lemma}
\begin{proof}
	According to \cref{lem:penalty_parameters_bounded} and the updating rule for the penalty parameter, 
	there exists $\bar k\in\N$ such that  $\alpha_k = \alpha_{\bar k}=:\bar\alpha$ holds for all $k\in\N$ such that $ k\geq \bar k$. 
	Throughout the proof, we merely consider $k\in\N$ such that $k\geq\bar k$.
	
	Recall that $\phi_{\bar\alpha}\colon\R^n\to\R$, defined in \eqref{eq:yet_another_value_function}, denotes the optimal value function
	of the parametric optimization problem \hyperref[parametric_penalized_problem]{(PP$_{\bar \alpha}(x)$)}.
	In \cref{prop_parametric_penalized_2}, we verified that $\phi_{\bar\alpha}$ is Lipschitz continuously Fr\'{e}chet differentiable,
	due to \cref{Ass_cvgce}\,\ref{item:Ass_Lipschitzness},
	and that, for each $k$, we have
	\begin{align*}
		\phi'_{\bar\alpha}(x_k) 
		&=
		F'_x(x_k,y_k,u_k) + \bar\alpha E^*(y(x_k) - y_k)
		\\
		&=
		F'_x(x_k,y_k,u_k) + \bar\alpha \bigl(f'_x(x_k,y_k,u_k) - E^*(Ex_k - y(x_k))\bigr)
		\\
		&= 
		w_k.
	\end{align*}
	Hence, \hyperref[item:update_xk]{Line 14} of \cref{alg:stat_pen_alg} yields
	\[
		x_{k+1} = \proj_{X_{\mathrm{ad}}}(x_k - \eta\, \phi'_{\bar\alpha}(x_k)).
	\]
	Using \eqref{eq:char_proj}, we particularly find
	\[
		\dual{x_{k+1} - x_k + \eta\,\phi'_{\bar\alpha}(x_k)}{x_k - x_{k+1}}\ge 0,
	\]
	and the latter yields
	\[
		\dual{ \phi'_{\bar\alpha}(x_k)}{ x_k - x_{k+1} }
		\ge
		\frac{1}{\eta}\norm{x_{k+1}-x_k}^2.
	\]
	The Lipschitzness of $\phi'_{\bar\alpha}$ guarantees the existence of a constant $L>0$ such that
	\[
		\phi_{\bar\alpha}(x_{k+1})
		\le
		\phi_{\bar\alpha}(x_k)
		+
		\dual{\phi'_{\bar\alpha}(x_k)}{x_{k+1}-x_k}
		+
		\frac{L}{2}\norm{x_{k+1}-x_k}^2.
	\]
	Combining both estimates yields
	\[
		\phi_{\bar\alpha}(x_{k+1}) 
		\leq
		\phi_{\bar\alpha}(x_k)
		-
		\left(\frac1\eta-\frac L2\right)\norm{x_{k+1}-x_k}^2.
	\]
	Hence, whenever $\eta<2/L$ is valid, $c:=1/\eta - L/2$ is a positive constant,
	and the above guarantees
	\[
		c\norm{x_{k+1}-x_k}^2 \leq \phi_{\bar\alpha}(x_k) - \phi_{\bar\alpha}(x_{k+1}).
	\]
	Summing up these inequalities for $k=\bar k,\ldots,N$, where $N\in\N$ is chosen such that $N\geq\bar k$ holds, yields
	\[
		c\sum_{k=\bar k}^N\norm{x_{k+1}-x_k}^2
		\leq
		\phi_{\bar\alpha}(x_{\bar k}) - \phi_{\bar\alpha}(x_{N+1}).
	\]
	Combining \cref{Ass_cvgce}\,\ref{item:Ass_F_lower_bounded} with the definition of $\phi_{\bar\alpha}$ in \eqref{eq:yet_another_value_function},
	we see that $\phi_{\bar\alpha}$ is lower bounded. 
	Hence, we find a constant $\beta\in\R$, not depending on $N$, such that
	\[
		\forall N\in\N\colon\quad 
		N\geq\bar k 
		\quad\Longrightarrow\quad
		c\sum_{k=\bar k}^N\norm{x_{k+1}-x_k}^2
		\leq
		\phi_{\bar\alpha}(x_{\bar k})-\beta. 
	\]
	Taking the limit $N\to\infty$ and observing that $\bar k$ is a finite integer, we end up with
	\[
		c\sum_{k=0}^\infty\norm{x_{k+1}-x_k}^2 < \infty.
	\]
	Due to $c>0$, $x_{k+1}-x_k\to 0$ follows.
\end{proof}

The proof of \cref{lem:behavior_x} indicates that $\eta<2/L$ has to hold in
\cref{alg:stat_pen_alg} in order to ensure the convergence $x_{k+1}-x_k\to 0$,
where $L>0$ is a Lipschitz constant of $\phi'_{\bar\alpha}$.
However, as $\bar\alpha>0$ is not known a priori,
this information is of limited practical use,
and it remains to identify suitable values of $\eta$, exemplary,
via a parameter study.

Now, we are in position to prove a subsequential convergence result regarding \cref{alg:stat_pen_alg}. 

\begin{theorem}\label{Theorem_convergence}
 	Let $\{(x_k,y_k,u_k)\}_{k\in\mathbb{N}}$ be the sequence generated by \cref{alg:stat_pen_alg}
 	such that \cref{Ass_cvgce} holds.
 	Then the sequence $\{(x_k, y_k, u_k)\}_{k\in\N}$  possesses a weakly convergent subsequence. 
	Moreover, whenever the parameter $\eta>0$ in \cref{alg:stat_pen_alg} is chosen sufficiently small,
	each weak accumulation point of $\{(x_k, y_k, u_k)\}_{k\in\N}$ is a stationary point of \eqref{main_problem}.
\end{theorem}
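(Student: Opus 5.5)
The plan is to use \cref{lem:penalty_parameters_bounded} to freeze the penalty parameter. By that lemma and the update rule, there are $\bar k\in\N$ and $\bar\alpha>0$ with $\alpha_k=\bar\alpha$ for all $k\geq\bar k$. We may assume the algorithm does not terminate finitely, since otherwise \cref{rem:alg}\,\ref{item:alg_finite_termination} applies. Then $V_k>0$ for all $k$, and for all $k\geq\bar k$ the primal-dual quadruple solving \eqref{item:simple_subsystem} is $(y_{\bar\alpha}(x_k),u_{\bar\alpha}(x_k),\lambda^p_{\bar\alpha}(x_k),\lambda^u_{\bar\alpha}(x_k))$ by the uniqueness in \cref{prop_parametric_penalized_1}.

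\emph{Existence of a weakly convergent subsequence.} $X_\ad$ is compact, and $U_\ad$ is bounded with $y_k=Su_k$. So $\{(x_k,y_k,u_k)\}_{k\in\N}$ is bounded in the Hilbert space $\R^n\times\YY\times\UU$ and has a weakly convergent subsequence.

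\emph{Upgrading to strong convergence along the subsequence.} Take a weak accumulation point $(\bar x,\bar y,\bar u)$ along an index set $K$. Since $x_k\in\R^n$, we have $x_k\to\bar x$ strongly along $K$, and $\bar x\in X_\ad$ by closedness. The Lipschitz continuity from \cref{prop_parametric_penalized_1}, used for $k\geq\bar k$, then gives
\[
(y_k,u_k,\lambda^p_k,\lambda^u_k)\to(y_{\bar\alpha}(\bar x),u_{\bar\alpha}(\bar x),\lambda^p_{\bar\alpha}(\bar x),\lambda^u_{\bar\alpha}(\bar x))
\]
strongly along $K$. Weak limits are unique, so $\bar y=y_{\bar\alpha}(\bar x)$ and $\bar u=u_{\bar\alpha}(\bar x)$, and the convergence is strong. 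Passing to the limit in \eqref{item:simple_subsystem} then yields \eqref{eq:optimality_system_y}, \eqref{eq:optimality_system_u} with $\lambda^\textup{vf}:=\bar\alpha$, and $A\bar y=B\bar u$. Condition \eqref{eq:optimality_system_Uad} follows from the closedness of the normal cone mapping, using also $\bar u\in U_\ad$ since $U_\ad$ is closed.

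\emph{Feasibility and complementarity.} This is the step I expect to be the crux: a penalty method must deliver feasibility of the limit without $\alpha_k\to\infty$. The key observation is that for $k>\bar k$ the parameter is not increased, so the update rule forces $0<V_k\leq\kappa V_{k-1}$. Hence $V_k\to 0$ geometrically. Now $V_k=f(x_k,y_k,u_k)-\varphi(x_k)-\varepsilon$, where $f$ is continuous and $\varphi$ is continuous by \cref{prop:smooth_value_function}. Passing to the limit along $K$ with strong convergence gives $f(\bar x,\bar y,\bar u)-\varphi(\bar x)-\varepsilon=0$. So the limit is feasible for \eqref{main_problem} with the relaxed value function constraint active. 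Therefore \eqref{eq:optimality_system_vf} holds with $\lambda^\textup{vf}=\bar\alpha>0$.

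\emph{The $x$-condition.} Choose $\eta$ small as in \cref{lem:behavior_x}, so that $x_{k+1}-x_k\to0$. By \eqref{eq:char_proj} and the cone property we have $-(x_{k+1}-x_k)/\eta-w_k\in N_{X_\ad}(x_{k+1})$. For $k\geq\bar k$, the proof of \cref{lem:behavior_x} gives $w_k=\phi'_{\bar\alpha}(x_k)$, and this is continuous by \cref{prop_parametric_penalized_2}. Along $K$ we have $x_{k+1}\to\bar x$ as well, so $w_k\to\phi'_{\bar\alpha}(\bar x)$. Closedness of $N_{X_\ad}$ then yields $\lambda^x:=-\phi'_{\bar\alpha}(\bar x)\in N_{X_\ad}(\bar x)$. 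By the formula for $\phi'_{\bar\alpha}$, this is exactly \eqref{eq:optimality_system_x} with $\lambda^\textup{vf}=\bar\alpha$, which completes the stationarity system \eqref{eq:optimality-system}.
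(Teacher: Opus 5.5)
Your proposal is correct and follows the paper's proof essentially step by step. You freeze $\alpha_k=\bar\alpha$ via \cref{lem:penalty_parameters_bounded}, upgrade weak to strong convergence through the Lipschitz solution maps of \cref{prop_parametric_penalized_1}, and obtain feasibility with an active constraint from $V_k\leq\kappa V_{k-1}$. You then recover \eqref{eq:optimality_system_x} and \eqref{eq:optimality_system_Xad} using $x_{k+1}-x_k\to 0$ from \cref{lem:behavior_x}.

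The differences are cosmetic. You pass to the limit in the normal-cone form $-(x_{k+1}-x_k)/\eta-w_k\in N_{X_\ad}(x_{k+1})$ via closedness of $N_{X_\ad}$, whereas the paper passes to the limit in the projection identity via continuity of $\proj_{X_\ad}$. You also state explicitly, via uniqueness of weak limits, that $\bar y=y_{\bar\alpha}(\bar x)$ and $\bar u=u_{\bar\alpha}(\bar x)$, which the paper leaves implicit.
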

\begin{proof}
	To start, we note that the sequences $\{x_k\}_{k\in\N}\subset X_\ad$ and $\{u_k\}_{k\in\N}\subset U_\ad$
	remain bounded due to the boundedness of $X_\ad$ and $U_\ad$.
	Observing that $y_k=Su_k$ holds for each $k\in\N$,
	linearity and continuity of $S$ imply that $\{y_k\}_{k\in\N}\subset\YY$ remains bounded as well.
	As $\R^n\times\YY\times\UU$ is a Hilbert space,
	the bounded sequence $\{(x_k,y_k,u_k)\}_{k\in\N}$ possesses a weak accumulation point.
	Similar to the proof of \cref{lem:behavior_x},
	we fix $\bar k\in\N$ such that $\alpha_k=\alpha_{\bar k}=:\bar\alpha$ holds for all $k\in\N$
	such that $k\geq\bar k$.
	Note that, by construction of \cref{alg:stat_pen_alg}, we have $0\leq V_k\leq \kappa\,V_{k-1}$ for all $k\in\N$
	such that $k\geq\bar k$, and $V_k\to 0$ follows from $\kappa\in (0,1)$.
	
	Let $(\bar x,\bar y,\bar u)\in\R^n\times\YY\times\UU$ be a weak accumulation point of
	$\{(x_k,y_k,u_k)\}_{k\in\N}$, and let $K\subset\N$ be an infinite index set
	such that $(x_k,y_k,u_k)\rightharpoonup_K(\bar x,\bar y,\bar u)$.
	Weak sequential closedness of $X_\ad$ and $U_\ad$ guarantees $\bar x\in X_\ad$ and $\bar u\in U_\ad$,
	respectively.
	Noting that $\R^n$ is finite dimensional, $x_k\to_K\bar x$ follows.
	Furthermore, let $(\lambda^p_k,\lambda^u_k)\in\YY\times\UU$ be the multipliers which
	solve the system \eqref{item:simple_subsystem} along with $(y_k,u_k)$.
	For the purpose of simplicity, we assume that $k\geq\bar k$ holds for all $k\in K$.
	Then, for each $k\in K$, we note that
	\[
		y_k = y_{\bar\alpha}(x_k),
		\quad
		u_k = u_{\bar\alpha}(x_k),
		\quad
		\lambda^p_k = \lambda^p_{\bar\alpha}(x_k),
		\quad
		\lambda^u_k = \lambda^u_{\bar\alpha}(x_k)
	\]
	holds for the Lipschitz continuous functions $y_{\bar\alpha}$, $u_{\bar\alpha}$, $\lambda^p_{\bar\alpha}$, and $\lambda^u_{\bar\alpha}$
	which have been defined in \cref{prop_parametric_penalized_1}.
	Hence, we actually know the strong convergences $y_k\to_K\bar y$, $u_k\to_K\bar u$, $\lambda^p_k\to_K\lambda^p$, and $\lambda^u_k\to_K\lambda^u$
	for some pair $(\lambda^p,\lambda^u)\in\YY\times\UU$.
	For later use, set $\lambda^\textup{vf}:=\bar\alpha$.
	
	Taking the limit $k\to_K\infty$ in \eqref{item:simple_subsystem} while keeping closedness of the normal cone mapping $N_{U_\ad}(\cdot)$
	in mind, we find conditions \eqref{eq:optimality_system_y}, \eqref{eq:optimality_system_u}, and \eqref{eq:optimality_system_Uad}
	as well as $A\bar y = B\bar u$. 
	Furthermore, for the sequence $\{w_k\}_{k\in \N}$ defined 
	via \hyperref[item:update_xk]{Line 14} of \cref{alg:stat_pen_alg}, 
	we obtain the subsequential convergence
	\[
		w_k \to_K F'_x(\bar x,\bar y,\bar u) + \lambda^\textup{vf}\bigl(f'_x(\bar x,\bar y,\bar u) - E^*(E\bar x - y(\bar x))\bigr)=:-\lambda^x,
	\]
	see \cref{Prop_llp_1} as well,
	and this yields validity of \eqref{eq:optimality_system_x}.
	Due to $x_k\to_K\bar x$, \cref{lem:behavior_x} implies $x_{k+1}\to_K\bar x$.
	Hence, \hyperref[item:update_xk]{Line 14} and the continuity of $\proj_{X_\ad}$ imply  
	\[
		\bar x = \proj_{X_\ad}(\bar x + \eta\lambda^x).
	\] 
	This, due to \eqref{eq:char_proj}, is equivalent to $\eta\lambda^x\in N_{X_\ad}(\bar x)$.
	Due to $\eta>0$ and the fact that $N_{X_\ad}(\bar x)$ is a cone,
	\eqref{eq:optimality_system_Xad} has been verified.
	Finally, $V_k\to 0$ implies $f(\bar x,\bar y,\bar u)-\varphi(\bar x)-\varepsilon=0$
	which, on the one hand, shows that $(\bar x,\bar y,\bar u)$ is feasible to \eqref{main_problem}.
	On the other hand, this condition also yields validity of \eqref{eq:optimality_system_vf}.
	
	Wrapping things up, 
	we have shown that $(\bar x,\bar y,\bar u)$ is a stationary point of \eqref{main_problem}.
\end{proof}

Let us close this section with a final remark.

\begin{remark}
	\cref{Theorem_convergence} shows that whenever \cref{alg:stat_pen_alg} produces an infinite sequence,
	then its associated weak accumulation points are stationary points of \eqref{main_problem}.
	In fact, the proof of \cref{Theorem_convergence} reveals that each weak accumulation point 
	is already an accumulation point in terms of strong convergence.
	At its core, \cref{alg:stat_pen_alg} is a penalty method induced by the penalized problem \eqref{penalized_problem}
	that computes, roughly speaking, a sequence of stationary points associated with the penalized subproblem
	for potentially increasing values of the penalty parameter.
	\cref{lem:penalty_parameters_bounded} shows that finite values of the penalty parameter are already enough for this purpose.
	To some extent, this is not surprising as the penalty function used in \eqref{penalized_problem} is nonsmooth
	while Robinson's constraint qualification is valid at each feasible point of the original problem \eqref{main_problem},
	see \cref{prop:RCQ}.
	This is in line with classical observations regarding so-called exact penalization, 
	see \cite{di1988exactness} for a finite-dimensional perspective.
\end{remark}

\section{Numerical experiments and discussion}\label{sec:experiments}

We consider the inverse optimal control problem \eqref{eq:IOC_ref} 
which has been addressed in \cite[Section~6.2]{dempe2019solving} already.

Therein, $n:=2$ and, for the domain $\Omega:=(-1,1)\times(1,1)$, 
\[
	\YY := H^1_0(\Omega),
	\quad
	\UU := L^2(\Omega),
	\quad
	\VV := L^2(\Omega).
\]
We make use of the upper-level objective function $F$ given by
\[
	F(x,y,u)
	:=
	\frac12\norm{y-y_\textup{o}}_{L^2(\Omega)}^2 + \dual{(0.1,0.3)}{x}
\]
where $y_\textup{o} := 0.2\,e_1 + 0.3\,e_2$ holds for the functions $e_1,e_2\in L^2(\Omega)$ given by
\begin{align*}
	e_1(\omega)
	&:=
	10\exp\bigl(
	-5(\omega_1-0.7)^2
	-5(\omega_2-0.3)^2
	\bigr),
	\\
	e_2(\omega)
	&:=
	10\exp\bigl(
	-10(\omega_1+0.4)^2
	-10(\omega_2-0.5)^2
	\bigr) 
\end{align*}
for all $\omega=(\omega_1,\omega_2)\in\Omega$,
see \cref{fig:desired_state}.
Furthermore, we choose
\[
	X_{\ad}:=\operatorname{conv}\{(0,0),(1,0),(0,1)\}.
\]

\begin{figure}[ht]
\centering
\includegraphics[width=0.4\textwidth]{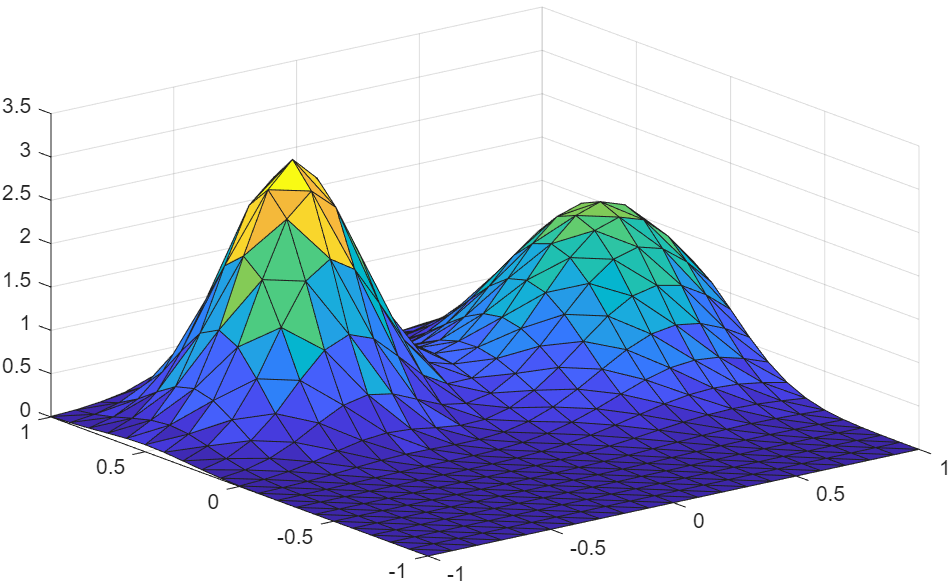}
\caption{Observed state $y_\textup{o}$.}
\label{fig:desired_state}
\end{figure}

To specify the subordinate lower-level problem \eqref{Lower_level_problem},
we set $Ex := x_1\,e_1 + x_2\,e_2$, $Ay:=-\Delta y$, and the role of $B$
is played by the natural embedding $L^2(\Omega)\hookrightarrow H^{-1}(\Omega)$.
Additionally, we exploit
\[
	U_\ad := \{u\in L^2(\Omega)\,|\,u_a \leq u \leq u_b\text{ a.e.\ on }\Omega\},
\]
where the control bounds $u_a,u_b\in L^2(\Omega)$ are constructed as described in \cite{dempe2019solving}.
The lower-level control regularization parameter $\sigma>0$ will be varied in our experiments,
where we will take on \eqref{main_problem} for varying values of the relaxation parameter $\varepsilon>0$.

Let us note that \cref{ass:setting} obviously holds for this particular instance of \eqref{eq:IOC_ref}.
\cref{Ass_cvgce}\,\ref{item:Ass_F_lower_bounded} is valid by definition of $F$ and the compactness of $X_\ad$,
and the quadratic structure of $F$ also implies that \cref{Ass_cvgce}\,\ref{item:Ass_Lipschitzness} is satisfied.

\subsection{On the discretized version of the algorithm}\label{sec:alg_discrete}

At the heart of \cref{alg:stat_pen_alg} is the numerical solution of the system \eqref{item:simple_subsystem}.
Suppressing the iteration index $k$, this system takes the particular form
\begin{equation}\label{eq:simple_system_ex}
	\begin{aligned}
	0
	&=
	y-y_\textup{o} + \alpha(y-Ex) - \Delta\lambda^p,
	\\
	0
	&=
	\alpha\sigma u - \lambda^p + \lambda^u,
	\\
	0
	&=
	-\Delta y - u,
	\\
	\lambda^u & \in N_{U_\ad}(u)
	\end{aligned}
\end{equation}
in the present situation,
and this is a square mixed-linear complementarity system.
The special structure of $U_\ad$ allows us to restate the last condition by means of $u\in U_\ad$ and
\[
	\forall\omega\in\Omega\colon\quad
	\lambda^u(\omega)
	\begin{cases}
		\leq 0 	&	u(\omega) = u_a(\omega),
		\\
		= 0		&	u_a(\omega) < u(\omega) < u_b(\omega),
		\\
		\geq 0	&	u(\omega) = u_b(\omega).
	\end{cases}
\]
Hence, introducing the measurable sets
\[
	\AA := \{\omega\in\Omega\,|\,u(\omega)\in\{u_a(\omega),u_b(\omega)\}\},
	\qquad
	\II := \Omega\setminus\AA,
\]
it essentially remains to determine $u$ on $\II$, the so-called inactive set, and $\lambda^u$ on $\AA$, the so-called active set.

We now discretize the above system \eqref{eq:simple_system_ex} with the aid of piecewise linear finite elements for $y$, $u$, $\lambda^p$, and $\lambda^u$
as suggested in \cite{dempe2019solving}. In order to ensure that the above characterization of the normal cone condition
transfers to the discretized setting, mass lumping for the control variable $u$ is exploited.

For simplicity of notation, the associated weight vectors are represented by
$y$, $u$, $\lambda^p$, and $\lambda^u$ again. 
Let $K$ and $M$ be the associated stiffness and (lumped) mass matrices, respectively,
and let $b(x,\alpha)$ be the load vector associated with $y_\textup{o} + \alpha Ex$.
Then the first three conditions in \eqref{eq:simple_system_ex} translate into
\begin{equation}\label{eq:discretized_system}
	\begin{aligned}
	b(x,\alpha)
	&=
	(1+\alpha)M y + K\lambda^p,
	\\
	0
	&=
	\alpha\sigma M u - M\lambda^p + M\lambda^u,
	\\
	0
	&=
	Ky - Mu.
	\end{aligned}
\end{equation}
Again, with a slight abuse of notation, we exploit $\mathcal A$ and $\mathcal I$ for the active and inactive index sets of the
discretized control as well as the active set decomposition
\[
	u = \begin{bmatrix} u_{\II} \\ u_{\AA} \end{bmatrix},
	\qquad
	\lambda^u = \begin{bmatrix} 0 \\ \lambda^u_{\AA} \end{bmatrix},
	\qquad
	M = \begin{bmatrix} M_{\II\II} & M_{\II\AA} \\ M_{\AA\II} & M_{\AA\AA} \end{bmatrix}.
\]
Plugging this into \eqref{eq:discretized_system}, the second equation decouples into
\begin{align*}
	0
	&=
	\alpha\sigma(M_{\II\II}u_\II + M_{\II\AA}u_\AA) - (M_{\II\II}\lambda^p_\II + M_{\II\AA}\lambda^p_\AA) + M_{\II\AA}\lambda^u_\AA,
	\\
	0
	&=
	\alpha\sigma(M_{\AA\II}u_\II + M_{\AA\AA}u_\AA) - (M_{\AA\II}\lambda^p_\II + M_{\AA\AA}\lambda^p_\AA) + M_{\AA\AA}\lambda^u_\AA,
\end{align*}
and the third equation therein is the same as
\[
	0 = Ky - M_{:\II}u_\II - M_{:\AA}u_\AA,
\]
where we used
\[
	M_{:\II} := \begin{bmatrix} M_{\II\II} \\ M_{\AA\II} \end{bmatrix},
	\qquad
	M_{:\AA} := \begin{bmatrix} M_{\II\AA} \\ M_{\AA\AA} \end{bmatrix}.
\]
Hence, we end up with the system
\begin{equation}\label{eq:discretized_system_reduced}
	\begin{bmatrix}
		b(x,\alpha)
		\\
		-\alpha\sigma M_{:\AA}u_\AA
		\\
		M_{:\AA}u_\AA
	\end{bmatrix}
	=
	\begin{bmatrix}
		(1+\alpha)M	&	\mathbb O	&	K	&	\mathbb O	
		\\
		\mathbb O	&	\alpha\sigma M_{:\II} & -M	&	M_{:\AA}
		\\
		K			& 	- M_{:\II}	&	\mathbb O	&	\mathbb O
	\end{bmatrix}
	\begin{bmatrix}
		y \\ u_\II \\ \lambda^p \\ \lambda^u_\AA
	\end{bmatrix}
	,
\end{equation}
where $\mathbb O$ is the all-zero matrix of appropriate dimensions.
Exploiting that $M$ is lumped while $\AA$ and $\II$ are disjoint,
one can use standard arguments in order to verify that the system matrix in \eqref{eq:discretized_system_reduced} is invertible.

Thus, solving \eqref{eq:simple_system_ex} reduces to the identification of $\AA$ and the numerical solution
of the linear system of equations \eqref{eq:discretized_system_reduced}.
According to \eqref{eq:char_proj}, we know that $\lambda^u\in N_{U_\ad}(u)$ is equivalent to
\[
	u = \proj_{U_\ad}(u + \vartheta\lambda^u)
\]
for each $\vartheta>0$.
Hence, to guess $\mathcal A$ in numerical practice in iteration $k\in\N_0$, we set $v_k := u_{k-1} + \vartheta \lambda^u_{k-1}$
and use
\[
	\mathcal A := \{\omega\in\Omega\,|\,v_k(\omega)\leq u_a(\omega) + \tau\} \cup \{\omega\in\Omega\,|\,v_k(\omega)\geq u_b(\omega) - \tau\}
\]
for yet another parameter $\tau>0$. 
In iteration $k=0$, we make use of $u_{-1}:=u_b$ and $\lambda^u_{-1}:=0$.

To close this subsection, let us note that one may also use a semismooth Newton method to solve \eqref{eq:simple_system_ex} in each iteration.
However, in our experiments, 
the elementary procedure drafted above already lead to convincing results 
which is why we relied on it.

\subsection{Results of experiments}
All numerical experiments were conducted on a Dell OptiPlex~7070 
equipped with an Intel\textsuperscript{\textregistered} Core\texttrademark~i5-9500T processor
(2.20~GHz), 16~GB RAM, running Microsoft Windows~11 Pro (64-bit).
We implemented \cref{alg:stat_pen_alg} in \texttt{MATLAB R2025b} using the parameter values
\[
	\alpha_0 := 0.005,  \quad \gamma := 1.3,\quad   \eta := 0.05,   \quad    \kappa := 0.9,
\]
and made use of the starting point $x_0:=(0,0)$.
In our experiments, we abort \cref{alg:stat_pen_alg} as soon as the termination condition 
from \hyperref[item:termination]{Line 5} is satisfied (up to machine precision).
The domain $\Omega$ is discretized by a Delaunay triangulation generated from a grid with mesh size $h=0.1$. 
The resulting mesh contains $441$ vertices and $800$ triangular elements,
and, hence, possesses the same specifications as the one exploited in \cite{dempe2019solving}
so that a fair comparison of the obtained results is possible.
Recall that piecewise linear finite elements are used to discretize all infinite-dimensional objects.
Linear systems are solved with the aid of \texttt{MATLAB}'s backslash operator for simplicity.
The parameters used for the approximation of the active set as described in \cref{sec:alg_discrete} are set to
\[
	\vartheta := 0.01,\quad \tau := 10^{-10}.
\]
Projections onto the polyhedron $X_\ad$ are computed using \texttt{MATLAB}'s \texttt{quadprog} routine.

\cref{fig:sigma_=_0.01_i,fig:sigma_=_0.01_ii} show the results produced by \cref{alg:stat_pen_alg} for $\sigma:=10^{-2}$ and  
different values of $\varepsilon$. 
In each row,
the first two plots present the state-control pair $(y_{\textup{alg}},u_{\textup{alg}})$ 
associated with \eqref{main_problem}, i.e., the final state-control pair returned by \cref{alg:stat_pen_alg}, 
while the last two plots show the state-control pair $(y_{\mathrm{ll}},u_{\mathrm{ll}})$ associated with 
\hyperref[Lower_level_problem]{(LL$(x_{\textup{alg}})$)}
for $x_{\textup{alg}}$ being the vector of the upper-level variables returned by \cref{alg:stat_pen_alg}.

\begin{figure}[ht]
\centering
\includegraphics[width=0.24\textwidth]{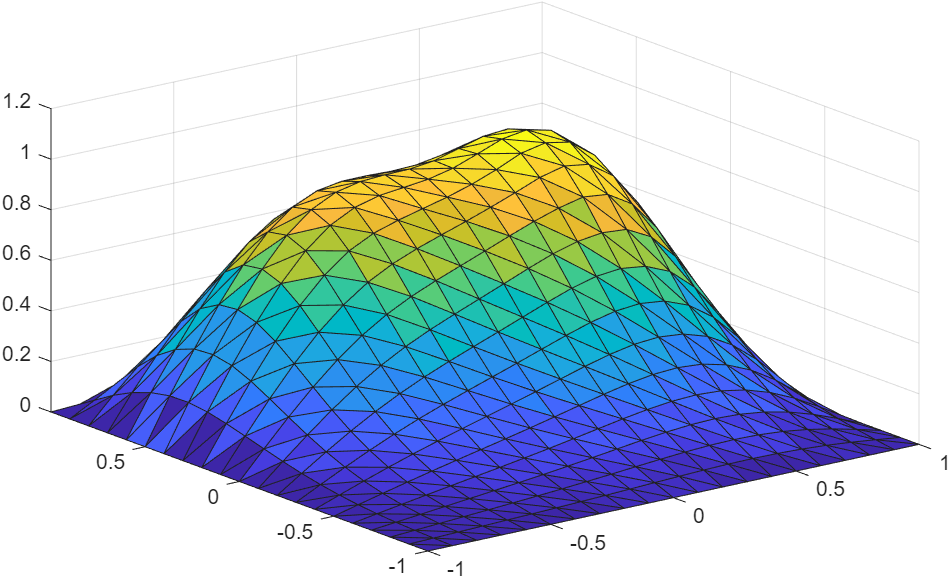}
\includegraphics[width=0.24\textwidth]{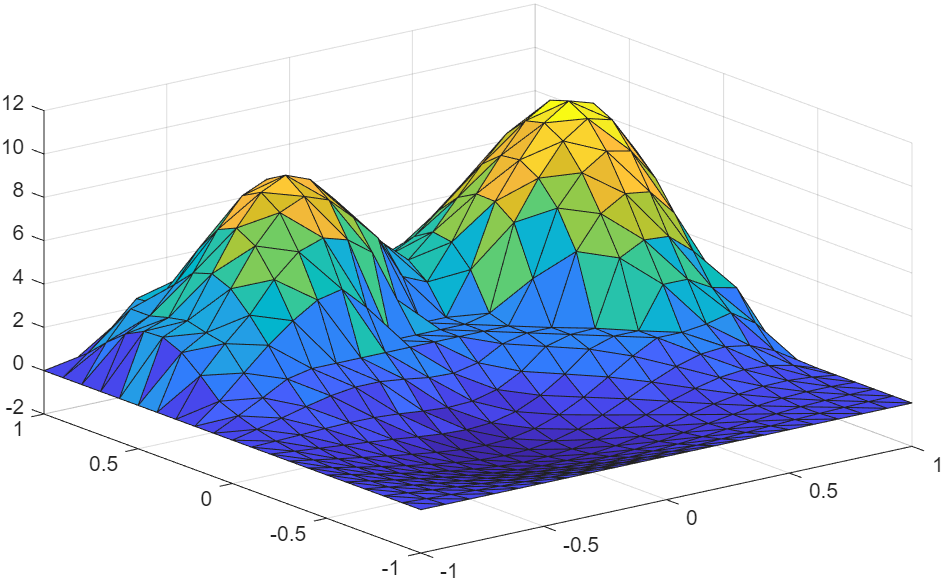}
\includegraphics[width=0.24\textwidth]{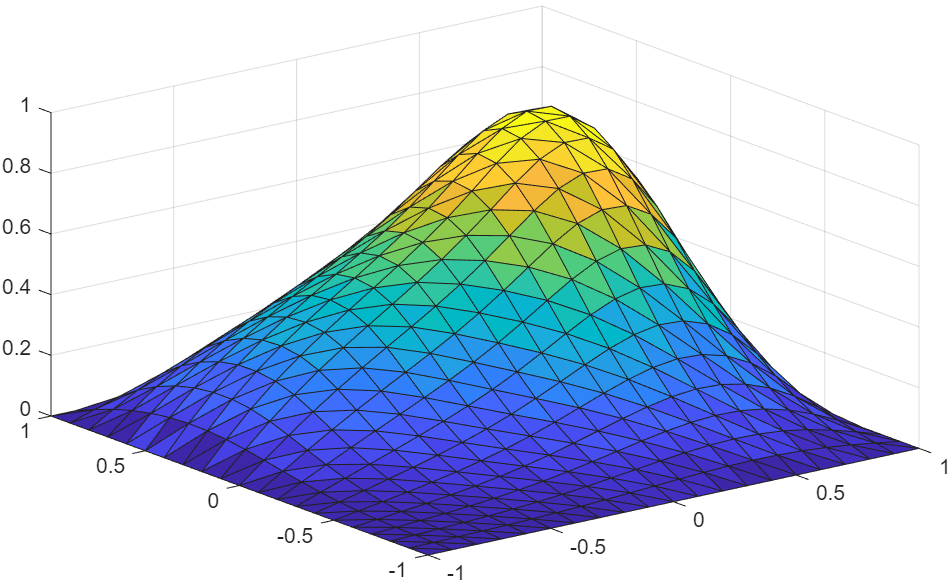}
\includegraphics[width=0.24\textwidth]{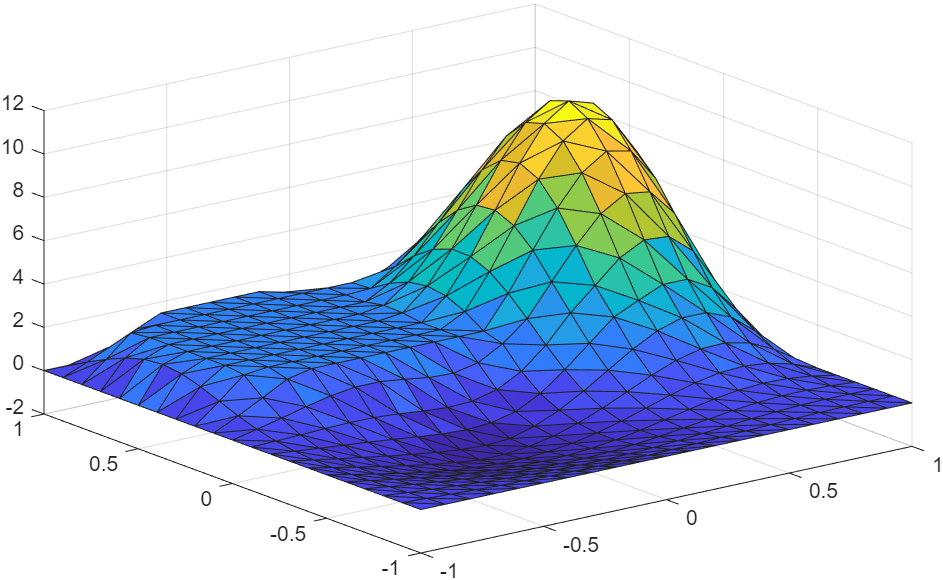}
\\
\includegraphics[width=0.24\textwidth]{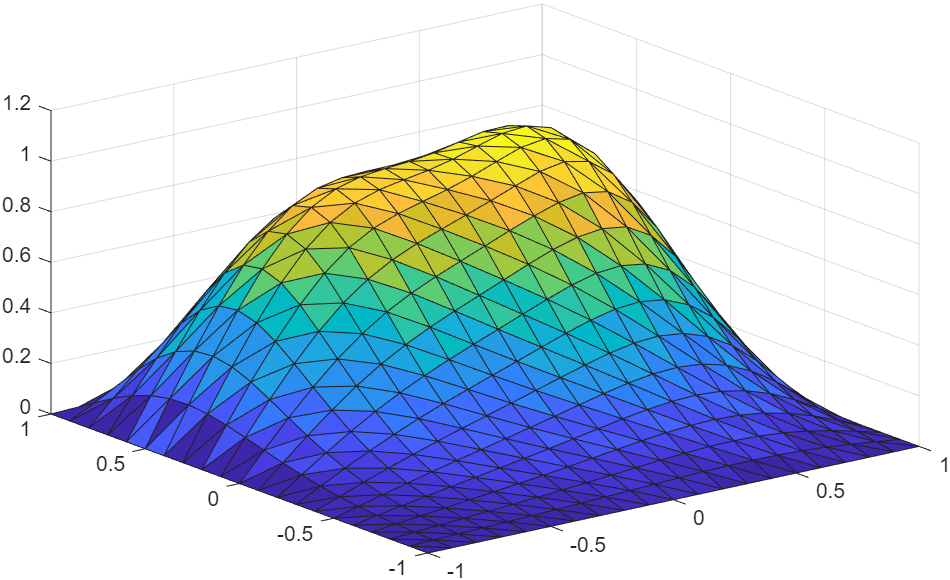}
\includegraphics[width=0.24\textwidth]{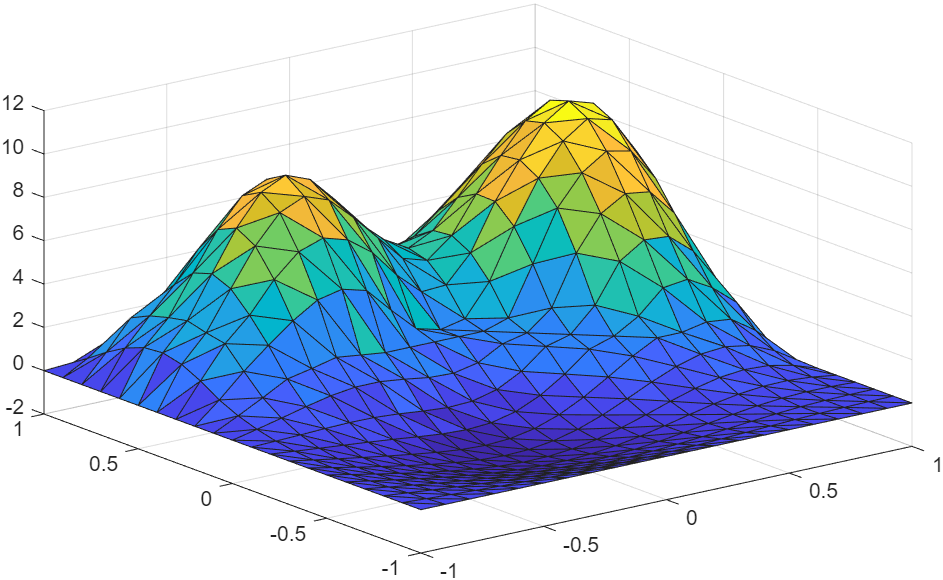}
\includegraphics[width=0.24\textwidth]{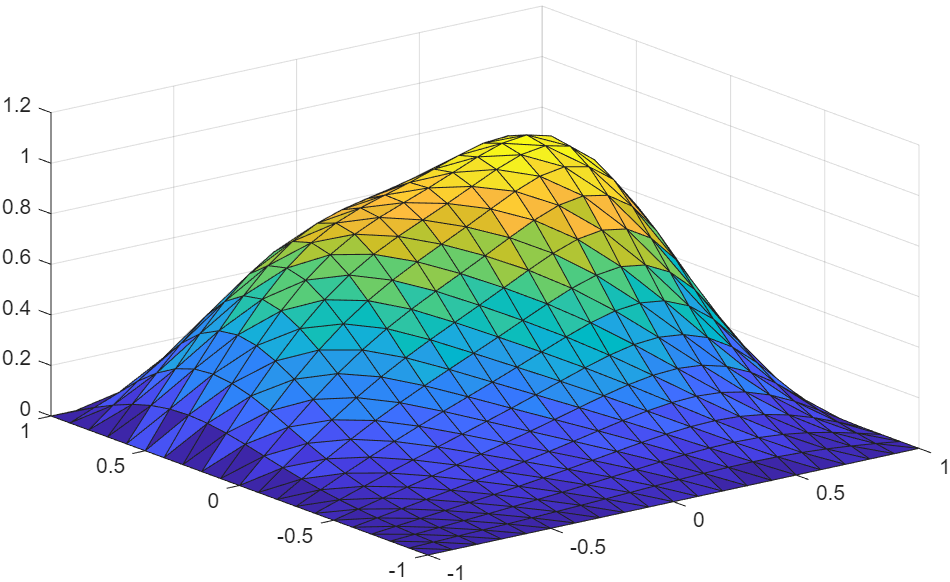}
\includegraphics[width=0.24\textwidth]{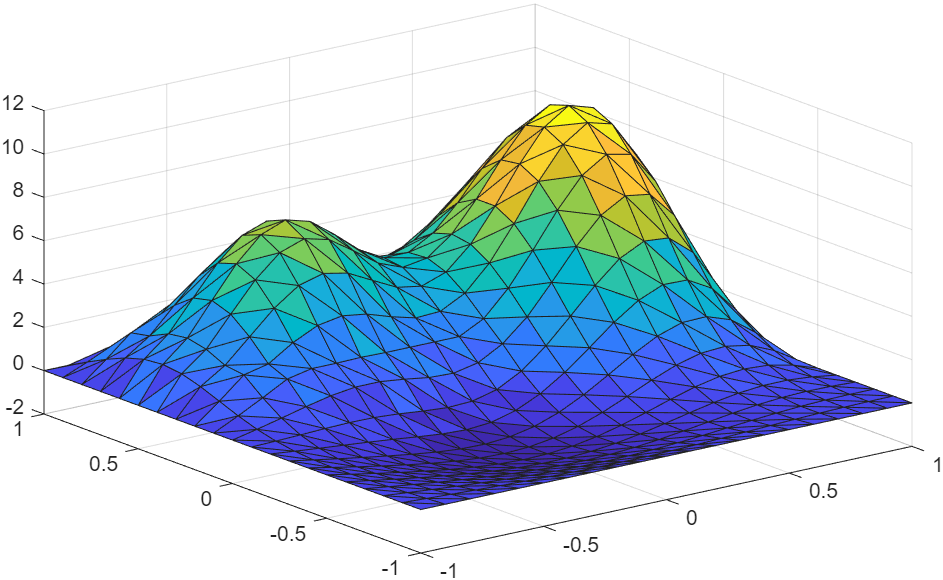}
\\
\includegraphics[width=0.24\textwidth]{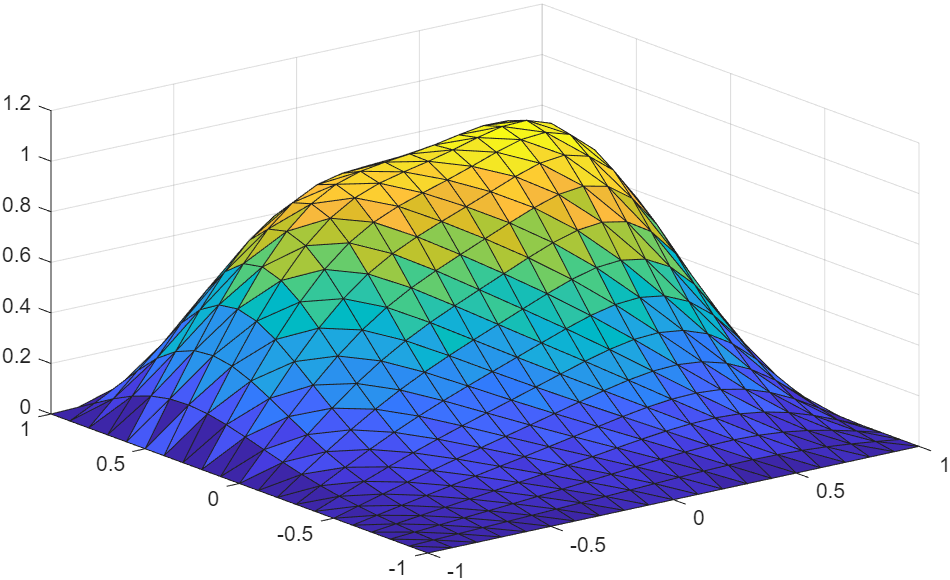}
\includegraphics[width=0.24\textwidth]{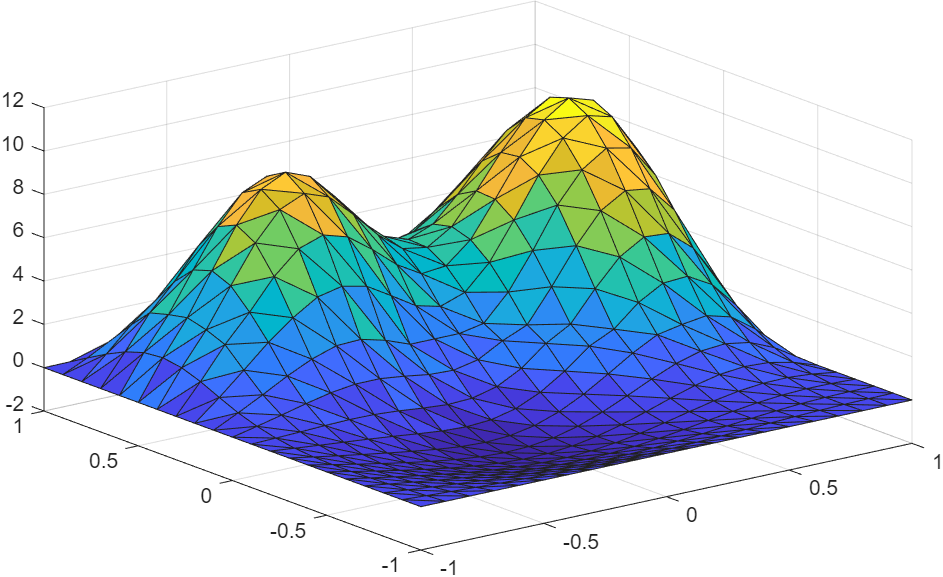}
\includegraphics[width=0.24\textwidth]{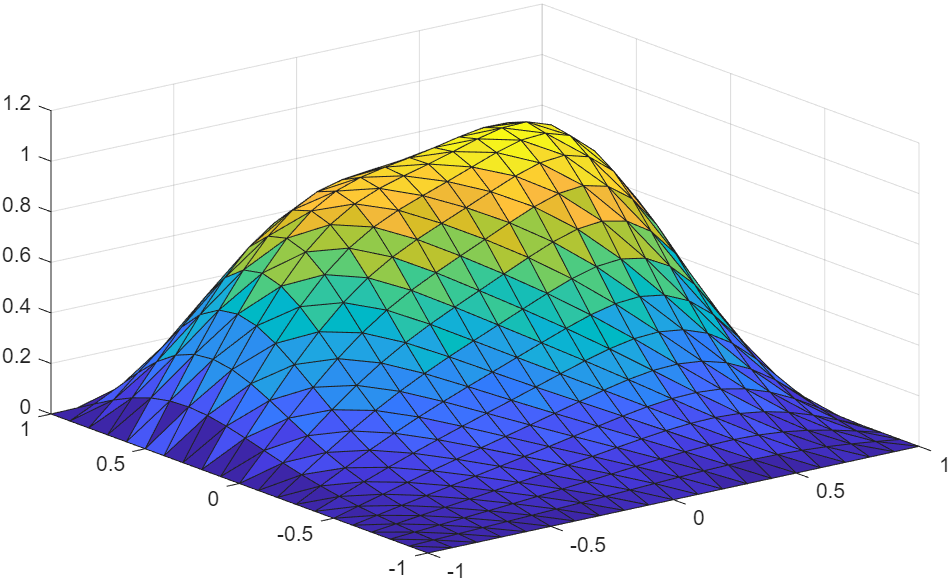}
\includegraphics[width=0.24\textwidth]{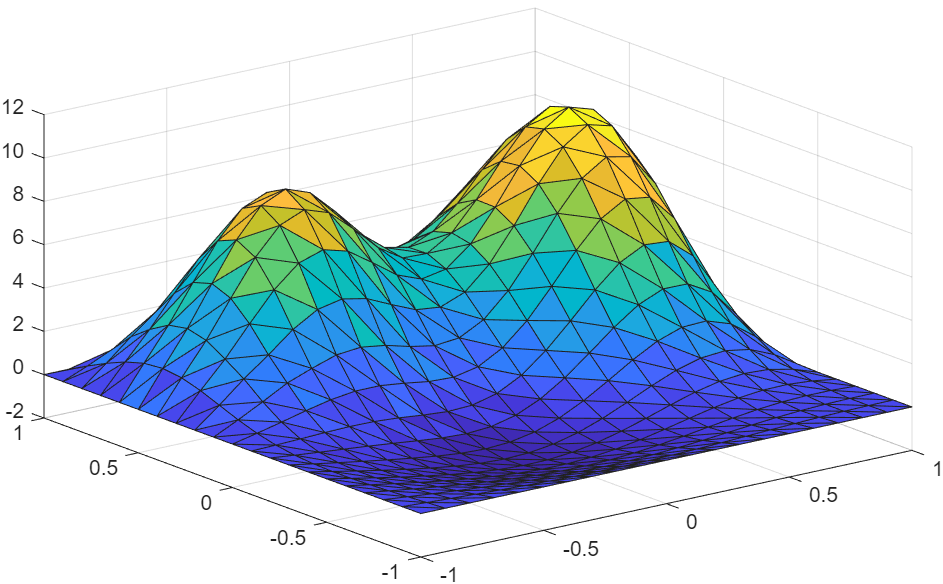}
\\
\includegraphics[width=0.24\textwidth]{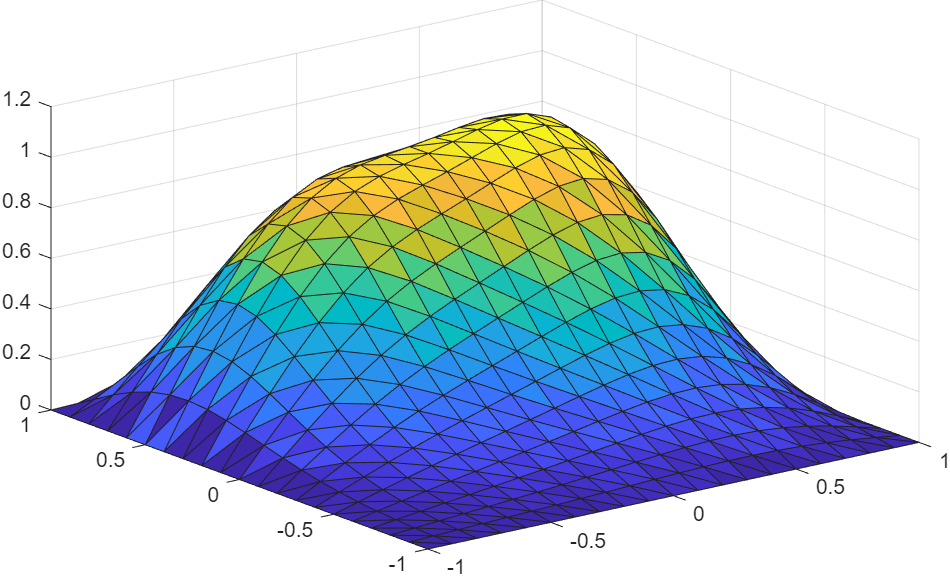}
\includegraphics[width=0.24\textwidth]{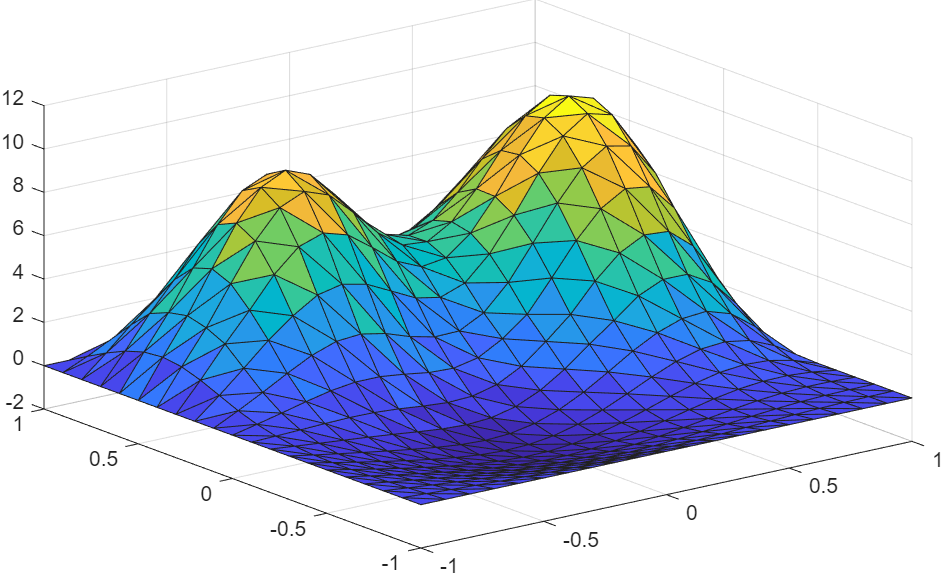}
\includegraphics[width=0.24\textwidth]{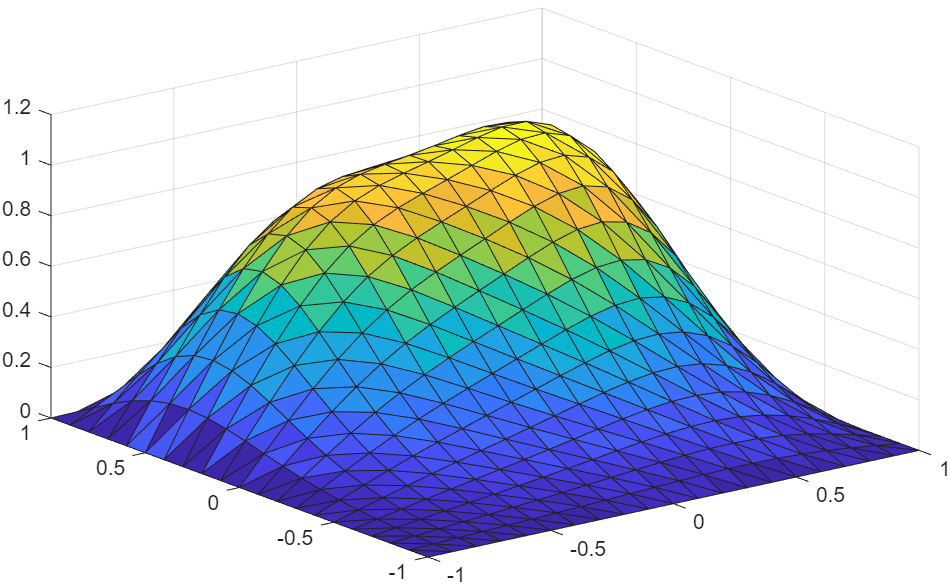}
\includegraphics[width=0.24\textwidth]{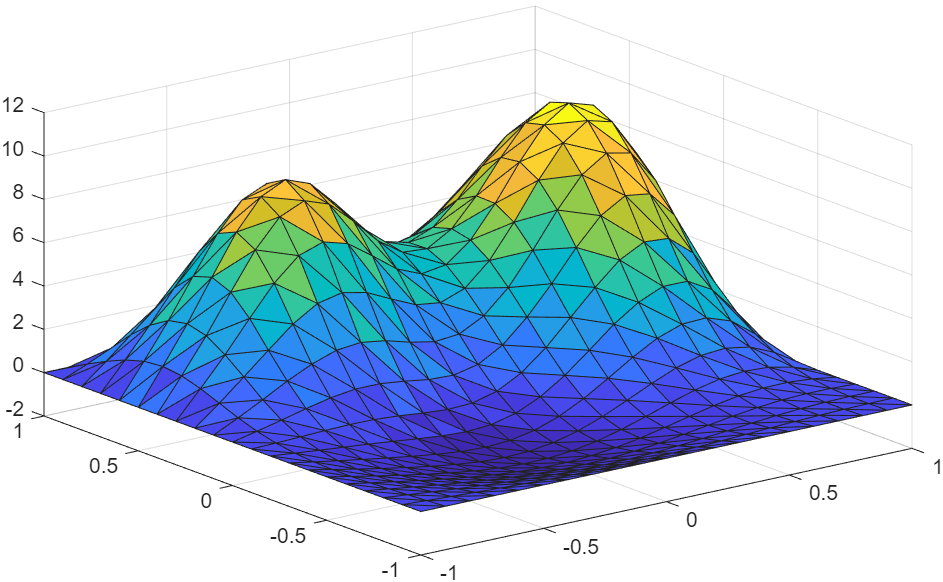}
\caption{State-control pairs for $\sigma := 10^{-2}$ and 
	$\varepsilon := 10^{-1},\ldots,10^{-4}$
	(from top to bottom);
	the first two columns show $(y_\textup{alg},u_\textup{alg})$,
	the last two columns show $(y_\textup{ll},u_\textup{ll})$.}
\label{fig:sigma_=_0.01_i}
\end{figure}

\begin{figure}[ht]
\centering
\includegraphics[width=0.24\textwidth]{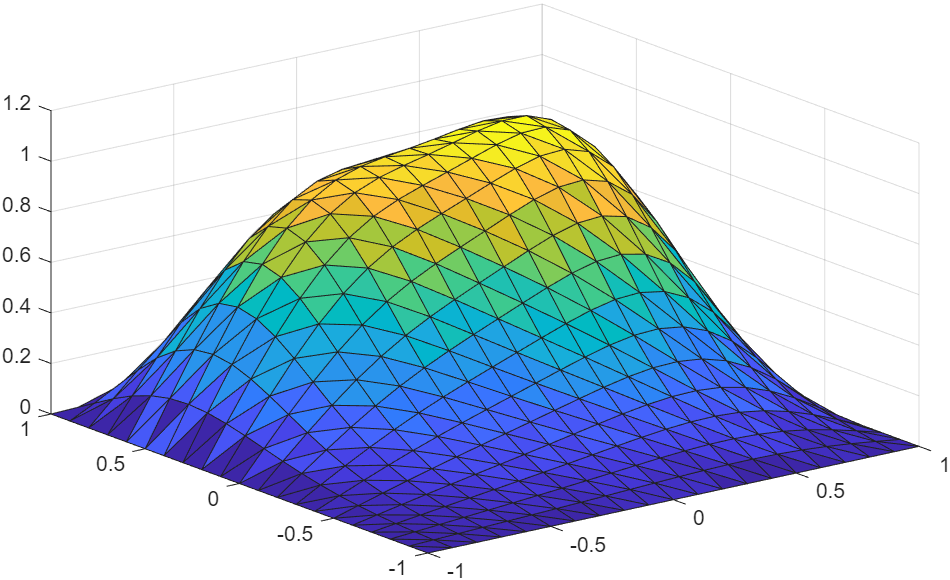}
\includegraphics[width=0.24\textwidth]{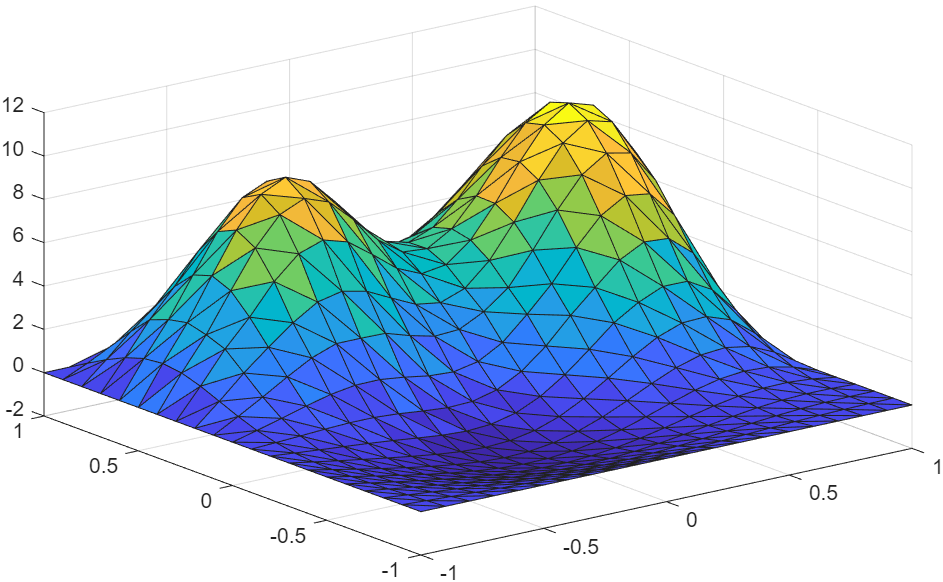}
\includegraphics[width=0.24\textwidth]{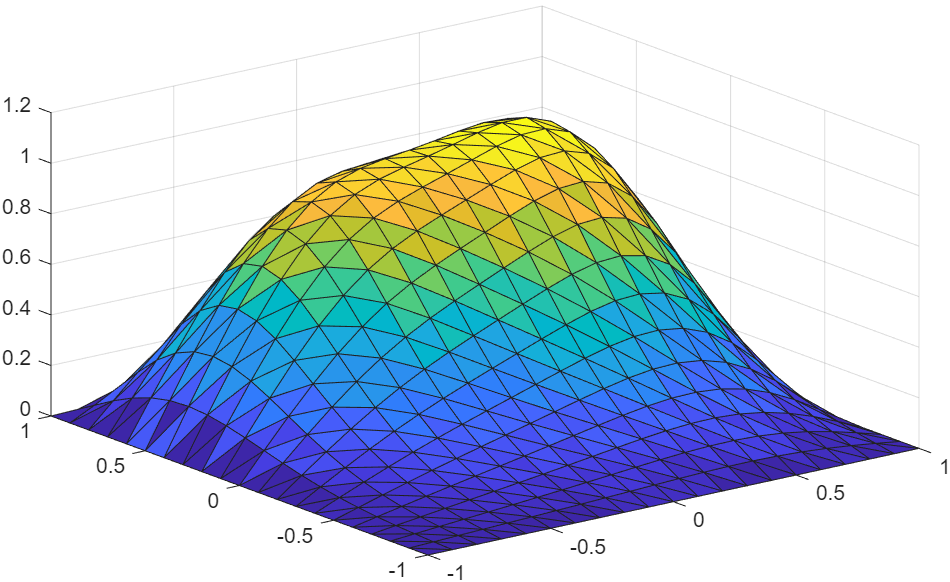}
\includegraphics[width=0.24\textwidth]{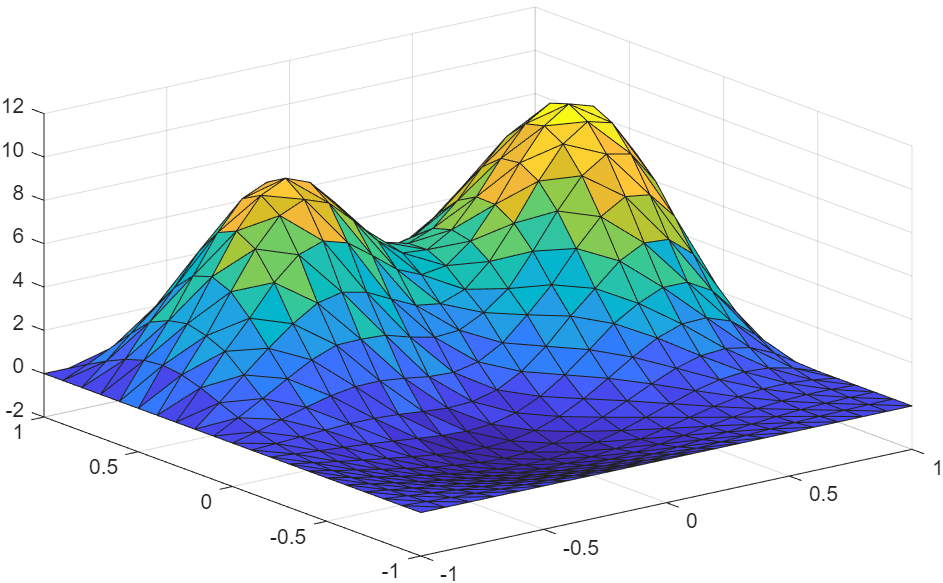}
\\
\includegraphics[width=0.24\textwidth]{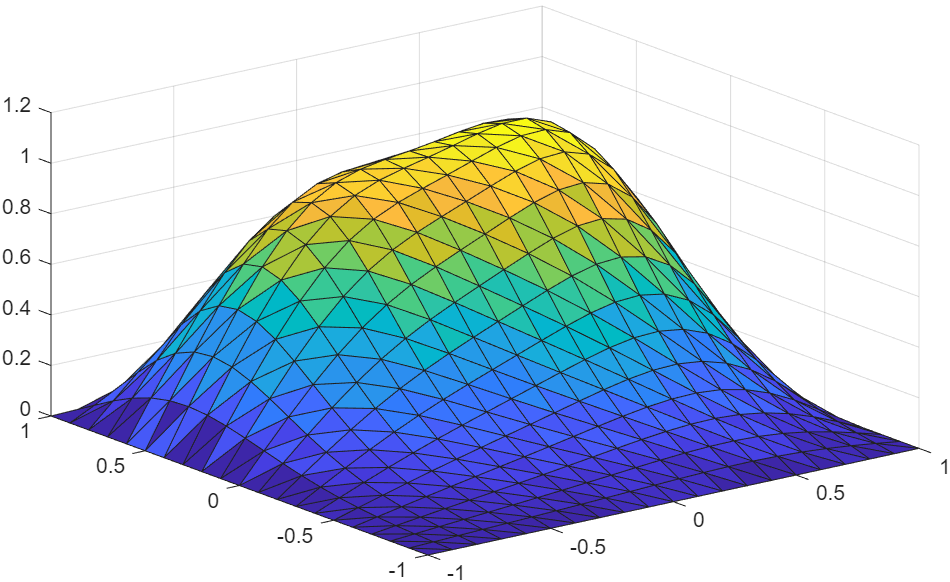}
\includegraphics[width=0.24\textwidth]{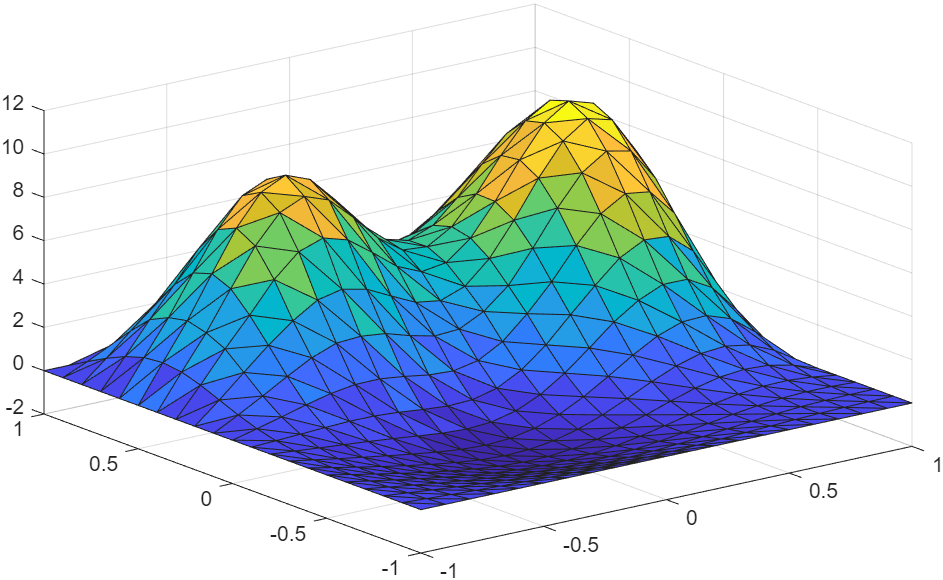}
\includegraphics[width=0.24\textwidth]{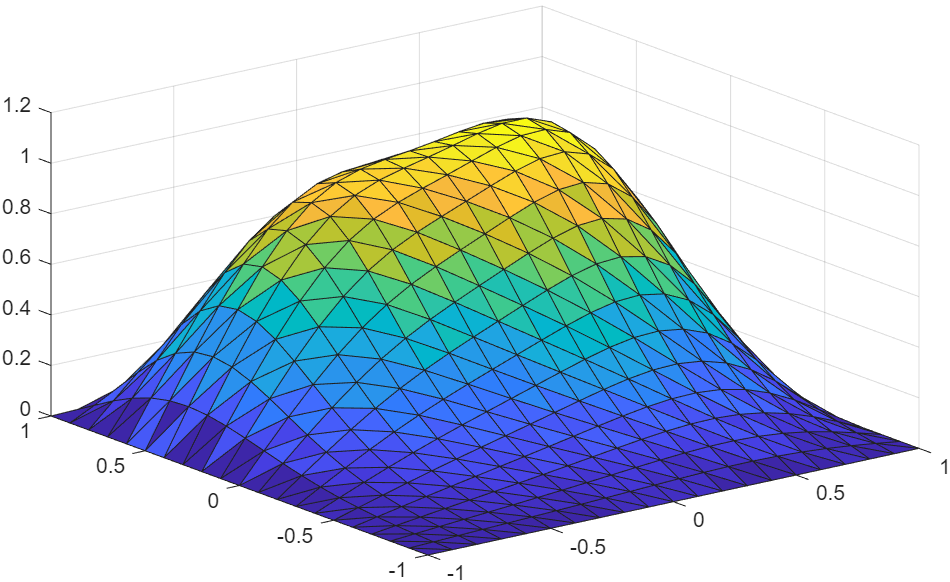}
\includegraphics[width=0.24\textwidth]{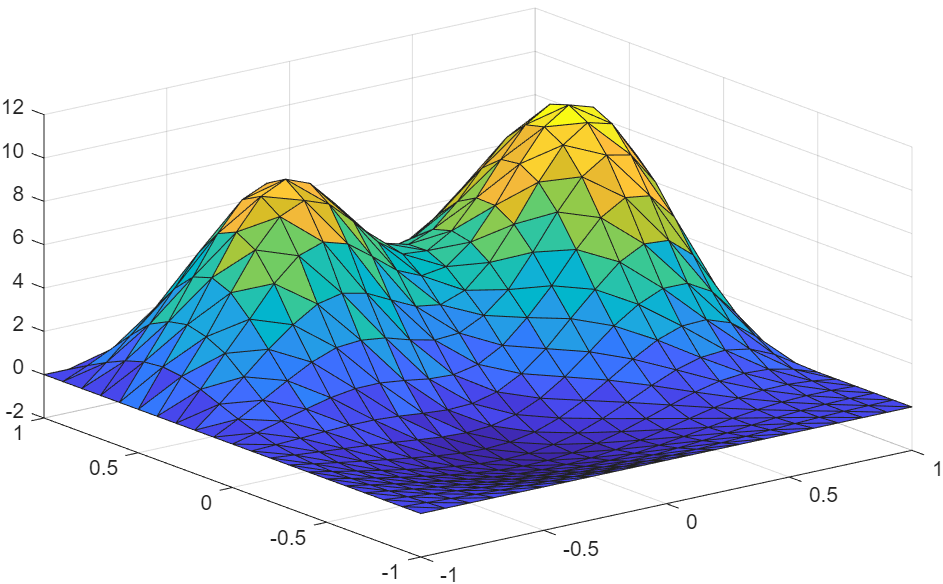}
\\
\includegraphics[width=0.24\textwidth]{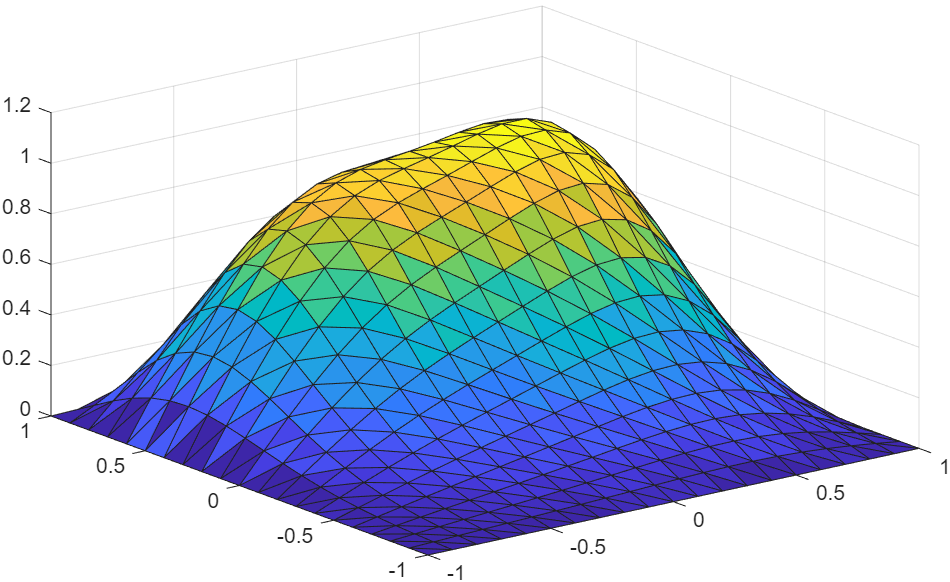}
\includegraphics[width=0.24\textwidth]{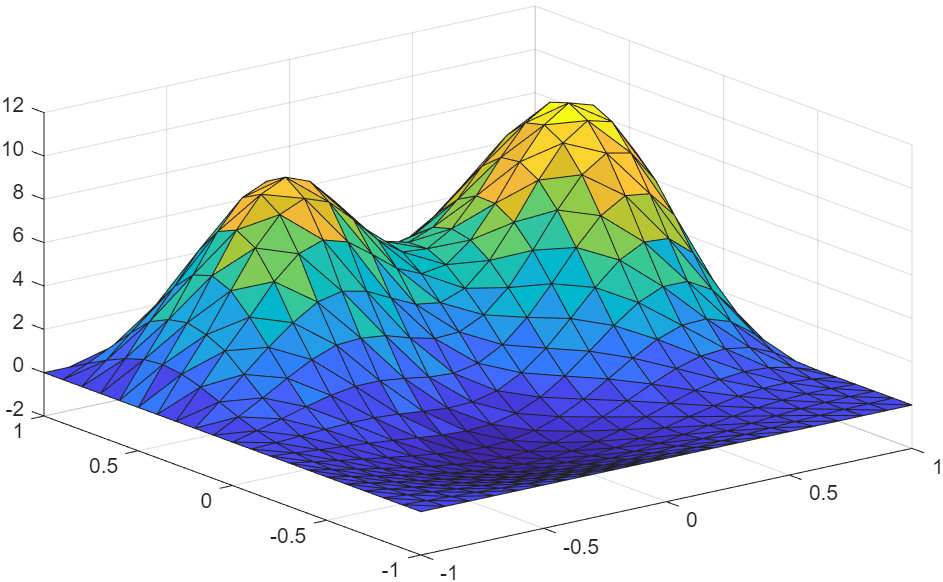}
\includegraphics[width=0.24\textwidth]{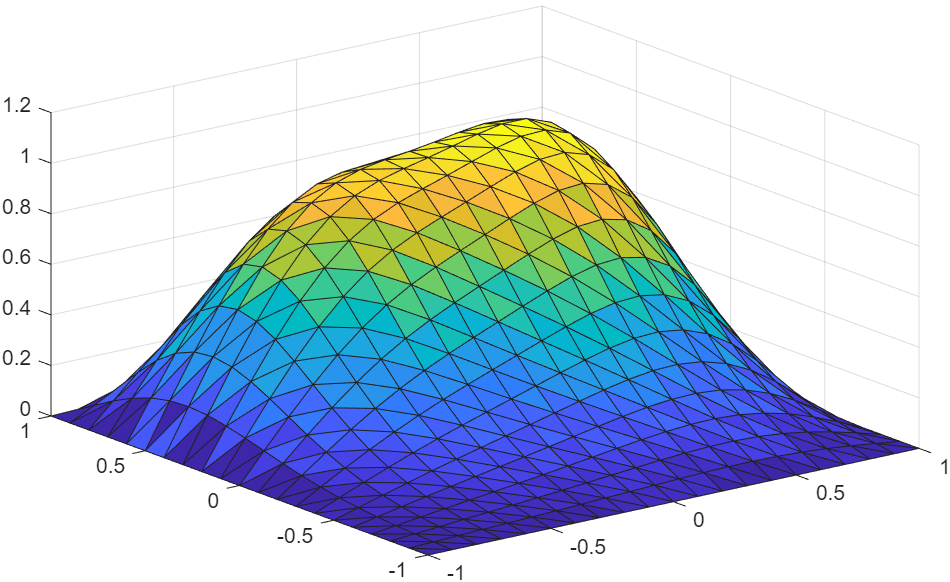}
\includegraphics[width=0.24\textwidth]{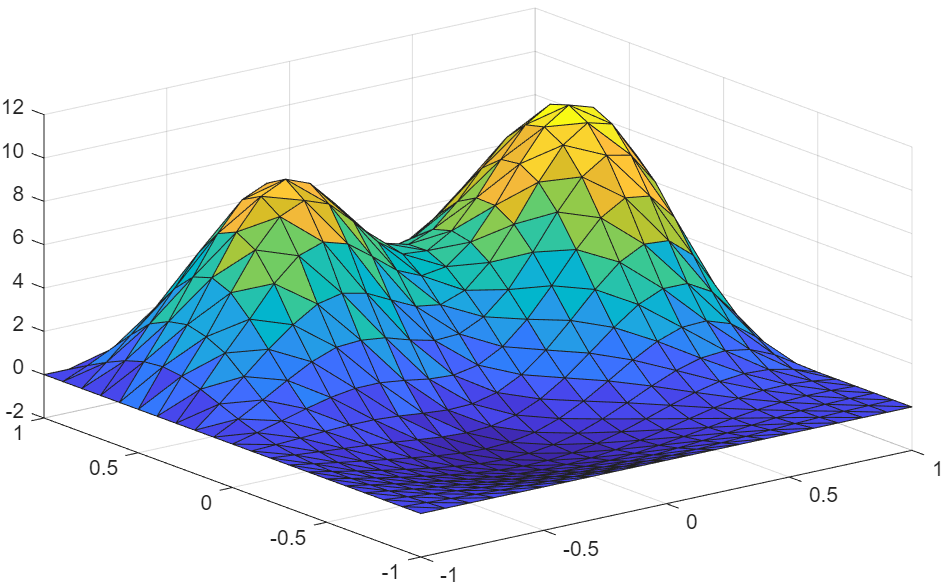}
\\
\includegraphics[width=0.24\textwidth]{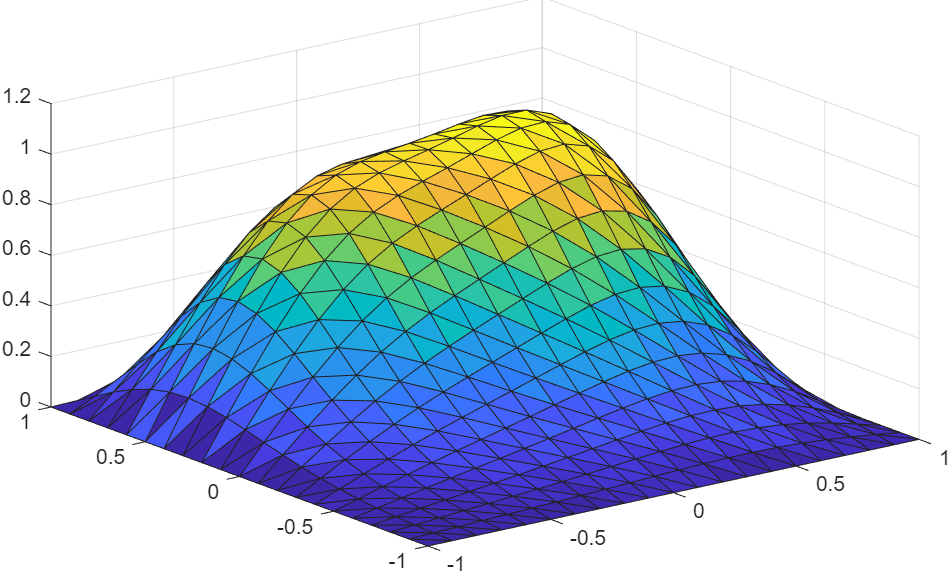}
\includegraphics[width=0.24\textwidth]{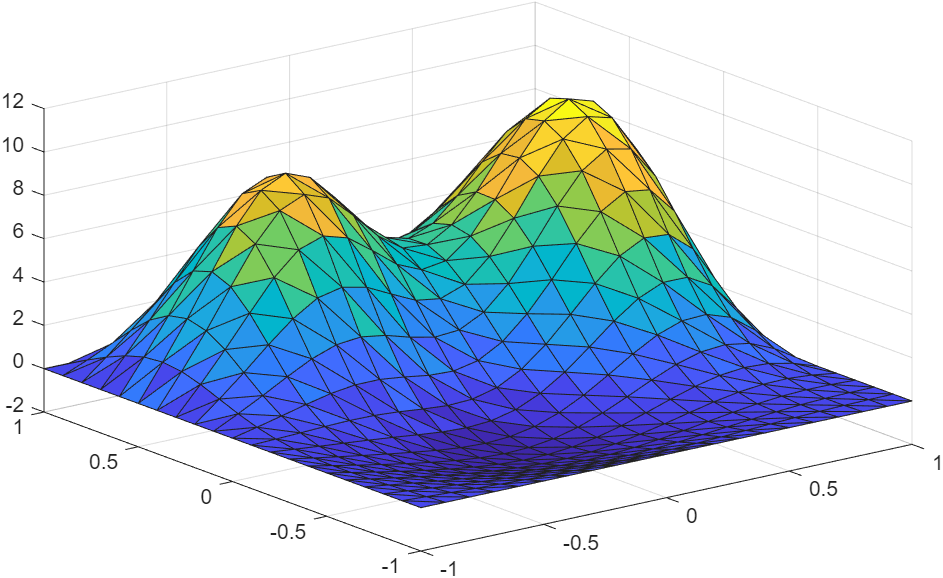}
\includegraphics[width=0.24\textwidth]{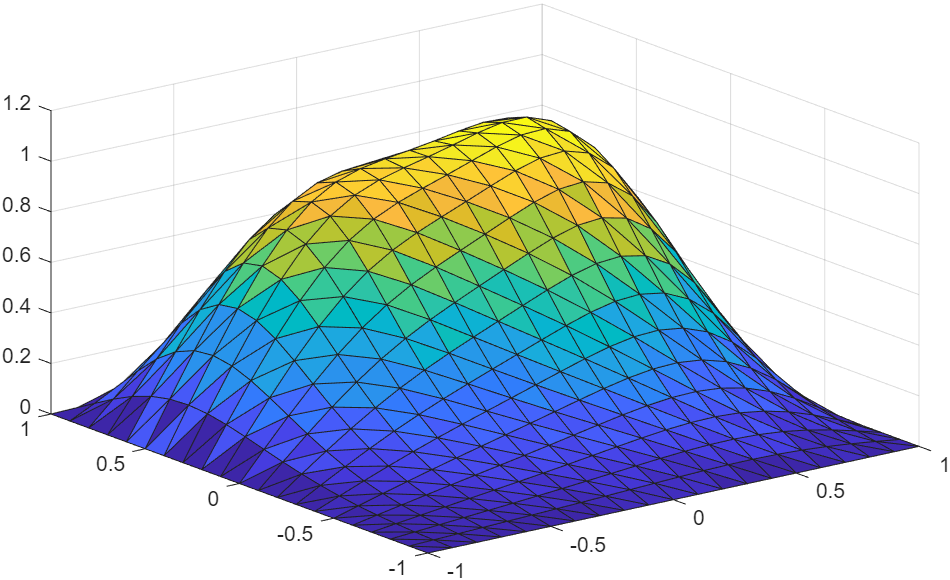}
\includegraphics[width=0.24\textwidth]{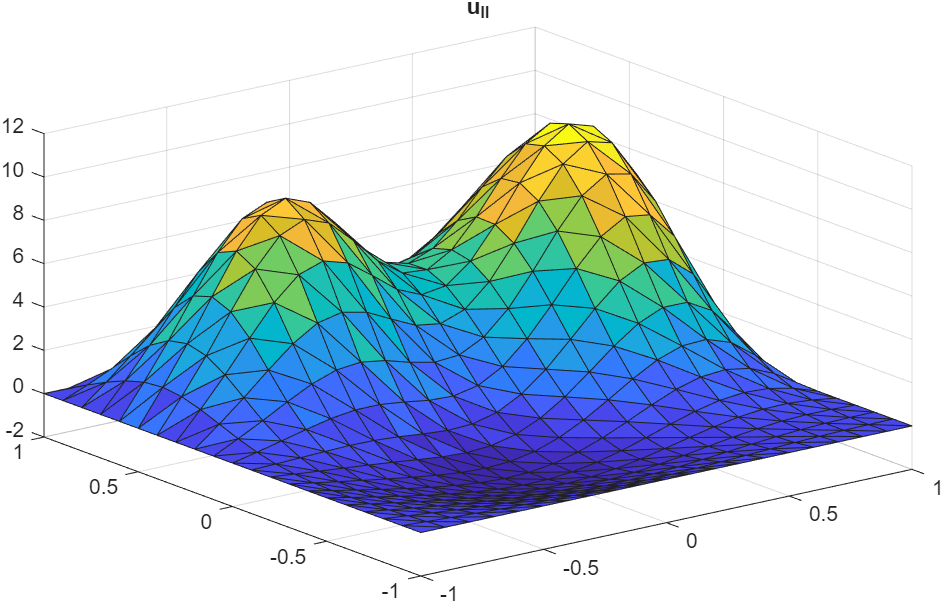}
\caption{State-control pairs for $\sigma := 10^{-2}$ and 
	$\varepsilon:=10^{-5},\ldots,10^{-8}$
	(from top to bottom);
	the first two columns show $(y_\textup{alg},u_\textup{alg})$,
	the last two columns show $(y_\textup{ll},u_\textup{ll})$.}
\label{fig:sigma_=_0.01_ii}
\end{figure}

In \cref{Table_results_1,Table_results_2}, we detail the obtained results as $\varepsilon$ decreases from  $10^{-1}$ to $10^{-8}$.
\cref{Table_results_1} documents the size of the optimality gap, 
i.e., $f(x_\textup{alg},y_\textup{alg},u_\textup{alg}) - \varphi(x_\textup{alg})$,
the violation of feasibility for \eqref{main_problem}, i.e., the shifted optimality gap $f(x_\textup{alg},y_\textup{alg},u_\textup{alg}) - \varphi(x_\textup{alg}) - \varepsilon$,
the vector $x_\textup{alg}$, the upper-level objective value $F_\textup{alg}:=F(x_\textup{alg},y_\textup{alg},u_\textup{alg})$,
and the final value $\alpha_\textup{alg}$ of the penalty parameter.

\begin{table}[H]
\centering
\begin{tabular}{cccccc}
\toprule
$\varepsilon$ & Optimality gap & Violation &  $x_\textup{alg}$ & $F_{\text{alg}}$ & $\alpha_{\text{alg}}$ \\ 
\midrule
1.0e-01   & 9.94e-02 & -6.40e-04 & $(0.294, 0.092)$ &  0.5704  & 0.4325   \\
1.0e-02   & 9.75e-03 & -2.50e-04 & $(0.292, 0.237)$ & 0.6130 & 1.2353   \\ 
1.0e-03   & 5.88e-04 & -4.12e-04 & $(0.297, 0.286)$  & 0.6288 & 4.5867   \\ 
1.0e-04   & 6.26e-05 & -3.74e-05 & $(0.302, 0.297)$ & 0.6330 & 10.0769  \\ 
1.0e-05   & 8.33e-06 & -1.67e-06 & $(0.305, 0.310)$ & 0.6383 & 37.4148  \\ 
1.0e-06   & 9.59e-07 & -4.06e-08 & $(0.303, 0.307)$ & 0.6372 & 106.8605   \\ 
1.0e-07   & 6.24e-08 & -3.76e-08 & $(0.300, 0.308)$ & 0.6370 & 234.7726  \\ 
1.0e-08   & -6.76e-08 & -7.76e-08 & $(0.300, 0.301)$ & 0.6347 & 234.7726   \\ 
\bottomrule
\end{tabular}
\caption{Computational results for different values of $\varepsilon$.}
\label{Table_results_1}
\end{table}

One of the most important observations is the steady movement of the upper-level variables toward $(0.3,0.3)$ 
which corresponds the exact global minimizer of the inverse optimal control problem, see \cite[Section~6.2]{dempe2019solving} again. 
In fact, for large values of $\varepsilon$, the result is poor. 
However, as $\varepsilon$ tends to zero, \cref{alg:stat_pen_alg} successfully identifies the target coordinates. 

The results are satisfactory for all values of $\varepsilon$ less or equal $10^{-3}$. 
We note that, for $\varepsilon := 10^{-8}$, the optimality gap is slightly negative
(which, in theory, cannot happen as the returned pair $(y_\textup{alg},u_\textup{alg})$ is feasible to \hyperref[Lower_level_problem]{(LL$(x_{\textup{alg}})$)}).
However, its value is very close to zero, so the phenomenon can reasonably be attributed to numerical tolerances.
As $\varepsilon$ decreases, the violation of feasibility for \eqref{main_problem} also gets smaller, which indicates improved feasibility of the solution
for \eqref{eq:IOC_ref} showing that the solution is becoming more accurate and reliable for the underlying inverse optimal control problem.
Note that the upper-level objective value $F_\textup{upper}$ gradually stabilizes as $\varepsilon$ approaches $0$, 
tending to approximately $0.6347$ which is close to the global minimal value 
of the inverse optimal control problem under consideration, see \cite[Figure~1]{dempe2019solving}. 
As $\varepsilon$ decreases, the final penalty parameter $\alpha_{\textup{alg}}$ grows monotonically. 
We see that a larger penalty parameter is needed to obtain more accurate solutions.
This is reasonable as the underlying inverse optimal control problem, 
which is recovered as $\varepsilon$ tends to $0$,
is rather irregular in the sense that Robinson's constraint qualification fails at all feasible points, 
see \cref{prop:RCQ}.

\cref{Table_results_2} summarizes the computational performance and tracking accuracy obtained for different values of the parameter $\varepsilon$. 
It documents the number of iterations, the total computation time, the average time \cref{alg:stat_pen_alg} needs for one iteration,
and the magnitude of change between the last two iterates for the variables $x$, $y$, and $u$, respectively.

\begin{table}[H]
\centering
\begin{tabular}{ccccccc}
\toprule
$\varepsilon$ & Iter &  Time & Time per iter& Track $x$ & Track $y$ & Track $u$ \\ 
\midrule
1.0e-01 & 40 & 1.0e+01 & 2.6e-01 & 0.00e+00& 1.92e-02& 3.04e-01 \\ 
1.0e-02 & 51 & 1.3e+01 & 2.5e-01 & 0.00e+00& 2.72e-02& 1.30e+00 \\ 
1.0e-03 & 60 & 1.5e+01 & 2.5e-01 & 0.00e+00& 4.49e-02& 5.60e-01 \\ 
1.0e-04 & 60 & 1.5e+01 & 2.5e-01 & 0.00e+00& 1.20e-01& 1.09e+00 \\ 
1.0e-05 & 70 & 1.8e+01 & 2.5e-01 & 0.00e+00& 1.75e-01& 1.53e+00 \\ 
1.0e-06 & 78 & 2.0e+01 & 2.6e-01 & 0.00e+00& 1.25e-01& 1.00e+00 \\ 
1.0e-07 & 84 & 2.1e+01 & 2.6e-01 & 0.00e+00& 6.22e-02& 8.71e-01 \\ 
1.0e-08 & 85 & 2.2e+01 & 2.6e-01 & 0.00e+00& 5.94e-02& 6.82e-01 \\ 
\bottomrule
\end{tabular}
\caption{Computational time, iteration counts, and tracking performance for different values of $\varepsilon$.}
\label{Table_results_2}
\end{table}

Speaking about computational effort, 
the number of iterations increases significantly as $\varepsilon$ gets smaller, 
from 40 iterations for $10^{-1}$ up to 85 iterations for $10^{-8}$, 
showing that tighter tolerances lead to higher computational cost. 
This increase in iterations is, however, standard in optimization when seeking high-precision results.
As $\varepsilon$ decreases, the total computation time rises from approximately 10 seconds to 22 seconds which is explained almost entirely by the increase in iteration count from 40 to 85.
The average time per iteration remains nearly constant, indicating that the computational complexity of each iteration is essentially independent of the parameter $\varepsilon$.
The fact that Track $x$ vanishes (up to machine precision) confirms that the upper-level variables have stabilized and nearly stopped changing before the algorithm is aborted.  
In contrast, Track $y$ and Track $u$ show larger values for higher $\varepsilon$ but generally decrease and stabilize as $\varepsilon$ becomes smaller, 
reflecting improved performance.

Let us note that, in order to tackle \eqref{eq:IOC_ref} computationally,
one has to solve \eqref{main_problem} as $\varepsilon$ is driven to zero.
For this purpose,
one could use a warm-starting strategy in \cref{alg:stat_pen_alg},
initializing it with the final $x$-iterate of the previous run
while reducing $\varepsilon$ at the same time.
In our experiments, we did not use this approach as we aimed to study
the dependence of \cref{alg:stat_pen_alg} from the relaxation parameter,
so warm-starting would have falsified the results.

In \cref{fig:sigma_=_0.0001}, we illustrate the output of \cref{alg:stat_pen_alg} for $\sigma:=10^{-4}$ and $\varepsilon:=10^{-8}$.
The smaller value of the regularization parameter allows the system to adjust more freely. 
This helps the state getting closer to the observed state $y_\textup{o}$, so the error is much smaller, as can be seen from the plots,
see \cref{fig:desired_state} as well.

\begin{figure}[ht]
\centering

\includegraphics[width=0.24\textwidth]{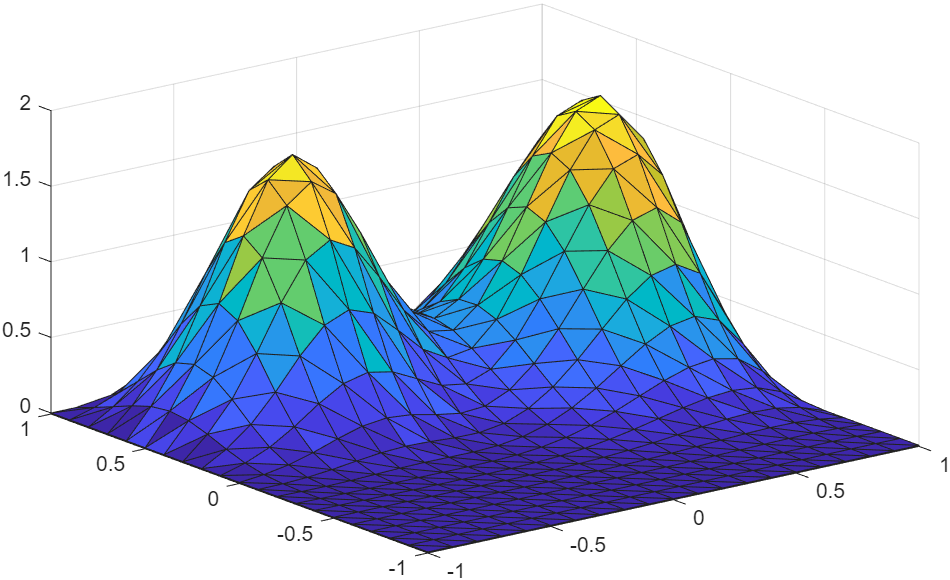}
\includegraphics[width=0.24\textwidth]{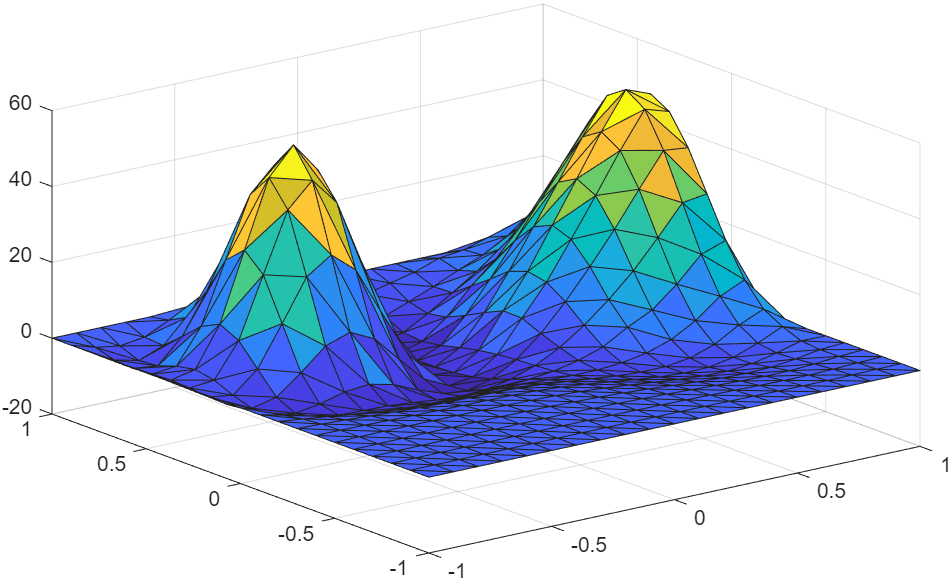}
\includegraphics[width=0.24\textwidth]{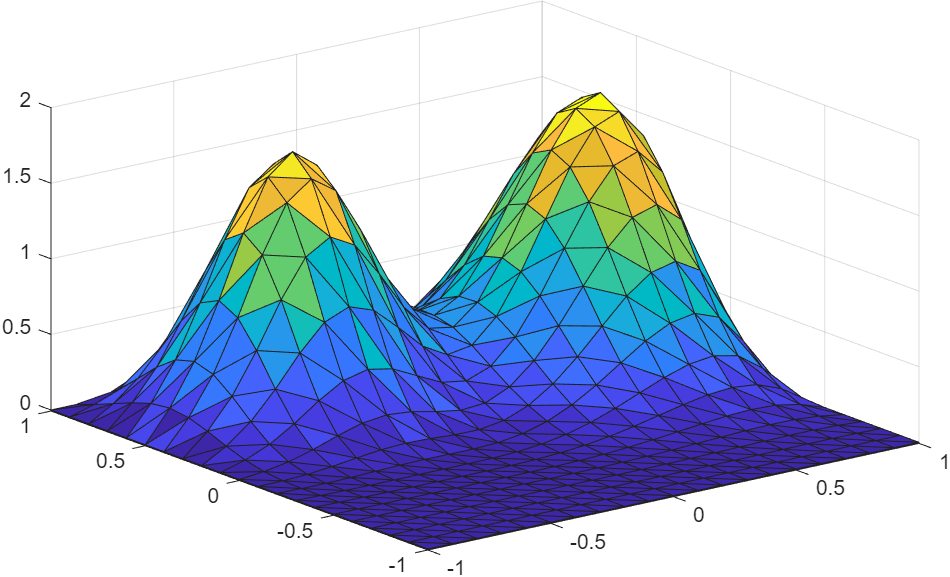}
\includegraphics[width=0.24\textwidth]{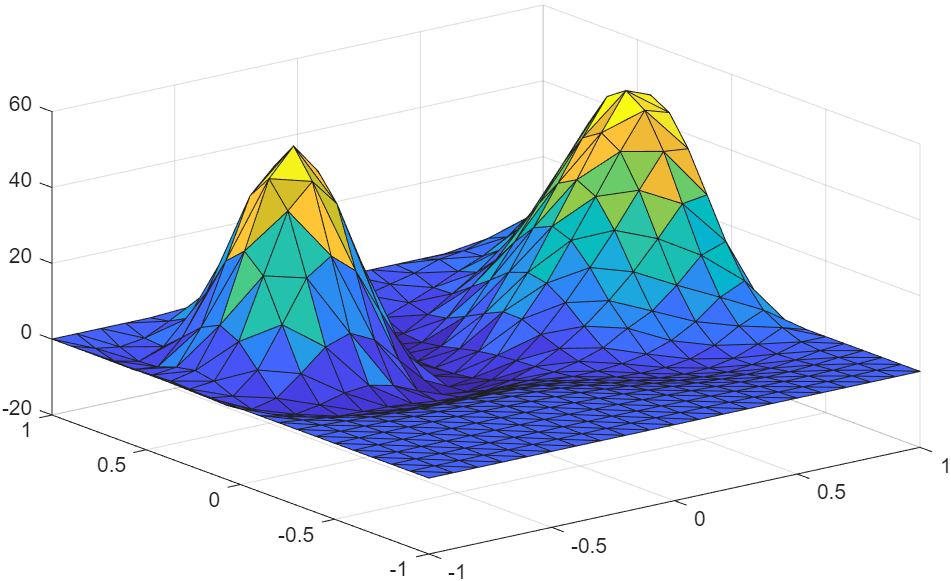}

\caption{State-control pairs for $\sigma := 10^{-4}$ and 
	$\varepsilon := 10^{-8}$;
	the first two entries show 
	$(y_\textup{alg},u_\textup{alg})$, whereas the last two show
	$(y_\textup{ll},u_\textup{ll})$.}
\label{fig:sigma_=_0.0001}
\end{figure}

\section{Conclusion}\label{sec:outro}

In this paper, we proposed a simple penalty-type solution method for the relaxed inverse optimal control problem \eqref{main_problem} and studied its convergence properties. 
The main idea of the algorithm is to identify stationary points of \eqref{main_problem} by iteratively solving the stationarity system of a penalized problem \eqref{penalized_problem}
as the penalty parameter $\alpha$ grows in some but not necessarily all iterations. 
Validity of a constraint qualification at all feasible points of \eqref{main_problem} guarantees that $\alpha$ stays bounded in this framework,
avoiding the standard difficulties associated with penalty methods. Furthermore, by making a detour via \eqref{penalized_problem}, 
we obviate to run linesearch procedures on functions or systems involving the implicitly given optimal value function of the underlying parametric optimal control problem
which would be a costly affair.
Our algorithmic approach is clearly different from the one promoted in \cite{dempe2019solving} which computes global minimizers of \eqref{eq:IOC_ref},
and could be extended to \eqref{main_problem}, as the fundamental idea behind is to exploit an iteratively refined piecewise affine upper approximation 
of the aforementioned optimal value function to obtain convex subproblems which, thus, can be solved up to global optimality.
Naturally, the global solution method from \cite{dempe2019solving} comes at high cost and can merely handle instances of \eqref{eq:IOC_ref}
where the dimension of the upper-level variables is small.
This is pretty much in contrast to our approach which comes at low cost for individual iterations as documented in \cref{sec:experiments}.

In our future research, we plan to extend the proposed solution approach to inverse optimal control problems where the upper-level decision variables are allowed to be infinite dimensional.
The analysis presented here does not cover this case,
but additional compactness assumptions on the data could be helpful to address this more general situation.
We will also study the robustness of the method in terms of inexact computations up to controllable residuals. 
Furthermore, we aim to investigate how to couple the penalty scheme with an update procedure regarding the relaxation parameter $\varepsilon$
in order to actually tackle the original inverse optimal control problem \eqref{eq:IOC_ref}. 
Recently, in \cite{kaming2025new}, it has been shown that accumulation points of a sequence of stationarity points
associated with the relaxed value function reformulation of a finite-dimensional bilevel optimization problem
as the relaxation parameter tends to zero are stationary for the original bilevel optimization problem under a so-called asymptotic constraint qualification.
Noting that asymptotic constraint qualifications can reasonably be extended to optimization problems in Banach spaces, 
see \cite{BoergensKanzowMehlitzWachsmuth2020} for a recent study, there is some reasonable hope that the results from \cite{kaming2025new}, at least partially,
can be transferred to the setting of inverse optimal control.

\bibliographystyle{habbrv}
\bibliography{Bibliography}

\appendix

\end{document}